\documentclass[a4paper,reqno,oneside,12pt]{amsart}
\usepackage{fullpage}

\usepackage{amsmath}
\usepackage{amsfonts}
\usepackage{amssymb}
\usepackage{verbatim,enumitem,graphics}
\usepackage[colorlinks]{hyperref}
\usepackage{dsfont}

\usepackage{tabls}
\usepackage{graphicx}
\usepackage[utf8]{inputenc}

\usepackage{tikz}
\tikzset{font=\large}
\usetikzlibrary{calc,math,decorations.pathreplacing,patterns.meta}

\usepackage{enumitem}
\usepackage{MnSymbol}
\usepackage{caption}

\setenumerate{leftmargin=*} % ,labelindent=\parindent}

\newtheorem{theorem}{Theorem}[section]
\newtheorem{lemma}[theorem]{Lemma}
\newtheorem{proposition}[theorem]{Proposition}

\newtheorem{conjecture}[theorem]{Conjecture}
\newtheorem{definition}[theorem]{Definition}

\newtheorem{claim}[theorem]{Claim}

\newtheorem{question}[theorem]{Question}

\newcommand{\oldqed}{}
\def\endofClaim{\hfill\scalebox{.6}{$\Box$}}

\newenvironment{claimproof}[1][Proof]{
\renewcommand{\oldqed}{\qedsymbol}
\renewcommand{\qedsymbol}{\endofClaim}
\begin{proof}[#1]
}{
\end{proof}
\renewcommand{\qedsymbol}{\oldqed}
}

% explanations above <,>,=,\le,\ge

\newcommand{\Gnp}{G(n,p)}

\newcommand{\cH}{\mathcal{H}}
\newcommand{\cM}{\mathcal{M}}

\newcommand{\cF}{\mathcal{F}}

\newcommand{\cP}{\mathcal{P}}
\newcommand{\cJ}{\mathcal{J}}

\newcommand{\cK}{\mathcal{K}}
\newcommand{\cD}{\mathcal{D}}
\newcommand{\cI}{\mathcal{I}}
\newcommand{\cT}{\mathcal{T}}

\newcommand{\PP}{\mathbb{P}}
\newcommand{\EE}{\mathbb{E}}

\newcommand{\Var}{\mathrm{Var}}

\newcommand{\eps}{\varepsilon}

\newcommand{\cB}{\mathcal{B}}

\newcommand{\hp}{\hat{p}_{\alpha}(n)}

\renewcommand{\subset}{\subseteq}

\title{The square of a Hamilton cycle in randomly perturbed graphs}

\author[J.~B\"{o}ttcher]{Julia B\"{o}ttcher$^\ast$}
\author[O.~Parczyk]{Olaf Parczyk$^{\dag}$}
\author[A.~Sgueglia]{Amedeo Sgueglia$^\natural$}
\author[J.~Skokan]{Jozef Skokan$^{\ast,\S}$}

\thanks{$^\ast$ Department of Mathematics, London School of Economics, London, WC2A 2AE, UK. \\
\textit{E-mail:} \{\texttt{j.boettcher|j.skokan}\}\texttt{@lse.ac.uk}}
\thanks{$^\dag$ 
Institute of Mathematics, Freie Universität Berlin, Arnimallee 3, 14195 Berlin, Germany. \\
OP is supported by the Deutsche Forschungsgemeinschaft (DFG, German Research Foundation) under Germany's Excellence Strategy – The Berlin Mathematics Research Center MATH+ (EXC-2046/1, project ID: 390685689).
Part of this research was conducted while OP was a visiting fellow at the London School of Economics and supported by the Deutsche Forschungsgemeinschaft (DFG, Grant PA 3513/1-1).\\
\textit{E-mail:} \texttt{parczyk@mi.fu-berlin.de}}
\thanks{$^\natural$ Department of Mathematics, University College London, London WC1E 6BT, UK. \\
This research was conducted while AS was a PhD student at the London School of Economics.\\
\textit{E-mail:} \texttt{a.sgueglia@ucl.ac.uk}}
\thanks{$^\S$ Department of Mathematics, University of Illinois at Urbana-Champaign,  Urbana, IL 61801, USA}

\begin{document}

\begin{abstract}
	We investigate the appearance of the square of a Hamilton cycle in the model of
	randomly perturbed graphs, which is, for a given $\alpha \in (0,1)$, the union
	of any $n$-vertex graph with minimum degree $\alpha n$ and the binomial random
	graph $G(n,p)$.  This is known when $\alpha > 1/2$ and we determine the exact
	perturbed threshold probability in all the remaining cases, i.e., for each
	$\alpha \le 1/2$. We demonstrate that, as $\alpha$ ranges over the interval
	$(0,1)$, the threshold performs a countably infinite number of `jumps'.  Our result has
	implications on the perturbed threshold for $2$-universality, where we also
	fully address all open cases.
\end{abstract}

\maketitle

\thispagestyle{empty}
\section{Introduction}

In extremal graph theory, the square of a Hamilton cycle often serves as a good
concrete but reasonably complex special case when results about the appearance
of a more general class of structures are still out of reach. Here, a
\emph{Hamilton cycle} is a cycle through all the vertices of a graph and the
\emph{square} of a graph~$H$ is obtained from~$H$ by adding edges between all
vertices of distance~two in~$H$.

For example, a well-known conjecture by P\'osa from the 1960s
(see~\cite{Posa:conj}) states that any $n$-vertex graph~$G$ of minimum degree at
least $2n/3$ contains the square of a Hamilton cycle. This was solved in
the 1990s for large~$n$ by Koml\'os, Sark\"ozy and
Szemer\'edi~\cite{KSSsquare}, demonstrating the power of the then new and by now
celebrated Blow-Up Lemma. Only more than 10 years later the analogous problem
was settled for a more general class of spanning subgraphs~\cite{BSTthree},
using the result for squares of Hamilton cycles as a fundamental stepping stone.

In random graphs, on the other hand, the threshold for the
containment of the square of a Hamilton cycle was determined only very recently
and, surprisingly, this proved to be much harder than the corresponding problem
for higher powers of Hamilton cycles (see also Section~\ref{sec:relatedWork}).
Here, the model of random graphs considered is the \emph{binomial random graph}
$G(n,p)$, which is a graph on~$n$ vertices in which each pair forms an edge with
probability~$p$ independently of other pairs. By a simple first moment
calculation the threshold for the appearance of a copy of the square of a Hamilton cycle is at least
$n^{-1/2}$.  Resolving a conjecture of K\"uhn and Osthus~\cite{KueOst:Posa} and
improving on results of Nenadov and \v{S}kori\'c~\cite{NenSko:Posa} and of Fischer,
\v{S}kori\'c, Steger and Truji\'c~\cite{FisSkoSteTru}, it was only recently
proved that $n^{-1/2}$ is indeed the threshold by Kahn, Narayanan and
Park~\cite{square_random}, using tools from the pioneering work of Frankston,
Kahn, Narayanan and Park~\cite{FraKahNarPar} on fractional
expectation-thresholds.

In this paper our focus is on a combination of these two themes. We consider the
question when the square of a Hamilton cycle appears in the randomly perturbed
graph model and completely settle this. The \emph{randomly perturbed graph}
$G_\alpha \cup G(n,p)$, a model introduced by Bohman, Frieze and
Martin~\cite{bohman2003}, is a graph obtained by taking a deterministic graph
$G_\alpha$ on~$n$ vertices with minimum degree at least $\alpha n$ and adding
the edges of a random graph $G(n,p)$ on the same vertex set. In this model,
which received a large amount of attention recently (see also
Section~\ref{sec:relatedWork}), one is interested in determining the behaviour
of the threshold for a given graph property in dependence of~$\alpha$. More
precisely, we say that $\hat{p}_\alpha(n)$ is the \emph{perturbed threshold} for a
property~$\cP$ and a fixed $\alpha \in [0,1)$ if there are constants $C>c>0$
such that for any $p \ge C \hat{p}_\alpha$ and for any sequence of $n$-vertex graphs
$(G_{\alpha,n})_{n\in\mathbb{N}}$ with $\delta(G_{\alpha,n}) \ge \alpha n$ we
have $\lim_{n\to\infty}\PP\big(G_{\alpha,n} \cup G(n,p) \in \cP \big)=1$, and
for any $p \le c \hat{p}_\alpha$ there exists a sequence of $n$-vertex graphs
$(G_{\alpha,n})_{n\in\mathbb{N}}$ with $\delta(G_{\alpha,n})\ge\alpha n$ such
that $\lim_{n\to\infty}\PP\big( G_{\alpha,n} \cup G(n,p)\in \cP\big) =0$.

For $\alpha=0$ the perturbed threshold is simply the usual threshold for purely
random graphs and the perturbed threshold is~$0$ for any $\alpha$ such that all
graphs with minimum degree $\alpha n$ are in $\cP$. In this sense, randomly
perturbed graphs interpolate between questions from extremal graph theory and
questions concerning random graphs. For small $\hat p_\alpha$ (and hence
relatively large $\alpha$) there are some analogies to the concept of smoothed
analysis in the theory of algorithms (see, e.g., \cite{smoothedAnalysis}), and
for small~$\alpha$ one asks how much random graph theory results are influenced
by the fact that in a random graph there may be vertices with relatively few
neighbours. But in general one would like to determine the evolution of the
perturbed threshold for the whole range of~$\alpha$, which has so far only been
achieved for very few properties.  

This paper contributes to this line of
research by determining the perturbed threshold for the containment of squares
of Hamilton cycles for $0 < \alpha \le \frac12$, which answers a question of
Antoniuk, Dudek, Reiher, Ruci\'nski and Schacht~\cite{antoniuk2020high} in a
strong form.  In the range $\alpha \in (\frac12,\frac23)$ the perturbed
threshold for squares of Hamilton cycles was determined by Dudek, Reiher,
Ruci\'nski and Schacht~\cite{dudek2020powers}, as we discuss in more detail in
Section~\ref{sec:relatedWork}.  The case $\alpha=0$, on the other hand, is the
purely random graph case addressed in~\cite{square_random} and the range
$\alpha\ge\frac23$ is the purely extremal scenario addressed in~\cite{KSSsquare}.
Therefore, our result completely settles the question of determining the
perturbed threshold for the square of a Hamilton cycle for the whole range of
$\alpha$.

\begin{theorem}[Square of a Hamilton cycle]
	\label{thm:main_small}
	The perturbed threshold $\hat{p}_\alpha(n)$ for the containment of the square of a Hamilton cycle is
	\[
	\hat{p}_\alpha(n) =  
	\begin{cases}
		0 & \text{if } \alpha\ge\frac23\,, \\
		n^{-1} & \text{if } \alpha \in [\frac{1}{2},\frac{2}{3}) \, , \\ 
		{n^{-(k-1)/(2k-3)}} & \text{if } \alpha \in (\frac{1}{k+1},\frac{1}{k}) \text{ for $k\ge 2$}\, , \\ 
		{n^{-(k-1)/(2k-3)}}(\log n)^{1/(2k-3)} & \text{if } \alpha = \tfrac{1}{k+1}  \text{ for $k\ge 2$}\, , \\
		n^{-1/2} & \text{if } \alpha=0\, . 
	\end{cases}
	\]	
\end{theorem}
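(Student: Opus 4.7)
The plan is to establish matching lower and upper bounds on $\hat{p}_\alpha(n)$ in the new regime $\alpha \in (\frac{1}{k+1}, \frac{1}{k}]$ for $k \ge 2$; the other ranges are already settled in \cite{KSSsquare}, \cite{dudek2020powers}, and \cite{square_random}. The guiding heuristic is that $n^{-(k-1)/(2k-3)}$ is precisely the expectation threshold for every vertex of $G(n,p)$ to be an endpoint of some copy of $P_k^2$, the square of the path on $k$ vertices (which has $k$ vertices and $2k-3$ edges). Both directions of the proof will revolve around realising or defeating this local extension property.

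For the lower bound at $\alpha \in (\frac{1}{k+1}, \frac{1}{k})$, I would take $G_\alpha$ to be a disjoint union of $k$ nearly-balanced cliques, with a small number of additional edges added between them so that $\delta(G_\alpha) = \lceil \alpha n \rceil$. The key structural observation is that in any hypothetical square of a Hamilton cycle in $G_\alpha \cup G(n,p)$, each vertex $v$ must lie in a copy of $P_k^2$ whose transitions between cliques are witnessed by random edges from $G(n,p)$. A first-moment calculation then shows that for $p \le c \hat{p}_\alpha$, w.h.p.\ some vertex is missing every such witnessing copy, so no square Hamilton cycle can use it. At the boundary $\alpha = \frac{1}{k+1}$ one sharpens the construction to $k+1$ nearly-balanced cliques and obtains the logarithmic correction via a Janson/Chebyshev second-moment computation, mirroring the coupon-collector threshold for covering all vertices by copies of $P_k^2$.

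For the upper bound, given $G_\alpha$ with $\delta(G_\alpha) \ge \alpha n$ and $p \ge C \hat{p}_\alpha$, I would split $G(n,p)$ into a constant number of independent copies $G_1, \dots, G_s$, each with edge probability $\Omega(\hat{p}_\alpha)$, and apply an absorption-plus-cover framework. First, use $G_1$ to show via a second-moment argument that w.h.p.\ every vertex is the endpoint of many copies of $P_k^2$. Second, in $G_\alpha \cup G_2$ build a short absorbing squared path $P_{\mathrm{abs}}$ whose absorbing gadgets are built around local $P_k^2$-based switchers. Third, use $G_\alpha \cup G_3$ to construct an almost-spanning squared path covering all but a negligible set of vertices outside $P_{\mathrm{abs}}$, alternating $P_k^2$-extensions (paid for by $2k-3$ random edges) with structural extensions that exploit the minimum degree $\alpha n > n/(k+1)$ of $G_\alpha$ essentially for free. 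Finally, connect the pieces into a Hamilton squared cycle using a reservoir carved from $G_4$ together with the absorbing property of $P_{\mathrm{abs}}$.

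The main obstacle I expect is the almost-spanning covering step: the extension procedure must produce $P_k^2$-rooted extensions at \emph{every} still-uncovered vertex consistently, not merely on average, and this is where the exponent $(k-1)/(2k-3)$ is consumed sharply. Careful control, probably via a Pippenger--Spencer-style random greedy algorithm or an appropriate hypergraph matching argument on the bipartite incidence between vertices and extending $P_k^2$'s, is needed to ensure the squared path continues to grow without getting stuck at a vertex with no available extension. Coordinating this with both the minimum degree structure of $G_\alpha$ and the later absorbing and closing steps is, I expect, the heart of the proof.
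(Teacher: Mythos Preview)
Your lower bound construction is wrong. A disjoint union of $k$ nearly-balanced cliques does \emph{not} force the random graph to supply linearly many copies of $P_k^2$: the square of a Hamilton cycle can run for arbitrarily long stretches entirely inside a single clique (where $G_\alpha$ already provides every edge of the square), and only needs random edges at the $O(k)$ transitions between cliques. So this $G_\alpha$ yields only a trivial lower bound, nowhere near $n^{-(k-1)/(2k-3)}$. Your ``key structural observation'' that every vertex must lie in a $P_k^2$ with inter-clique random edges is simply false here. The paper instead takes $G_\alpha = K_{\alpha n,(1-\alpha)n}$, the complete bipartite graph with parts $A$ and $B$. Since $B$ is independent in $G_\alpha$, any embedding of $C_n^2$ must place $\Theta(n)$ vertex-disjoint copies of $P_k^2$ entirely inside $B$ (because $|B|>(k-1)|A|$, most gaps between consecutive $A$-vertices have length at least $k$), and each such copy must come from $G(n,p)$. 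An upper-tail/first-moment count then kills this for $p\le c\,n^{-(k-1)/(2k-3)}$. At $\alpha=\tfrac{1}{k+1}$ one has $|B|=k|A|$, so essentially every vertex of $B$ must lie in such a $P_k^2$; this is the $P_k^2$-factor threshold and gives the extra $(\log n)^{1/(2k-3)}$. Your heuristic about the local extension threshold is correct, but the bipartite construction is what realises it.

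On the upper bound, your decomposition heuristic---assemble $C_n^2$ from copies of $P_k^2$ in $G(n,p)$ joined by single vertices whose four incident edges come from $G_\alpha$---matches the paper's. The implementation, however, is quite different from your absorption-plus-cover plan. The paper does not use absorbers. It proves a stability dichotomy: either $G_\alpha$ is $\beta$-close to $K_{n/(k+1),\,kn/(k+1)}$, in which case one works directly with the bipartition, fixes parity and low-degree vertices, finds a $P_k^2$-factor in $B$ via Johansson--Kahn--Vu, and connects the pieces through $A$ by a directed Hamilton cycle in an auxiliary graph plus Hall's theorem; or $G_\alpha$ is far from this bipartite graph, in which case the regularity lemma is applied, the reduced graph is tiled by stars $K_{1,k}$ and matching edges $K_{1,1}$, and a technical lemma (Lemma~\ref{lem:multipartite2}) produces the square of a Hamilton path inside each star. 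The stability split is precisely what confines the $\log$ factor to the single point $\alpha=\tfrac{1}{k+1}$; a generic absorption argument would not obviously separate these cases.
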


In other words, as long as $\alpha\in(\frac13,\frac23)$ it suffices to add a linear number
of random edges to the deterministic graph $G_\alpha$ for enforcing the square
of a Hamilton cycle, and for $\alpha\le\frac13$ the perturbed threshold
$\hat{p}_\alpha(n)$ exhibits `jumps' at $\alpha=\frac{1}{k+1}$ for each integer $k \ge 2$,
where an extra $\log$-factor is needed for~$\alpha$
precisely equal to~$\frac{1}{k+1}$. A similar `jumping' phenomenon for the perturbed threshold
has been already observed for other subgraph containment problems: for
$K_r$-factors in~\cite{han2019tilings}
and for $C_\ell$-factors in~\cite{cycles_factors_full}.
However, Theorem~\ref{thm:main_small} is to the best of our knowledge the first
result exhibiting a countably infinite number of `jumps'.
Moreover, for $\alpha$ tending to zero, the threshold $\hat{p}_\alpha(n)$ tends to
$n^{-1/2}$, which is precisely the threshold for the square of a Hamilton cycle
in $G(n,p)$ alone as discussed above.

Since the square of a Hamilton cycle on~$n$ vertices contains each $n$-vertex
graph with maximum degree two as a subgraph, as a corollary to
Theorem~\ref{thm:main_small} we also get the following result, establishing the
perturbed threshold for $2$-universality for all~$\alpha$. Here, we say that a
graph is \emph{$r$-universal} if it contains all graphs of maximum degree at most~$r$.

\begin{theorem}[$2$-universality]
	\label{cor:2_univ}
	The perturbed threshold $\hat{p}_\alpha(n)$ for $2$-universality is
	\[
	\hat{p}_\alpha(n) =  
	\begin{cases}
		0 & \text{if } \alpha\ge\frac23\,, \\
		n^{-1} & \text{if } \alpha \in (\frac{1}{3},\frac{2}{3}) \, , \\ 
		n^{-1}\log n & \text{if } \alpha=\frac13 \, , \\ 
		n^{-2/3} & \text{if } \alpha \in (0,\frac{1}{3}) \, , \\ 
		n^{-2/3}(\log n)^{1/3} & \text{if } \alpha=0\,.
	\end{cases}
	\]	
\end{theorem}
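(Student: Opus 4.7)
The plan is to derive Theorem~\ref{cor:2_univ} from Theorem~\ref{thm:main_small} combined with the classical observation that the square of an $n$-vertex Hamilton cycle contains every $n$-vertex graph of maximum degree two as a spanning subgraph. Whenever Theorem~\ref{thm:main_small} produces such a squared Hamilton cycle at (or below) the probability claimed in Theorem~\ref{cor:2_univ}, $2$-universality follows at once. A case check of Theorem~\ref{thm:main_small} shows that this route already settles $\alpha \ge 2/3$ (trivially), $\alpha \in (1/3, 2/3)$ (at $p = Cn^{-1}$), $\alpha = 1/3$ (at $p = Cn^{-1}\log n$), and $\alpha \in (1/4, 1/3)$ (at $p = Cn^{-2/3}$, via the $k=3$ entry of Theorem~\ref{thm:main_small}). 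For the remaining upper-bound ranges $\alpha = 0$ and $\alpha \in (0, 1/4]$ the squared Hamilton cycle threshold is strictly larger than the target of Theorem~\ref{cor:2_univ}; at $\alpha = 0$ we invoke the standard $2$-universality theorem for $G(n,p)$ at $p \ge Cn^{-2/3}(\log n)^{1/3}$ (whose matching lower bound is the triangle-factor threshold), and at $\alpha \in (0, 1/4]$ we build, inside $G_\alpha \cup G(n,p)$ at $p \ge Cn^{-2/3}$, a universal absorbing structure that simultaneously accommodates every maximum-degree-two target, using the $\alpha n$ deterministic edges to remove the $(\log n)^{1/3}$ factor otherwise present in the purely random argument.

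For the lower bounds we take in every regime $G_\alpha := K_{\alpha n,(1-\alpha)n}$, an unbalanced complete bipartite graph with $\delta(G_\alpha) = \alpha n$, and exhibit a single obstructing graph of maximum degree two. When $\alpha \in (1/3, 2/3)$ the obstruction is a Hamilton cycle: any Hamilton cycle in $G_\alpha \cup G(n,p)$ uses at least $(1-2\alpha)n$ random edges inside the larger part of $G_\alpha$, while $G(n,p)$ with $p = cn^{-1}$ produces only $\tfrac{1}{2}(1-\alpha)^2 c n$ such edges in expectation, which is insufficient for small $c$. When $\alpha \le 1/3$ the obstruction becomes a triangle factor. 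A counting argument over the bipartition shows that a triangle factor in $G_\alpha \cup G(n,p)$ requires either (at $\alpha = 1/3$) a near-perfect matching inside the large part, which has Erd\H{o}s--R\'enyi sharp threshold $(\log n)/n$ and hence yields the extra $\log$ factor, or (at $\alpha < 1/3$) at least $(1-3\alpha)n/3$ triangles contained entirely inside one part and thus using three random edges, whose expected number is $\Theta(n^3 p^3) = O(c^3 n)$ when $p = cn^{-2/3}$ and therefore falls short for $c$ small. The case $\alpha = 0$ is the classical Johansson--Kahn--Vu lower bound for triangle factors.

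The main obstacle will be the upper bound in the range $\alpha \in (0, 1/4]$. Here Theorem~\ref{thm:main_small} gives only a squared Hamilton cycle threshold strictly above the target $n^{-2/3}$, and the purely random $2$-universality threshold also overshoots by a $(\log n)^{1/3}$ factor, so the two ingredients must be combined in a way that genuinely uses both. The difficulty is compounded by the fact that $2$-universality demands that a single host graph support every maximum-degree-two target on $n$ vertices simultaneously, which is a uniform statement much stronger than embedding any one such target; in particular, no naive union bound over the $e^{\Theta(n \log n)}$ isomorphism classes is viable. The natural path is to construct a universal absorber from a combination of deterministic and random edges, cover almost all vertices of an arbitrary target greedily, and then absorb the leftover using the prepared structure, the delicate point being that the same absorber must handle every possible leftover pattern of disjoint cycles and paths.
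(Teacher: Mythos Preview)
Your treatment of $\alpha\ge\tfrac13$ matches the paper's: the upper bounds follow from Theorem~\ref{thm:main_small} (indeed you go slightly further, observing that the $k=3$ entry of Theorem~\ref{thm:main_small} also handles $\alpha\in(\tfrac14,\tfrac13)$, whereas the paper simply cites~\cite{parczyk20202} for all of $(0,\tfrac13)$). The paper does not spell out the lower bounds for Theorem~\ref{cor:2_univ}; they are implicit in the known triangle-factor lower bounds from~\cite{triangle_paper}.

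There are two genuine problems with your proposal. First, your choice $G_\alpha=K_{\alpha n,(1-\alpha)n}$ does not have minimum degree $\alpha n$ when $\alpha>\tfrac12$: its minimum degree is $(1-\alpha)n<\alpha n$, so it is not an admissible witness for the lower bound in the range $\alpha\in(\tfrac12,\tfrac23)$. One needs a different construction there (and the paper, rather than giving one, relies on the triangle-factor lower bound established in~\cite{triangle_paper} and the Dudek--Reiher--Ruci\'nski--Schacht result for the squared Hamilton cycle).

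Second, and more seriously, the upper bound for $\alpha\in(0,\tfrac14]$ is not proved in your proposal. You correctly diagnose that neither Theorem~\ref{thm:main_small} nor the purely random $2$-universality result reaches $n^{-2/3}$ here, and you outline an absorber-based strategy, but you do not construct the absorber, verify that it handles every leftover pattern, or explain how the deterministic edges eliminate the $(\log n)^{1/3}$ factor. This is exactly the content of~\cite{parczyk20202}, which the paper cites as a black box; your sketch does not substitute for that argument. As written, this case is a gap.
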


The range $\frac 13 \le \alpha < \frac23$ is a corollary of our Theorem~\ref{thm:main_small}, while the remaining cases follow from known results. For $\alpha=0$, this
is due to Ferber, Kronenberg and Luh~\cite{ferber2019optimal}, for $\alpha\ge
\frac23$ to Aigner and Brandt~\cite{aigner1993embedding}, and for $\alpha \in
(0,\frac13)$ to~\cite{parczyk20202}. The result in the range $\alpha\in[\frac13,\frac23)$ significantly strengthens one of our results from~\cite{triangle_paper},
where the same result was established for the containment of a triangle factor
only.

Observe that the perturbed threshold for the containment of the square of a
Hamilton cycle and the perturbed threshold for $2$-universality differ for
$\alpha<\frac13$. This is due to the fact that in this regime the structure of the
deterministic graph $G_\alpha$ may force us to find many copies of the square of
a short path in $G(n,p)$ if we want to find the square of a Hamilton cycle in $G_\alpha\cup
G(n,p)$ (see Section~\ref{subsec:lower_bounds} for more details).

\subsection{Related work}
\label{sec:relatedWork}

As indicated before, there has recently been a wealth of results on properties
of randomly perturbed graphs. Let us close our introduction by briefly
reviewing those concerning the containment of $r$-th powers of Hamilton
cycles. Analogously to the square, the
\emph{$r$-th power} of a graph~$H$ is obtained from~$H$ by adding edges between all
vertices of distance at most~$r$ in~$H$.

We start with the case $r=1$. Here, the case $\alpha\ge \frac12$ is the classical
Theorem of Dirac~\cite{dirac1952}, which asserts that each $n$-vertex graph with
minimum degree at least $\frac 12 n$ has a Hamilton cycle, hence no random edges
are required.  The case $\alpha=0$ is treated by a famous result of
P\'osa~\cite{posa}, which shows that in $G(n,p)$ the threshold for the
containment of a Hamilton cycle is $n^{-1} \log n$. For $\alpha$ between these
two extremes, Bohman, Frieze and Martin~\cite{bohman2003} determined the
perturbed threshold. They proved that for any $\alpha \in (0,\frac12)$, the randomly
perturbed graph $G_\alpha \cup G(n,p)$ a.a.s. has a Hamilton cycle for $p \ge C/n$
with~$C$ sufficiently large, and that this is optimal because for making the
complete bipartite graph $K_{\alpha n,(1-\alpha)n}$ Hamiltonian we need a linear
number of edges.

Turning to $r\ge 2$, proving a conjecture of
Seymour~\cite{Seymour:conj} for large~$n$, Koml\'os, Sark\"ozy and
Szemer\'edi~\cite{komlos1998proof} showed that any large $n$-vertex graph~$G$ with
minimum degree $\delta(G)\ge\frac{r}{r+1}n$ contains the $r$-th power of a Hamilton
cycle, thus establishing that the perturbed threshold is~$0$ for
$\alpha\ge\frac{r}{r+1}$. In $G(n,p)$ alone, i.e. when $\alpha=0$, the threshold for the
containment of th $r$-th power of a Hamilton cycle is  $n^{-1/r}$.
This follows from a much more general result of Riordan~\cite{oliver} for $r \ge 3$ and from~\cite{square_random} for $r = 2$.  The perturbed threshold
behaves differently for any $\alpha>0$, as was shown
in~\cite{bottcher2017embedding}, where it is proved that in $G_\alpha\cup
G(n,p)$ for any $\alpha \in (0,1)$ there exists $\eta>0$ such that the perturbed
threshold for the containment of the $r$-th power of a Hamilton cycle is at most
$n^{-1/r-\eta}$, and it was asked what the optimal~$\eta$ here is. In this paper
we answer this question in the case $r=2$.

More is known for $\alpha\ge\frac r{r+1}$. In this regime, where~$G_\alpha$
alone contains the $r$-th power of a Hamilton cycle, Dudek, Reiher, Ruci\'nski,
and Schacht~\cite{dudek2020powers} showed that adding a linear number of random
edges suffices to enforce an $(r+1)$-st power of a Hamilton cycle. This was improved by Nenadov and Truji\'c~\cite{nenadov2018sprinkling} who showed
that one can indeed enforce the $(2r+1)$-st power of a Hamilton cycle with these
parameters.
When $\alpha > \frac12$, even higher powers of Hamilton cycles have been studied by
Antoniuk, Dudek, Reiher, Ruci\'{n}ski and Schacht~\cite{antoniuk2020high}, who
proved that in many cases the perturbed threshold is guided by the largest clique required
from $G(n,p)$.

For certain values of~$\alpha$ and $r \ge 3$, the perturbed threshold is not yet precisely
known for $K_{r+1}$-factors (see, e.g., \cite{triangle_paper} for
more details).
This suggests that determining the behaviour of the perturbed threshold in the entire range of $\alpha$
for the $r$-th power of a Hamilton cycle for $r>2$ may be challenging.
We discuss this further for $r=3$ in Section~\ref{sec:concluding}.

\subsection*{Organisation}

The rest of the paper is organised as follows.
In the next section we introduce
some fundamental tools, which we will apply in our proofs.  
Then in Section~\ref{sec:overview} we discuss a more general stability version of our main result and provide an overview of our proofs, together with the lower bound constructions
and some auxiliary lemmas.
In Section~\ref{sec:extremal} and~\ref{sec:non-extremal}, we prove the extremal
case and the non-extremal case of our main result, respectively.
%%%
In Section~\ref{sec:aux_reg} we prove a technical lemma that is used in Section~\ref{sec:auxiliary} to prove the auxiliary lemmas.  
We then finish with some concluding remarks and open problems in
Section~\ref{sec:concluding}.  A few standard proofs are postponed to
Appendix~\ref{sec:appendix}.

\subsection*{Notation}
For numbers $a$, $b$, $c$, we write $a = b \pm c$ for $b-c \le a \le b+c$.
Moreover, for non-negative $a$,$b$ we write $0<a \ll b$, when we require $a \le f(b)$ for some function $f \colon \mathbb{R}_{>0} \mapsto \mathbb{R}_{>0}$.
We will only use this to improve readability and in addition to the precise dependencies of the constants.
Moreover, for an event $A=A(n)$ depending on $n \in \mathbb{N}$, we say that $A$ happens asymptotically almost surely (a.a.s.) if $\PP[A] \rightarrow 1$ as $n \rightarrow \infty$.

We use standard graph theory notation.
For a graph $G$ on vertex set $V$ and two disjoint sets $A$, $B \subset V$, we let $G[A]$ be the subgraph of $G$ induced by $A$, $G[A,B]$ be the bipartite subgraph of $G$ induced by sets $A$ and $B$, $e(A)$ be the number of edges with both endpoints in $A$ and $e(A,B)$ be the number of edges with one endpoint in $A$ and the other one in $B$.
More generally, given pairwise-disjoint sets $U_1,\dots,U_h \subseteq V$, we let $G[U_1,\dots,U_h]$ be the induced $h$-partite subgraph $\bigcup_{1\le i<j\le h}G[U_i,U_j]$ of $G$.
Given an integer $r \ge 1$, we denote by $G^r$ the $r$-power of $G$, i.e. the graph obtained from $G$ by adding edges between all vertices of distance at most $r$ in $G$.
We denote the $k$-vertex path by $P_k$ and the $n$-vertex cycle by $C_n$.
Given a subset $W \subseteq V(G)$ and $v \in V(G) \setminus W$, the notation $N_G(v,W)$ stands for the neighbourhood of $v$ in $W$ in the graph $G$ and we denote its size by $\deg_G(v,W)$, where we may omit the index $G$ when the graph is clear from the context.
The $1$-density of $G$ is defined by $ m_1(G) = \max \left\{ \frac{e(F)}{v(F)-1} : F \subseteq G \text{ with } v(F) \ge 2 \right\}$.

Moreover, given $p \in [0,1]$, an integer $k \ge 1$ and $k$ pairwise-disjoint sets of vertices $V_1, \dots, V_k$, we denote by $G(V_1,\dots,V_k,p)$ the random $k$-partite graph with parts $V_1, \dots, V_k$, where each pair of vertices in two different parts forms an edge with probability $p$, independently of other pairs.
Further, we denote by $G(V_1,p)$  the random graph on $V_1$, where each pair of vertices in $V_1$ forms an edge with probability $p$, independently of other pairs.
Moreover, we denote by $\overrightarrow{G}(n,p)$ denote the binomial random directed graph on vertex set $[n]$, where each tuple $(u,v) \in [n]^2$ with $u \not= v$ is a directed edge with probability $p$ independently of all other choices.

Given a copy $F$ of the square of a path on $k$ vertices $P_k^2$, we let $v_1, v_2,\ldots, v_k$ be an ordering of the vertices of $F$ such that its edges are precisely $v_iv_j$, for each $i,j$ with $1\le |i-j|\le 2$. 
We call $(v_2, v_1)$ and $(v_{k-1}, v_k)$ the \emph{end-tuples} of $F$, we refer to $v_i$ as the $i$-th vertex of $F$ and we refer to $F$ as the square of the path $v_1, v_2,\ldots, v_k$.
The choice of taking $(v_2,v_1)$ rather than $(v_1,v_2)$ as end-tuple is intentional. 
This is to ensure that for both the end-tuples $(v_2, v_1)$ and $(v_{k-1}, v_k)$, it is always the second vertex, i.e. $v_1$ and $v_k$ respectively, that is an endpoint of the path $v_1, v_2,\ldots, v_k$.
For simplicity we will talk about tuples $(u,v)$ from a set $V$, when implicitly meaning from $V^2$.

Finally, given $s \ge 1$ and sets $V_1, \dots, V_s$, when we say that a tuple belongs to $\prod_{i=1}^s V_i^k$, we mean that the tuple belongs to $V_1^k \times \dots \times V_s^k$, i.e. it is of the form $(v_{i,j}:1 \le i \le s, 1 \le j \le k)$ with $v_{i,j} \in V_i$ for $i=1,\dots,s$ and $j=1,\dots,k$.

\section{Tools}

We will repeatedly use the following concentration inequality due to Chernoff (see e.g.~\cite[Corollaries 2.3 and 2.4]{JLR} and~\cite{chernoff_entropy}).
\begin{lemma}[Chernoff's inequality]
	\label{lem:chernoff}
	Let $X$ be the sum of independent Bernoulli random variables, then for any $\delta \in (0,1)$ we have
	\[ \PP \left[ |X-\EE[X]| \ge \delta \, \EE[X] \right] \le 2 \exp \left( -\frac{\delta^2}{3} \EE[X] \right) \]
	and for any $k \ge 7 \cdot \EE[X]$ we have $\PP[X > k] \le \exp(-k)$.
	More precisely, if $p$ is the success probability and there are $n$ summands we get
	\[ \PP \left[ X \le \EE[X] - \delta n \right] \le \exp ( -D( p-\delta || p ) \, n )  \, , \]
	where $D(x||y)=x \log (\tfrac{x}{y}) + (1-x) \log (\tfrac{1-x}{1-y})$ is the relative entropy.
\end{lemma}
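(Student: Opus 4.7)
The plan is to invoke the standard Cram\'er--Chernoff recipe. First I apply Markov's inequality to $e^{tX}$: for $t>0$, $\PP[X\ge a] \le e^{-ta}\,\EE[e^{tX}]$, and by independence together with the Bernoulli identity $\EE[e^{tX_i}] = 1 + p_i(e^t-1) \le \exp(p_i(e^t-1))$, this gives $\EE[e^{tX}] \le \exp(\mu(e^t-1))$ with $\mu = \EE[X]$. Optimising over~$t$ yields the parametric form $\PP[X \ge (1+\delta)\mu] \le (e^\delta/(1+\delta)^{1+\delta})^\mu$, and running the same argument with $t<0$ produces the analogous lower-tail estimate $\PP[X \le (1-\delta)\mu] \le (e^{-\delta}/(1-\delta)^{1-\delta})^\mu$.

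All three claims follow from these two bounds by routine calculus. The symmetric multiplicative form with exponent $\delta^2/3$ reduces to verifying the standard Taylor-expansion inequalities $(1+\delta)\log(1+\delta)-\delta \ge \delta^2/3$ and $(1-\delta)\log(1-\delta)+\delta \ge \delta^2/3$ on $(0,1)$, followed by a union bound over the two tails. For the large-deviation claim $\PP[X>k]\le e^{-k}$ when $k \ge 7\,\EE[X]$, I write $k = \lambda\mu$ with $\lambda \ge 7$; the upper-tail exponent becomes $\mu(\lambda - 1 - \lambda \log \lambda)$, so the conclusion reduces to $\log \lambda \ge 2 - 1/\lambda$, which holds at $\lambda = 7$ (since $\log 7 \approx 1.946 > 1.857 \approx 2 - 1/7$) and for all larger~$\lambda$ by monotonicity. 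Finally, the entropy form specialises to the i.i.d.\ case with common success probability~$p$: at threshold $(p-\delta)n$ the minimiser $t^\ast = \log\frac{(p-\delta)(1-p)}{(1-p+\delta)p}$ in the MGF bound substitutes back to yield precisely the exponent $-n\,D(p-\delta \| p)$, after elementary simplification of $pe^{t^\ast} + 1-p = (1-p)/(1-p+\delta)$.

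There is no real obstacle here: this lemma is a textbook consequence of the exponential Markov inequality, which is why the authors simply cite standard references rather than include a proof. The only mild care needed is checking the elementary one-variable inequalities mentioned in the previous paragraph.
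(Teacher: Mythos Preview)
The paper does not prove this lemma; it simply cites standard references (\cite[Corollaries~2.3 and~2.4]{JLR} and~\cite{chernoff_entropy}) and moves on. Your proposal supplies exactly the textbook Cram\'er--Chernoff argument that those references contain, and the details you give are correct: the MGF bound, the optimisation yielding the parametric forms, the calculus checks $(1+\delta)\log(1+\delta)-\delta\ge\delta^2/3$ and $(1-\delta)\log(1-\delta)+\delta\ge\delta^2/2\ge\delta^2/3$ on $(0,1)$, the verification $\log 7>2-1/7$ for the large-deviation tail, and the substitution of the optimal $t^\ast$ in the i.i.d.\ case to recover the relative-entropy exponent. There is nothing to compare against and nothing to correct.
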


\subsection{Subgraphs in random graphs}

The following lemma is well-known and follows from a standard application of Janson's inequality (Lemma~\ref{lem:janson}).

\begin{lemma}
	\label{lem:embedding}
	For any graph $F$ and any $\delta >0$, there exists $C>0$ such that the following holds for $p \ge C n^{-1/m_1(F)}$. In the random graph $G(n,p)$ a.a.s.~any set of $\delta n$ vertices contains a copy of $F$.
\end{lemma}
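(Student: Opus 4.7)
The plan is to fix a subset $S \subseteq [n]$ with $|S| = \delta n$, bound the probability that $\Gnp[S]$ contains no copy of~$F$ using Janson's inequality (Lemma~\ref{lem:janson}), and then take a union bound over the $\binom{n}{\delta n} \le 2^n$ choices of~$S$. The goal is a bound of the form $\PP[\Gnp[S] \not\supseteq F] \le \exp(-Kn)$ with $K = K(C)$ that we can make as large as desired by choosing the constant $C$ in the statement sufficiently large.

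Let $X$ be the number of copies of $F$ in $\Gnp[S]$, and write $v = v(F)$, $e = e(F)$. A direct count gives $\mu := \EE[X] = \Theta_{F,\delta}(n^v p^e)$. Since $m_1(F) \ge e/(v-1)$ by definition, the assumption $p \ge C n^{-1/m_1(F)}$ forces $n^v p^e \ge C^e n$, so $\mu \ge c_{F,\delta}\, C^e\, n$. For Janson I also need
\[
\Delta \;=\; \sum_{F_1 \ne F_2} \PP\bigl[F_1 \cup F_2 \subseteq \Gnp\bigr],
\]
where the sum runs over ordered pairs of distinct copies $F_1, F_2 \subseteq S$ sharing at least one edge. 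Grouping by the isomorphism type of the intersection $H = F_1 \cap F_2$ (a subgraph of $F$ with $v(H) \ge 2$ and $e(H) \ge 1$), a standard enumeration yields
\[
\frac{\Delta}{\mu^2} \;=\; \sum_{\substack{H \subseteq F \\ e(H) \ge 1}} O_{F,\delta}\!\bigl(n^{-v(H)} p^{-e(H)}\bigr).
\]
For every such $H$, the definition of $m_1(F)$ gives $(v(H)-1)/e(H) \le \tfrac{1}{m_1(F)}$ upside down, i.e.~$e(H)/(v(H)-1) \le m_1(F)$, so $n^{v(H)-1} p^{e(H)} \ge C^{e(H)}$; hence each term in the sum is $O(n^{-1} C^{-e(H)})$. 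Summing over the finitely many subgraph types of $F$ gives $\Delta/\mu^2 = O_{F,\delta}(n^{-1}/C)$.

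Combining these two estimates and applying Janson's inequality in the form $\PP[X = 0] \le \exp\bigl(-\mu^2/(2(\mu + \Delta))\bigr)$ gives $\PP[X = 0] \le \exp(-K(C)\, n)$ with $K(C) \to \infty$ as $C \to \infty$. Choosing $C$ large enough that $K(C) > 2$, a union bound over all at most $2^n$ candidate sets $S$ of size $\delta n$ finishes the argument, since $|S| \ge \delta n$ trivially contains such an $S$. No step is a serious obstacle: the whole proof is the textbook Janson computation, and the only point requiring care is aligning the exponent in the bound on $\Delta/\mu^2$ with the $1$-density $m_1(F)$ so that the supercritical regime $p \ge C n^{-1/m_1(F)}$ is exactly what is needed.
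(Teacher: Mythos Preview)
Your proof is correct and is exactly the ``standard application of Janson's inequality'' that the paper refers to without spelling out. The only slip is the direction of the first inequality in the $\Delta/\mu^2$ step (you want $(v(H)-1)/e(H) \ge 1/m_1(F)$, not $\le$), but your subsequent ``i.e.'' clause and the conclusion $n^{v(H)-1}p^{e(H)} \ge C^{e(H)}$ are both right, so this is a harmless typo.
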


Note that $m_1(P_{k})=1$ and $m_1(P_{k}^2) = \tfrac{2k-3}{k-1}$ and, therefore, the bounds on $p$ given by Lemma~\ref{lem:embedding} for the containment of a copy of $P_{k}$ and $P_{k}^2$ in any linear sized set are $p \ge C/n$ and $p \ge C n^{-(k-1)/(2k-3)}$, respectively.
In a breakthrough result Johansson, Kahn and Vu~\cite{johansson2008factors} determined the threshold for covering all vertices of $G(n,p)$ with pairwise vertex-disjoint copies of $F$, for any strictly $1$-balanced graph $F$, i.e.~those graphs with $1$-density strictly larger than that of any proper subgraph. 
We state their result below.

\begin{theorem}[Johansson, Kahn and Vu~\cite{johansson2008factors}]
	\label{thm:JKV}
	Let $F$ be a graph such that $m_1(F') < m_1(F)$ for all $F' \subseteq F$ with $F' \not= F$ and $v(F') \ge 2$.
	Then there exists $C >0$ such that a.a.s.~in $G(n,p)$ there are $\lfloor n/v(F) \rfloor$ pairwise vertex-disjoint copies of $F$, provided that $p \ge C (\log n)^{1/e(F)} n^{-1/m_1(F)}$.
\end{theorem}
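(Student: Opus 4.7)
The plan is to handle the upper bound — the only direction stated — via the spread/expectation-threshold framework highlighted in the introduction. Introduce the hypergraph $\cH^\star$ on vertex set $E(K_n)$ whose hyperedges are the edge sets of the $F$-factors of $K_n$. Then $G(n,p)$ contains an $F$-factor iff the random $p$-subset of $E(K_n)$ covers some hyperedge of $\cH^\star$. The Frankston--Kahn--Narayanan--Park theorem (in its sharpening that exploits the bounded hyperedge size, as in Kahn--Park or Talagrand) guarantees that this happens a.a.s.\ as soon as $p \ge C q (\log n)^{1/e(F)}$, provided $\cH^\star$ is $q$-spread, i.e.\ for every $S \subset E(K_n)$,
\[
\bigl|\{ H \in \cH^\star : S \subset H \}\bigr| \le q^{|S|}\,|\cH^\star|.
\]
Setting $q = n^{-1/m_1(F)}$ would then yield exactly the claimed threshold.

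The hard part is verifying $q$-spreadness with this value of $q$. This reduces, edge by edge, to the estimate that for every partial edge set $S$ extendable to an $F$-factor and every $e \notin S$, the number of $F$-factors of $K_n$ containing $S \cup \{e\}$ is at most roughly $q$ times the number containing $S$. The strict $1$-balancedness of $F$ is precisely the combinatorial input that makes this work: for every proper subgraph $F' \subsetneq F$ with $v(F') \ge 2$ one has $e(F')/(v(F')-1) < m_1(F)$, so every committed edge yields the right multiplicative saving when one counts extensions into fresh vertices. The delicate bookkeeping — distinguishing edges internal to partial copies of $F$ already forced by $S$ from edges that spawn new copies, propagating constraints across many pairs, and avoiding overcounting when many vertices of $F$ are already placed by $S$ — is where essentially all of the work would lie.

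Alternatively, one could avoid FKNP entirely and run the original entropy argument of Johansson--Kahn--Vu: construct a uniform random pseudo-factor (a random $F$-factor-like structure that may reuse vertices) and show, via entropy concavity, that the induced distribution on edges resembles a Bernoulli product with $p$ slightly above the threshold, after which a random sparsification plus an absorption/patching step upgrades the pseudo-factor to an honest $F$-factor. Either way, the combinatorial core is the same extension estimate for partial copies of $F$, and it is the strict $1$-balancedness hypothesis that drives the count; I would expect this local counting — rather than the outer random-graph machinery — to be the main obstacle.
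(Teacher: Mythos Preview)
The paper does not prove this theorem; it is quoted from Johansson, Kahn, and Vu and used as a black box (in Section~\ref{sec:extremal}, to obtain a $P_k^2$-factor inside $B_2$). There is therefore no ``paper's own proof'' to compare your proposal against.

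As to the proposal itself: both routes you outline are legitimate at the level of a sketch. The FKNP/spread approach does recover the JKV threshold once one verifies that the uniform distribution on $F$-factors is $O(n^{-1/m_1(F)})$-spread, but note that this verification is itself a substantial argument (essentially what Kahn--Narayanan--Park carry out for the clique case, and what requires care for general strictly $1$-balanced $F$); your one-line ``edge by edge'' reduction glosses over the real work of controlling extensions of an arbitrary planted edge set $S$, which is not a simple inductive step. The entropy route you mention second is closer to what Johansson--Kahn--Vu actually do, and your identification of the key combinatorial input---the strict $1$-balancedness driving the extension counts---is correct. Either way, what you have written is an outline rather than a proof, but since the present paper only cites the result, that is all that is needed here.
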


Often we will need to find combinations of squares of paths in $G(n,p)$ whose vertices must satisfy some additional constraints; for that, we will use the following lemma.

\begin{lemma}
	\label{lem:paths2}
	For all integers $s \ge 1$ and $k \ge 2$, and any $0 < \eta \le 1$, there exists $C>0$ such that the following holds for $p \ge C n^{-(k-1)/(2k-3)}$.
	Let $V$ be a vertex set of size $n$, $V_1,\dots,V_s$ not necessarily disjoint subsets of $V$ and $H$ be a collection of pairwise distinct tuples from $\prod_{i=1}^s V_i^k$.
	Then a.a.s.~revealing $\Gamma=G(n,p)$ on $V$ gives the following.
	For any choice of $W_i \subset V_i$ with $i=1,\dots,s$ such that $H'=H \cap \prod_{i=1}^s W_i^k$ has size at least $\eta n^{sk}$, there is a tuple $(v_{i,j}:1 \le i \le s, 1 \le j \le k)$ in $H'$ with pairwise distinct vertices $v_{i,j} \in W_i$ for $i=1,\dots,s$, $j=1,\dots,k$, such that in $\Gamma$ for $i=1,\dots,s$ we have the square of a path on $v_{i,1},\dots,v_{i,k}$ and for $i=1,\dots,s-1$ we have the edge $v_{i,k}v_{i+1,1}$.
\end{lemma}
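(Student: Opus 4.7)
The plan is to apply Janson's inequality (referred to as Lemma~\ref{lem:janson} in the paper) for a fixed choice of $(W_1,\dots,W_s)$, and then take a union bound over the at most $2^{sn}$ possible such choices; the $C$ in the hypothesis will be chosen large enough that the Janson failure probability dominates this union bound. First, I would assume WLOG that every tuple in $H$ has pairwise distinct entries, since the number of degenerate tuples is $O(n^{sk-1})$, which is negligible compared to $\eta n^{sk}$ (replace $\eta$ by $\eta/2$).

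Fix $(W_1,\dots,W_s)$ with $|H'|:=|H\cap \prod_i W_i^k|\ge (\eta/2)n^{sk}$. For each $\sigma=(v_{i,j})\in H'$, let $Y_\sigma$ be the indicator that $\Gamma$ contains the $N:=s(2k-3)+(s-1)$ edges specified by the desired configuration: the $s$ square paths on the rows, plus the $s-1$ linking edges $v_{i,k}v_{i+1,1}$. Setting $Y=\sum_{\sigma\in H'}Y_\sigma$, a direct calculation gives
\[
\mathbb{E}[Y]\;\ge\;\tfrac{\eta}{2}\,n^{sk}p^N\;\ge\;\tfrac{\eta}{2}\,C^N\,n^{(sk-2s+k-1)/(2k-3)}\;\ge\;\tfrac{\eta}{2}\,C^N\,n,
\]
using $p\ge Cn^{-(k-1)/(2k-3)}$ and the elementary inequality $(sk-2s+k-1)/(2k-3)\ge 1$ for all $s\ge 1$, $k\ge 2$.

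Janson's inequality then yields $\mathbb{P}[Y=0]\le \exp(-\mathbb{E}[Y]^2/(2\bar\Delta))$, where $\bar\Delta$ sums $\mathbb{E}[Y_\sigma Y_{\tau}]$ over ordered pairs $\sigma\ne\tau\in H'$ sharing at least one potential edge. Writing $F$ for the target configuration (the union of $s$ copies of $P_k^2$ joined by the $s-1$ linking edges), I would split the $\bar\Delta$-sum by the subgraph $F_0\subseteq F$ realised on the overlap $\sigma\cap\tau$. The crucial point is that the $1$-density $m_1(F)$ equals $(2k-3)/(k-1)=m_1(P_k^2)$: indeed, checking that $(s(2k-3)+s-1)/(sk-1)\le(2k-3)/(k-1)$ reduces to $(s-1)(k-2)\ge 0$. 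Hence for every $F_0\subseteq F$ with $v(F_0)\ge 2$, $e(F_0)/(v(F_0)-1)\le m_1(F)$, which gives $n^{v(F_0)-1}p^{e(F_0)}\ge C^{(2k-3)(v(F_0)-1)/(k-1)}$, so the contribution of $F_0$ to $\bar\Delta$ is at most $\mathbb{E}[Y]^2/(n\cdot C^{\Omega_{s,k}(1)})$. Summing over the $O_{s,k}(1)$ choices of $F_0$ yields $\bar\Delta\le \mathbb{E}[Y]^2/(Cn)$ once $C$ is large, and therefore $\mathbb{P}[Y=0]\le \exp(-\Omega(Cn))$.

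Taking $C$ large enough that this exponent exceeds $2s\log 2\cdot n$, the union bound over the $\le 2^{sn}$ choices of $(W_1,\dots,W_s)$ closes the argument. The main obstacle is the overlap analysis that controls $\bar\Delta$: one has to carefully enumerate the combinatorial types of overlap of two tuples in $H'$ (which subgraph $F_0\subseteq F$ is shared, and how each tuple embeds $F$) and verify that the $C$-gain in the 1-density inequality is enough to both absorb these combinatorial factors and beat the $2^{sn}$ union bound.
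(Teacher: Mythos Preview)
Your approach is correct and essentially identical to the paper's: Janson's inequality for fixed $(W_1,\dots,W_s)$ followed by a union bound over the at most $2^{sn}$ choices, with the overlap analysis for $\bar\Delta$ organised by the shared subgraph $F_0$ (the paper instead organises it by the number $m$ of shared vertices and explicitly computes the maximal edge count $\tilde e(m)$ for each $m$, which amounts to the same bound). One point to tighten: your justification that $m_1(F)=(2k-3)/(k-1)$ only checks the density of $F$ itself, not of all subgraphs $F_0\subseteq F$; the full verification (which the paper effectively carries out via its case analysis of $\tilde e(m)$, showing $\tilde e(m)\le (m-1)(2k-3)/(k-1)$ for all $m$) is what actually guarantees $n^{v(F_0)-1}p^{e(F_0)}\ge C$ for every overlap type.
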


Observe that the structure we get from Lemma~\ref{lem:paths2} in $G(n,p)$ is given by $s$ copies of the square of a path on $k$ vertices and $s-1$ additional edges joining two consecutive such copies.
Moreover when $k=2$ the structure is simply a path on $2s$ vertices.
In applications, we will often define several collections of tuples $H_j \subseteq \prod_{i=1}^s V_{j,i}^k$ for $j=1,\dots,m$ and apply Lemma~\ref{lem:paths2} to $H=\bigcup_{j=1}^m H_j$, where it is implicit that we apply it with $V_i= \bigcup_{j=1}^m V_{j,i}$.
Also, we stress that, for a fixed $H$ and a typical revealed $G(n,p)$, the conclusion of the lemma holds for any large enough subset of the form $H \cap \prod_{i=1}^s W_i^k$ with $W_i \subseteq V_i$.
In particular, we will be able to claim the existence of a tuple in each subcollection $H_j$, again provided they have the right size. 
The proof of this lemma is standard, uses Janson's inequality and is given in Appendix~\ref{sec:appendix}.

Recall that $\overrightarrow{G}(n,p)$ denotes the binomial random directed graph on vertex set $[n]$, where each tuple $(u,v) \in [n]^2$ with $u \not= v$ is a directed edge with probability $p$ independently of all other choices.
The next theorem will allow us to find a directed Hamilton cycle in $\overrightarrow{G}(n,p)$.

\begin{theorem}[Angluin and Valiant~\cite{angluin1979fast}]
	\label{lem:directed_ham}
	There exists $C>0$ such that for $p \ge C \log n/n$ a.a.s.~ $\overrightarrow{G}(n,p)$ has a directed Hamilton cycle.
\end{theorem}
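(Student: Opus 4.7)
The plan is to use two-round exposure combined with a cycle-cover merging argument, since P\'osa-style rotations do not adapt straightforwardly to the directed setting. Split $p$ so that $\overrightarrow{G}(n,p)$ has the same distribution as $\overrightarrow{G}(n,p_1)\cup\overrightarrow{G}(n,p_2)$ with $\overrightarrow{G}(n,p_1)$ and $\overrightarrow{G}(n,p_2)$ independent and $p_1,p_2\ge C'\log n/n$ for a suitably large $C'$. First I would use $\overrightarrow{G}(n,p_1)$ to produce a spanning family of vertex-disjoint directed cycles, by considering the auxiliary bipartite graph $B$ with parts $V^+=\{v^+:v\in V\}$ and $V^-=\{v^-:v\in V\}$ in which $u^+v^-$ is an edge iff $(u,v)$ is a directed edge of $\overrightarrow{G}(n,p_1)$. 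The classical theorem of Erd\H{o}s and R\'enyi on random bipartite graphs then gives that $B$ a.a.s.~admits a perfect matching $M$; as $B$ has no edge of the form $v^+v^-$, this $M$ corresponds to a fixed-point-free permutation $\sigma$ of $V$ satisfying $v\sigma(v)\in E(\overrightarrow{G}(n,p_1))$ for every $v$, whose cycles give vertex-disjoint directed cycles $C_1,\dots,C_k$ spanning $V$ inside $\overrightarrow{G}(n,p_1)$.

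Next I would use the independent digraph $\overrightarrow{G}(n,p_2)$ to merge these cycles into a single Hamilton cycle. The key observation is that if $(u,v)\in C_i$ and $(u',v')\in C_j$ with $i\ne j$ are directed edges of the cycle cover, then the simultaneous presence in $\overrightarrow{G}(n,p_2)$ of the edges $(u,v')$ and $(u',v)$ allows the rewiring $(u,v),(u',v')\mapsto (u,v'),(u',v)$, which fuses $C_i$ and $C_j$ into a single directed cycle on the same vertex set. Each ordered pair $(C_i,C_j)$ admits $|C_i|\cdot|C_j|$ candidate edge-pairs and each candidate is a valid merging pair with probability $p_2^2$, so Chernoff-type estimates yield the required merges whenever $|C_i|\cdot|C_j|\cdot p_2^2$ is not too small. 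Iterating $k-1$ such merges produces the desired Hamilton cycle.

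The main obstacle is controlling the cycle structure produced by the first step well enough to carry out all merges. One clean approach is to argue that the perfect matching in $B$ can be selected uniformly at random so that $\sigma$ behaves like a uniformly random fixed-point-free permutation of $V$, which a.a.s.~has $O(\log n)$ cycles, only $O(1)$ of which are very short. A more hands-on alternative is to first absorb cycles of length at most some constant $L$ into longer ones by a local surgery along two consecutive edges of a long cycle, using a few edges of $\overrightarrow{G}(n,p_2)$, and then to deal with the $O(\log n)$ remaining cycles (all of length $\ge L$) by a union bound over pairs. Either way, balancing the split of $p$ between $p_1$ and $p_2$ and handling the very short cycles are the main technical points; the independence of $\overrightarrow{G}(n,p_1)$ and $\overrightarrow{G}(n,p_2)$ makes the conditioning between the two rounds routine.
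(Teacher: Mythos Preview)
The paper does not prove this statement at all: it is quoted as a known result of Angluin and Valiant~\cite{angluin1979fast} and used as a black box, so there is no proof in the paper to compare your proposal against.

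As to the proposal itself, the overall two-round strategy (first find a directed $1$-factor via a bipartite perfect matching, then merge cycles using fresh random edges) is a legitimate and well-known route to such results, but your write-up has a genuine gap in the merging step. Your ``clean approach'' asserts that a perfect matching of the auxiliary bipartite graph $B$ can be chosen uniformly at random so that the resulting permutation $\sigma$ is distributed as a uniformly random derangement. This is false: conditioning on the random bipartite graph $B$, the uniform distribution on perfect matchings of $B$ is \emph{not} the uniform distribution on derangements of $V$, and one cannot directly import the cycle statistics of a uniform random permutation. Establishing that the $1$-factor a.a.s.\ has only $O(\log n)$ cycles, with few short ones, is itself a nontrivial theorem about random digraphs and requires a separate argument.

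Your alternative route (first absorb all cycles of length at most a constant $L$ into longer ones, then merge the remaining $\le n/L$ long cycles pairwise) is more honest, but as written it is only a plan: you still need to bound the number of short cycles produced by the first round, verify that the local surgeries for absorbing short cycles do not interfere with one another or reuse edges, and check that after absorption the number of surviving cycles is small enough that a union bound over the $k-1$ merges succeeds with the available $p_2$. None of this is impossible, but none of it is done in your proposal either.
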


\subsection{Regularity}

We will use Szemer\'edi's Regularity Lemma~\cite{szem76} and some of its consequences.
Before stating these, we introduce the relevant terminology.
The \emph{density} of a pair $(A,B)$ of disjoint sets of vertices is defined by 
\begin{align*}
	d(A,B)= \frac{e(A,B)}{|A|\cdot |B|}
\end{align*}
and the pair $(A,B)$ is called \emph{$\varepsilon$-regular}, if for all sets $X \subseteq A$ and $Y \subseteq B$ with $|X| \ge \varepsilon |A|$ and $|Y| \ge \varepsilon |B|$ we have $|d(A,B)-d(X,Y)| \le \varepsilon$.

We will use the following well known result, that follows from definitions.

\begin{lemma}[Minimum Degree Lemma]
	\label{lem:MDL}
	Let $(A,B)$ be an $\eps$-regular pair with $d(A,B)=d$.
	Then, for every $Y \subseteq B$ with $|Y| \ge \eps |B|$, the number of vertices from $A$ with degree into $Y$ less than $(d-\eps)|Y|$ is at most $\eps |A|$.
\end{lemma}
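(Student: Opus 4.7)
The plan is to prove the Minimum Degree Lemma by a short contradiction argument that applies the $\eps$-regularity condition directly. Let me define the ``bad'' set
\[ X = \bigl\{ a \in A : \deg(a,Y) < (d-\eps)|Y| \bigr\}, \]
and aim to show $|X| < \eps |A|$.

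Suppose for contradiction that $|X| \ge \eps |A|$. Since we also have $|Y| \ge \eps |B|$ by hypothesis, the pair $(X,Y)$ meets the size thresholds demanded by $\eps$-regularity of $(A,B)$. Therefore the definition of $\eps$-regularity applies and gives
\[ |d(X,Y) - d(A,B)| \le \eps, \]
so in particular $d(X,Y) \ge d - \eps$, which means $e(X,Y) \ge (d-\eps)|X||Y|$.

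On the other hand, summing the degree bound over $X$ yields
\[ e(X,Y) = \sum_{x \in X} \deg(x,Y) < |X| \cdot (d-\eps)|Y|, \]
by the very definition of $X$. This contradicts the previous lower bound on $e(X,Y)$, so the assumption $|X| \ge \eps |A|$ must be false, proving the lemma.

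Since the two inequalities on $e(X,Y)$ are strict versus non-strict in the right direction, no edge cases arise. There is no real obstacle here; the only thing to be careful about is to set up the comparison so that the strict inequality coming from the definition of the bad set $X$ contradicts the non-strict lower bound produced by regularity, which is exactly what happens above.
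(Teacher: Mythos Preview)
Your proof is correct and is exactly the standard argument; the paper itself does not spell out a proof, merely noting that the lemma ``follows from definitions,'' and your contradiction via the bad set $X$ is the intended one-line justification.
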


With $d \in [0,1]$, a pair $(A,B)$ is called \emph{$(\varepsilon,d)$-super-regular} if, for all sets $X \subseteq A$ and $Y \subseteq B$ with $|X| \ge \varepsilon |A|$ and $|Y| \ge \varepsilon |B|$, we have $d(X,Y) \ge d$ and $\deg(a) \ge d|B|$ for all $a \in A$ and $\deg(b) \ge d|A|$ for all $b \in B$.

The following result is also well known and follows from the definition of super-regularity and Lemma~\ref{lem:MDL}.

\begin{lemma}[Super-regular Pair Lemma]
	\label{lem:superreg}
	Let $(A,B)$ be an $\eps$-regular pair with $d(A,B)=d$.
	Then there exists $A'\subseteq A$ and $B' \subseteq B$ with $|A'|\ge (1-\eps) |A|$ and $|B'| \ge (1-\eps) |B|$ such that $(A',B')$ is a $(2\eps,d-3\eps)$-super-regular pair. 
\end{lemma}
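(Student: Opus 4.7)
The plan is to obtain $A'$ and $B'$ by simply deleting from $A$ and $B$ the vertices with too small cross-degree, and then to verify the three defining properties of $(2\eps, d-3\eps)$-super-regularity. Concretely, I would define
\[ A' := \{a \in A : \deg(a, B) \ge (d-\eps)|B|\} \quad \text{and} \quad B' := \{b \in B : \deg(b, A) \ge (d-\eps)|A|\}. \]
Two applications of Lemma~\ref{lem:MDL} (once in each direction, taking $Y=B$ and then $Y=A$) immediately yield the size bounds $|A'| \ge (1-\eps)|A|$ and $|B'| \ge (1-\eps)|B|$.

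Next I would verify the minimum-degree requirement. For $a \in A'$, at most $|B \setminus B'| \le \eps|B|$ of its at least $(d-\eps)|B|$ neighbours in $B$ can fall outside $B'$, giving $\deg(a, B') \ge (d-2\eps)|B| \ge (d-3\eps)|B'|$, and the bound for $b \in B'$ is entirely symmetric.

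For the density condition, the key observation is that any $X \subseteq A'$ with $|X| \ge 2\eps|A'|$ still satisfies $|X| \ge 2\eps(1-\eps)|A| \ge \eps|A|$ (for $\eps \le 1/2$; the statement is vacuous otherwise), and analogously for $Y \subseteq B'$. Hence the original $\eps$-regularity of $(A,B)$ applies to the pair $(X,Y)$ and yields $d(X,Y) \ge d - \eps \ge d - 3\eps$, as required.

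There is no genuine obstacle in this argument; the whole lemma amounts to bookkeeping. The only point meriting care is matching the three constants simultaneously: the slack $\eps$ built into the definition of $A'$ and $B'$ must absorb both the $\eps|B|$ loss incurred when restricting a vertex's neighbourhood from $B$ to $B'$, and the $\eps$ loss in edge density coming from $\eps$-regularity, which is why the conclusion loses a factor of two in the regularity parameter and an additive $3\eps$ in the density.
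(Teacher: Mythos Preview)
Your proposal is correct and follows exactly the approach the paper indicates: it states that the lemma ``follows from the definition of super-regularity and Lemma~\ref{lem:MDL}'' without giving further details, and your argument carries out precisely this, deleting the low-degree vertices identified by Lemma~\ref{lem:MDL} and verifying the super-regularity conditions directly.
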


We will use the following well known degree form of the regularity lemma that can be derived from the original version~\cite{szem76}.

\begin{lemma}[Degree form of Szemer\'edi's Regularity Lemma \cite{simon96}]
	\label{lem:reg}
	For every $\varepsilon>0$ and integer $t_0$ there exists an integer $T>t_0$ such that for any graph $G$ on at least $T$ vertices and $d \in [0,1]$ there is a partition of $V(G)$ into $t_0 < t+1 \le T$ sets $V_0,\dots,V_{t}$ and a subgraph $G'$ of $G$ such that
	\begin{enumerate}[label=\upshape(P\arabic*)]
		\item \label{prop:size} $|V_i| = |V_j|$ for all $1 \le i,j \le t$ and $|V_0|\le \varepsilon |V(G)|$,
		\item \label{prop:degree} $\deg_{G'}(v) \ge \deg_G(v) - (d+\varepsilon) |V(G)|$ for all $v \in V(G)$,
		\item \label{prop:indepen} the set $V_i$ is independent in $G'$ for $1 \le i \le t$,
		\item \label{prop:regular} for $1 \le i < j \le t$ the pair $(V_i,V_j)$ is $\varepsilon$-regular in $G'$ and has density either $0$ or at least $d$.
	\end{enumerate}
\end{lemma}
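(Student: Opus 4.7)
The plan is to derive the lemma from the original (non-degree) Szemer\'edi Regularity Lemma~\cite{szem76}. Given $\varepsilon > 0$ and $t_0$, I would first pick an auxiliary parameter $\varepsilon_0 \ll \varepsilon$, apply the original lemma with parameter $\varepsilon_0$ and lower bound $\max(t_0, 1/\varepsilon_0)$, and obtain an equitable vertex partition $W_0 \cup V_1 \cup \cdots \cup V_t$ in which all but at most $\varepsilon_0\binom{t}{2}$ of the cluster pairs are $\varepsilon_0$-regular. The final exceptional class $V_0$ will be obtained by enlarging $W_0$ in two cleaning steps described below, and $G' \subseteq G$ will be defined by deleting every edge that lies inside some cluster $V_i$, touches $V_0$, lies in an irregular pair, or lies in a regular pair of density less than $d$. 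Properties~\ref{prop:size},~\ref{prop:indepen} and~\ref{prop:regular} are then immediate from this construction, so the bulk of the work is property~\ref{prop:degree}.

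The difficulty with~\ref{prop:degree} is that it asks for a per-vertex degree bound, whereas the original Regularity Lemma only controls the total number of irregular pairs and the density of each regular pair, not the distribution of these quantities among clusters or vertices. My remedy is a two-stage cleanup. First, since the total number of irregular pair-incidences is at most $2\varepsilon_0\binom{t}{2}$, at most $\sqrt{\varepsilon_0}\,t$ clusters lie in more than $\sqrt{\varepsilon_0}\,t$ irregular pairs; I move each such \emph{bad} cluster entirely into $V_0$. Second, by applying the definition of $\varepsilon_0$-regularity to the set of vertices with atypically large degree into a given partner (a symmetric analogue of Lemma~\ref{lem:MDL}), in each $\varepsilon_0$-regular pair $(V_i, V_j)$ of density $d_{ij}$ at most $\varepsilon_0 |V_i|$ vertices of $V_i$ have degree into $V_j$ exceeding $(d_{ij}+\varepsilon_0)|V_j|$. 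A counting argument over all (vertex, pair) incidences then shows that at most $\sqrt{\varepsilon_0}\,n$ vertices are \emph{exceptional} for more than $\sqrt{\varepsilon_0}\,t$ regular pairs, and I add these vertices to $V_0$ as well, so that $|V_0| = O(\sqrt{\varepsilon_0})\,n \le \varepsilon n$.

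To finish, I would verify~\ref{prop:degree} by decomposing the edges deleted at a fixed vertex $v \in V_i$ with $i \ge 1$ into four types: edges inside $V_i$ (at most $|V_i| \le \varepsilon n$), edges to $V_0$ (at most $\varepsilon n$), edges to clusters $V_j$ forming irregular pairs with $V_i$ (at most $\sqrt{\varepsilon_0}\,t$ such partners by the first cleanup, contributing $O(\sqrt{\varepsilon_0})\,n$ in total), and edges in regular pairs of density below~$d$. The last class splits into the at most $\sqrt{\varepsilon_0}\,t$ exceptional pairs for $v$, each contributing trivially at most $|V_j|$, and the remaining typical pairs, where regularity yields $\deg(v, V_j) \le (d+\varepsilon_0)|V_j|$; summing $|V_j|$ over all typical partners gives at most $n$, so this contribution is at most $(d+\varepsilon_0)n$. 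Choosing $\varepsilon_0$ small enough in terms of $\varepsilon$ delivers $\deg_{G'}(v) \ge \deg_G(v) - (d+\varepsilon)n$, as desired. The essential obstacle throughout is exactly the gap between the average-case control provided by the original Regularity Lemma and the worst-case per-vertex conclusion demanded by~\ref{prop:degree}, which the two-stage cleanup is designed to bridge.
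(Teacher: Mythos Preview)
The paper does not actually prove this lemma; it is quoted as the well-known degree form of Szemer\'edi's Regularity Lemma with a citation to~\cite{simon96}, and the surrounding text only remarks that it can be derived from the original version~\cite{szem76}. So there is no in-paper argument to compare your proposal against, and your outline follows the standard route for this derivation.

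That said, there is one genuine error in your construction. You define $G'$ by deleting, among other things, every edge that touches $V_0$. This forces $\deg_{G'}(v)=0$ for every $v\in V_0$, so \ref{prop:degree} fails whenever a vertex you placed in $V_0$ has $\deg_G(v)>(d+\varepsilon)n$ --- and your two cleanup steps move entire clusters and individual exceptional vertices into $V_0$ with no control on their $G$-degrees. Correspondingly, your verification of \ref{prop:degree} only treats vertices $v\in V_i$ with $i\ge 1$ and says nothing about $v\in V_0$. The fix is simply not to delete edges incident to $V_0$: the lemma imposes no condition on such edges (properties \ref{prop:indepen} and \ref{prop:regular} concern only $V_1,\dots,V_t$), and with this change every $v\in V_0$ retains all of its edges, so \ref{prop:degree} is trivial there while your accounting for $v\in V_i$, $i\ge 1$, goes through unchanged.

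Two smaller points also deserve a sentence each. First, moving individual exceptional vertices to $V_0$ disturbs the equal cluster sizes required by \ref{prop:size}; note that your own incidence count, restricted to a single cluster, shows each $V_i$ loses at most a $\sqrt{\varepsilon_0}$-fraction of its vertices, so trimming all surviving clusters to the common minimum size costs only another $O(\sqrt{\varepsilon_0})n$ vertices in $V_0$. Second, after these removals the surviving pairs are only $O(\varepsilon_0)$-regular and their densities shift by $O(\sqrt{\varepsilon_0})$; this is harmless provided $\varepsilon_0$ is taken small enough in terms of $\varepsilon$, but it should be said explicitly.
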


The sets $V_1, \dots, V_t$ are also called clusters and we refer to $V_0$ as the set of exceptional vertices.
A partition $V_0, \dots, V_t$ which satisfies~\ref{prop:size}--\ref{prop:regular} is called an \emph{$(\eps,d)$-regular partition} of $G$.
Given this partition, we define the \emph{$(\eps,d)$-reduced graph} $R$ for $G$, that is, the graph on vertex set $[t]$, in which $ij$ is an edge if and only if $(V_i,V_j)$ is an $\eps$-regular pair in $G'$ and has density at least $d$.

\subsection{Squares of paths in randomly perturbed graphs}

We also need the following result that allows us to find multiple copies of the square of a short path in randomly perturbed graphs.

\begin{lemma}
	\label{lem:sublinear_square_paths}
	For all integers $k\ge 2$ and $t \ge 1$, there exist $C, \gamma>0$ such that the following holds for any $0 \le m \le \gamma n$ and any $n$-vertex graph $G$ of minimum degree $\delta(G) \ge m$ and maximum degree $\Delta(G) \le \gamma n$.
	For $p \ge C (\log n)^{1/(2k-3)} n^{-(k-1)/(2k-3)}$, a.a.s.~the perturbed graph $G \cup G(n,p)$ contains $t m + t$ pairwise vertex-disjoint copies of the square of a path on $k+1$ vertices.
\end{lemma}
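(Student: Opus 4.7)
The plan is to combine a matching in $G$, extracted from the min-degree and max-degree conditions, with a $P_{k-1}^2$-factor in $G(n,p)$ obtained from Theorem~\ref{thm:JKV}, using a two-round exposure of the random edges.

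First, since $e(G) \ge \delta(G) n/2 \ge mn/2$ and $\Delta(G) \le \gamma n$, a greedy matching argument yields a matching $M \subseteq E(G)$ of size at least $m/(4\gamma)$, which is at least $2(tm+t)$ for $\gamma$ sufficiently small (say $\gamma \le 1/(16t)$). Next, I would split $G(n,p) \supseteq \Gamma_1 \cup \Gamma_2$ into two independent binomial random graphs of density roughly $p/3$ each. For $k \ge 3$, since $P_{k-1}^2$ is strictly $1$-balanced with $m_1(P_{k-1}^2) = (2k-5)/(k-2) < m_1(P_k^2) = (2k-3)/(k-1)$, Theorem~\ref{thm:JKV} applied to $\Gamma_1$ restricted to $V \setminus V(M)$ yields $\Omega(n)$ vertex-disjoint copies of $P_{k-1}^2$, collected in a family $\mathcal{F}$. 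For $k = 2$, $P_{k-1}^2$ is a single vertex and we simply take $\mathcal{F} = V \setminus V(M)$.

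For each matched edge $e = uv \in M$ and each copy $F \in \mathcal{F}$ on vertices $(y_1, \dots, y_{k-1})$ satisfying $y_1, y_2 \in N_G(u)$ (a condition that automatically provides the $G$-edges $u y_1, u y_2$), the tuple $(v, u, y_1, \dots, y_{k-1})$ forms a copy of $P_{k+1}^2$ in $G \cup \Gamma_2$ provided only the single bridge edge $v y_1$ lies in $G \cup \Gamma_2$. Building the auxiliary bipartite graph $H$ on $M \cup \mathcal{F}$ with edges indicating valid pairings, Chernoff concentration on the independent random edges of $\Gamma_2$ together with a K\"onig--Hall argument is intended to produce a matching in $H$ of size $\ge tm + t$, giving the desired vertex-disjoint copies of $P_{k+1}^2$.

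The main obstacle is verifying that $H$ has a matching of size $\ge tm + t$, which requires a careful count of the pairings $(e, F)$ with $y_1, y_2 \in N_G(u)$ combined with concentration of $\Gamma_2$ near each such configuration. When $m$ is small (so few such pairings exist near each matched edge), I would instead apply Janson's inequality directly to $G(n,p)$ alone: the expected number of copies of $P_{k+1}^2$ in $G(n,p)$ at this density is $\Theta\bigl(n^{(2k-4)/(2k-3)}(\log n)^{(2k-1)/(2k-3)}\bigr) \to \infty$, which easily yields the $O(t)$ disjoint copies needed in the small-$m$ regime.
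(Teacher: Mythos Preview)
Your plan has a genuine gap in the ``large $m$'' regime, and the ``small $m$'' regime is mis-stated.

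\textbf{The alignment problem.} Theorem~\ref{thm:JKV} only asserts the \emph{existence} of some $P_{k-1}^2$-factor $\cF$ in $\Gamma_1$; it gives no control over how the copies sit relative to the sets $N_G(u)$. There is no reason why any $F\in\cF$ should have its first two vertices in $N_G(u)$, and you cannot appeal to concentration here because $\cF$ is not a random object once revealed. Even granting the most optimistic heuristic count, the expected degree of an edge $e=uv\in M$ in your auxiliary graph $H$ is at most $p\cdot|N_G(u)|\le p\gamma n$, since the copies in $\cF$ are vertex-disjoint (so at most $|N_G(u)|$ of them can have $y_1\in N_G(u)$, let alone $y_1,y_2$). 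For $k\ge 3$ this is $o(1)$, and for $k=2$ it is $\Omega(1)$ only once $m\gtrsim n/\log n$. So the K\"onig--Hall step cannot produce $tm+t$ disjoint copies except in a sliver near $m=\gamma n$.

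\textbf{The small-$m$ case.} You need $tm+t$ disjoint copies, not $O(t)$. The purely random count $\Theta\bigl(n^{(2k-4)/(2k-3)}(\log n)^{(2k-1)/(2k-3)}\bigr)$ therefore only handles $m$ up to roughly $n^{(2k-4)/(2k-3)}(\log n)^{2/(2k-3)}$ (and this step requires a careful Janson plus union-bound argument over the $n^{O(m)}$ histories, not just $\EE[X]\to\infty$). This leaves the entire range $n^{(2k-4)/(2k-3)}\lesssim m\lesssim n/\log n$ uncovered by either of your two cases.

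The paper's proof avoids these obstacles by a different decomposition of $P_{k+1}^2$: instead of one $G$-edge plus a random bridge plus a random $P_{k-1}^2$, it uses one high-degree vertex $v$ together with several $G$-edges from $v$ into its neighbourhood, and then finds the remaining random structure \emph{inside} a set of size $\Theta(m)$ (namely $N_G(v)$, or mostly inside it for $k\ge 4$). The point is that the random part lives in a set whose size scales with $m$, so Janson gives success probability $1-n^{-\omega(m)}$, which beats the $n^{O(m)}$ union bound over greedy histories precisely when $m$ exceeds the threshold above. Your decomposition forces the random $P_{k-1}^2$ to live in a global factor, which kills this scaling.
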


The case $k=2$ and $t=1$ was already covered in~\cite[Theorem~2.4]{triangle_paper}.
The general proof is similar and is given in Appendix~\ref{sec:appendix}.

\section{Proof overview}
\label{sec:overview}

In this section we will sketch the proof of Theorem~\ref{thm:main_small} and discuss a more 
general stability version of it.
As already explained in the introduction, the cases $\alpha=0$ and $\alpha \ge \frac12$ follow from known results.
Moreover, the case $\alpha = \frac 12$ will follow from the monotonicity of the perturbed threshold, once we will have determined the perturbed threshold in the range $\alpha < \frac12$.
Therefore from now on, we can fix an integer $k \ge 2$ and assume $\alpha \in \left[\frac{1}{k+1},\frac1k\right)$. 

We start by discussing the idea of our embedding strategy and explaining how this leads to the threshold probabilities given in Theorem~\ref{thm:main_small}.
We then turn to the arguments for the lower bound on $\hat{p}_\alpha$ and afterwards split the upper bound into two theorems (Theorems~\ref{thm:small_extremal} and~\ref{thm:small_non-extremal}) depending on the structure of the dense graph $G_\alpha$: an extremal case and a non-extremal one.
Here, Theorem~\ref{thm:small_non-extremal} provides a stability version of Theorem~\ref{thm:main_small}.

\subsection{Strategy.}
We recall that, given the square of the path $v_1, v_2,\ldots, v_k$, we define its end-tuples as $(v_2, v_1)$ and $(v_{k-1}, v_k)$.
Let $G$ be any $n$-vertex graph with minimum degree $\alpha n$ and $\alpha \in \left[ \frac{1}{k+1},\frac{1}{k} \right)$.
Our goal is to find the square of a Hamilton cycle $C_n^2$ in the perturbed graph $G \cup G(n,p)$ and therefore we will use a \emph{decomposition} of $E(C_n^2)$ into `deterministic edges' (to be embedded to $G$) and `random edges' (to be embedded to $G(n,p)$).
To get the square of a path we would like vertex disjoint copies $F_1,\dots,F_t$ of $P_k^2$ in the random graph $G(n,p)$ such that the following holds.
For each $i=1,\dots,t-1$, if we denote by $(y_i, x_i)$ and $(u_i, w_i)$ the end-tuples of $F_i$, then $w_ix_{i+1}$ is also an edge in $G(n,p)$.
Moreover, there exist $t-1$ additional vertices $v_1,\dots,v_{t-1}$ such that, for $i=1,\dots,t-1$, all four edges $v_iu_{i}, v_iw_{i}, v_ix_{i+1}, v_iy_{i+1}$ are edges in $G$.
This gives the square of a path on $t(k+1)-1$ vertices with edges from $G\cup G(n,p)$ (c.f.~Figure~\ref{fig_HC2_0}).

\begin{figure}[htpb]
	
	\begin{tikzpicture}[scale=0.65,
		point/.style ={circle,draw=none,fill=#1,
			inner sep=0pt, minimum width=0.2cm,node contents={}}
		]
		%Square of Hamilton path
		\node (w3) at (11,{2-sqrt(3)})   [point=black, label=below:$w_3$];
		\node (u3) at (10,2)   [point=black, label=above:$u_3$];
		\node (x3) at (9,{2-sqrt(3)})   [point=black, label=below:$y_3$];
		\node (y3) at (8,2)   [point=black, label=above:$x_3$];
		\node (v2) at (7,{2-sqrt(3)})   [point=red, label=below:$v_2$];
		\node (w2) at (6,2)   [point=black, label=above:$w_2$];
		\node (u2) at (5,{2-sqrt(3)})   [point=black, label=below:$u_2$];
		\node (x2) at (4,2)   [point=black, label=above:$y_2$];
		\node (y2) at (3,{2-sqrt(3)})   [point=black, label=below:$x_2$];	
		\node (v1) at (2,2)   [point=red, label=above:$v_1$];
		\node (w1) at (1,{2-sqrt(3)})   [point=black, label=below:$w_1$];
		\node (u1) at (0,2)   [point=black, label=above:$u_1$];
		\node (x1) at (-1,{2-sqrt(3)})   [point=black, label=below:$y_1$];
		\node (y1) at (-2,2)   [point=black, label=above:$x_1$];
		
		\draw[blue,dashed,line width=1pt]
		(w3)--(u3)--(x3)--(y3)--(u3)
		(x3)--(w3)
		(y3)--(w2)--(u2)--(x2)--(y2)--(u2)
		(x2)--(w2)
		(y2)--(w1)--(u1)--(x1)--(y1)--(u1)
		(x1)--(w1);
		\draw[black,line width=1pt]
		(x3)--(v2)--(y3)
		(w2)--(v2)--(u2)
		(x2)--(v1)--(y2)
		(w1)--(v1)--(u1);
	\end{tikzpicture}
	\captionsetup{font=footnotesize}
	\captionof{figure}{The square of a path with end-tuples $(y_1,x_1)$ and $(u_3,w_3)$ with our decomposition into random (dashed blue) and deterministic (black) edges for $k=4$ and $t=3$.}
	\label{fig_HC2_0}
\end{figure}

Note that by requiring the edge $w_{t}x_{1}$ from $G(n,p)$ and adding another vertex $v_t$ joined to $u_{t},w_{t},x_{1},y_{1}$ in $G$, we get the square of a cycle on $t(k+1)$ vertices. 
However for divisibility reasons this may not cover all $n$ vertices. 
Hence in order to find the square of a Hamilton cycle and for some additional technical reasons, our proof(s) will allow some of $F_1,\dots,F_t$ to be the squares of paths of different lengths. 

This decomposition of $C_n^2$ already justifies the probabilities that appear in Theorem~\ref{thm:main_small}:
indeed, $n^{-(k-1)/(2k-3)}$ is the threshold in $G(n,p)$ for a linear number of copies of $P_k^2$ (this follows from Lemma~\ref{lem:embedding}), while $n^{-(k-1)/(2k-3)} (\log n)^{1/(2k-3)}$ is the threshold in $G(n,p)$ for the existence of a $P_k^2$-factor (this follows from Theorem~\ref{thm:JKV}).

\subsection{Lower bounds}
\label{subsec:lower_bounds}

For any $\alpha \in (0,1/2)$, let $H_\alpha$ be the complete bipartite $n$-vertex graph with vertex classes $A$ and $B$ of size $\alpha n$ and $(1-\alpha) n$, respectively.
The following two propositions provide lower bounds on $\hat{p}_\alpha$ for $\alpha \in (\tfrac{1}{k+1},\tfrac 1k)$ and $\alpha = \tfrac{1}{k+1}$.
Their proofs are standard and we only give a sketch.

\begin{proposition}
	\label{prop:lower_bound_non_extr}
	Let $\alpha \in (\tfrac{1}{k+1},\tfrac{1}{k})$.
	Then there exists $0 < c < 1$ such that $H_\alpha \cup G(n,p)$ a.a.s.~does not contain a copy of $C_n^2$, provided $p \le c n^{-(k-1)/(2k-3)}$.
\end{proposition}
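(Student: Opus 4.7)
The plan is to argue by contradiction, using that the $B$-part of $H_\alpha$ (of size $(1-\alpha)n$) is independent, so every edge of any prospective $C_n^2\subseteq H_\alpha\cup G(n,p)$ with both endpoints in $B$ must be supplied by $G(n,p)$. The two-step strategy will be to first show deterministically that any such Hamilton-cycle ordering forces a linear number of distinct copies of $P_k^2$ to lie inside $G(n,p)$, and then to rule this out a.a.s.\ by a second-moment bound on the number of $P_k^2$-copies in $G(n,p)$ at $p\le cn^{-(k-1)/(2k-3)}$.

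For the deterministic step, I would fix any cyclic Hamilton ordering $v_1,\ldots,v_n$ realising such a square and let $f(i)$ denote the number of $j\in\{0,\ldots,k-1\}$ with $v_{i+j}\in A$ (indices cyclic). Then $\sum_i f(i)=k|A|=k\alpha n$, and since $k\alpha<1$ at least $(1-k\alpha)n$ indices $i$ satisfy $f(i)=0$. Each such $i$ provides $k$ consecutive Hamilton-cycle vertices lying inside the independent set $B$; the pairwise distance-$\le 2$ adjacencies in $C_n^2$ among them form a copy of $P_k^2$ which must therefore appear in $G(n,p)$. Since different indices $i$ yield different vertex sets, any such $C_n^2$ forces at least $(1-k\alpha)n$ pairwise distinct copies of $P_k^2$ in $G(n,p)$.

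For the probabilistic step, letting $Y$ count copies of $P_k^2$ in $G(n,p)$, one has $\EE[Y]=O(c^{2k-3}n)$. Because $P_k^2$ is strictly $1$-balanced with $m_1(P_k^2)=(2k-3)/(k-1)$, the standard variance computation gives $\Var[Y]=O(\EE[Y])$, and Chebyshev's inequality then yields $Y\le 2\EE[Y]$ a.a.s. Choosing $c$ small enough in terms of $k$ and $1-k\alpha$ makes $2\EE[Y]<(1-k\alpha)n$, contradicting the deterministic lower bound. The main delicacy lies exactly here: since $\EE[Y]=\Theta(n)$ is only a constant factor below the forced count, Markov's inequality alone is insufficient and one genuinely needs linear-variance concentration, which is precisely what strict $1$-balancedness of $P_k^2$ provides.
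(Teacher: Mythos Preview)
Your argument is correct and follows the same two-step strategy as the paper's sketch: force linearly many copies of $P_k^2$ inside $B$, then show $G(n,p)$ a.a.s.\ has too few. Your deterministic count is in fact sharper and cleaner than the paper's --- by averaging the window indicator $f(i)$ over length-$k$ cyclic intervals you directly obtain $(1-k\alpha)n$ forced copies, whereas the paper first bounds the number of $P_{k+1}^2$'s in $B$ by $o(n)$ (first moment) to control how often consecutive $A$-vertices can be more than $k+1$ apart, and only reaches $\tfrac{1-k\alpha}{2}n$. On the probabilistic side, the paper appeals to Vu's upper-tail inequality for small subgraph counts to get at most $2cn$ copies of $P_k^2$; your explicit Chebyshev argument via the strict $1$-balancedness of $P_k^2$ is a self-contained alternative yielding the same conclusion without the external reference. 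Both routes land on the same threshold up to an absolute constant.
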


\begin{proof}[Sketch of the proof of Proposition~\ref{prop:lower_bound_non_extr}]
	Let $1/(k+1) < \alpha < 1/k$, take $0 < c < (1/k - \alpha)/2$ and observe that in $B$ there are a.a.s.~at most $2cn$ copies of $P_k^2$ (by an upper tail bound on the distribution of small subgraphs~\cite{vu_2001}).
	Assume for a contradiction that there is an embedding of $C_n^2$ into $H_\alpha \cup G(n,p)$.
	Then $H_\alpha \cup G(n,p)$ must contain $n/k$ vertex-disjoint copies of $P_k^2$ and only at most $|A|=\alpha n$ of them have a vertex in $A$.
	Therefore there must be at least $n/k - \alpha n > 2cn$ copies of $P_k^2$ in $B$, where the inequality follows from the choice of $c$.
	This gives a contradiction.
\end{proof}

When $\alpha=\tfrac{1}{k+1}$, an additional $\log$-factor is needed, which essentially comes from the fact that we need a $P_{k}^2$-factor in $G(n,p)$.
For example, suppose $k=2$ and consider $H_{1/3} \cup G(n,p)$.
Observe that $H_{1/3}$ contains an independent set $B$ of size $2n/3$ and, in the range of $p$ of our interest, a.a.s.~$G(n,p)[B]$ contains few triangles.
Therefore, in order for a copy of $C_n^2$ to appear in $H_{1/3} \cup G(n,p)$, there has to be a perfect matching in $G(n,p)[B]$.

\begin{proposition}
	\label{prop:lower_bound_extr}
	Let $\alpha = \tfrac{1}{k+1}$.
	Then $H_\alpha \cup G(n,p)$ a.a.s.~does not contain a copy of $C_n^2$, provided $p \le \frac{1}{4k} n^{-(k-1)/(2k-3)} (\log n)^{1/(2k-3)}$.
\end{proposition}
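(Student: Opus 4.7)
The proof is by contradiction, reducing the question to a comparison between two a.a.s.~estimates in $\Gamma := G(n,p)$: an upper bound on the number of vertices of $B$ not covered by disjoint copies of $P_k^2$ extracted from the cyclic arc structure of any embedding, and the classical JKV-style lower bound on the number of vertices belonging to no copy of $P_k^2$. Monotonicity in $p$ lets us restrict to the worst case $p=\tfrac12(\log n)^{1/(2k-3)}n^{-(k-1)/(2k-3)}$.

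\textbf{Structure of an embedding.} Suppose $H_\alpha\cup\Gamma\supseteq C_n^2$, and let $A',B'\subseteq V(C_n)$ be the preimages of $A,B$. Traversing $C_n$ cyclically, the $m:=|A'|=n/(k+1)$ vertices of $A'$ split $B'$ into $m$ arcs of lengths $\ell_1,\ldots,\ell_m\geq 2$ with $\sum\ell_i=km$. Since $H_\alpha[B]=\emptyset$, the $i$-th arc induces a vertex-disjoint copy of $P_{\ell_i}^2$ inside $\Gamma[B]$.

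\textbf{Forcing arcs to have length $k$.} A direct first-moment calculation gives
\[
\EE\bigl[\#P_\ell^2\text{ in }\Gamma[B]\bigr]=\Theta\!\left(n^{(3(k-1)-\ell)/(2k-3)}(\log n)^{(2\ell-3)/(2k-3)}\right),
\]
so a.a.s.~no squared path in $\Gamma[B]$ has length $\geq 3k-2$, and a.a.s.~$\#P_{k+1}^2=o(n)$ by Markov. Let $a,b,c$ be the number of arcs with $\ell_i$ equal to, less than, or greater than $k$. Long arcs are vertex-disjoint and each contains a copy of $P_{k+1}^2$, hence $c=o(n)$; coupling this with $\sum(\ell_i-k)=0$ and the bound $\ell_i\leq 3k-3$ for long arcs gives
\[
b\leq\sum_{\ell_i<k}(k-\ell_i)=\sum_{\ell_i>k}(\ell_i-k)\leq (2k-3)c=o(n).
\]
Consequently $a\geq m-o(n)$, and the number of vertices of $B'$ \emph{not} in a length-$k$ arc is $k(b+c)=o(n)$.

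\textbf{Isolation count and contradiction.} Call $v\in B$ \emph{$P_k^2$-isolated} if no copy of $P_k^2$ in $\Gamma[B]$ passes through $v$. Any vertex lying in an arc of length $\geq k$ is contained in a $P_k^2$ (take $k$ consecutive arc-vertices), so every $P_k^2$-isolated vertex must sit in a short arc, of which there are at most $(k-1)b=o(n)$ many. On the other hand $\EE[\#P_k^2\ni v]\sim K\cdot 2^{-(2k-3)}\log n$ with $K=\tfrac{k}{2}\bigl(\tfrac{k}{k+1}\bigr)^{k-1}\leq k/2$; by FKG applied to the decreasing events ``this fixed $P_k^2$ is absent from $\Gamma[B]$'' this gives $\PP[v\text{ is }P_k^2\text{-isolated}]\geq n^{-(1+o(1))K/2^{2k-3}}$, and a standard second-moment computation concentrates the number of $P_k^2$-isolated vertices in $B$ around its mean $\Omega\bigl(n^{1-K/2^{2k-3}}\bigr)$. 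The main (indeed, essentially only) obstacle is the calibration inequality $K/2^{2k-3}<1/(2k-3)$, equivalently $k(2k-3)<2^{2k-2}$, a routine polynomial-vs-exponential check that holds for every $k\geq 2$. This yields $n^{1-K/2^{2k-3}}\gg n^{(2k-4)/(2k-3)}$, contradicting the $o(n)$ bound on the short-arc vertices from the previous paragraph.
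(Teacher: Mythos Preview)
Your proof is correct and follows essentially the same approach as the paper's sketch: bound the number of copies of $P_{k+1}^2$ in $\Gamma[B]$ by the first moment, show that in any embedding of $C_n^2$ this forces almost all of the $m$ arcs in $B$ to have length exactly $k$, and then derive a contradiction from the (second-moment) lower bound on the number of $P_k^2$-isolated vertices in $B$. If anything, your write-up is more careful than the paper's sketch about the quantitative comparison, explicitly checking the calibration $K(2k-3)<2^{2k-3}$ so that the isolated count $n^{1-K/2^{2k-3}}$ beats the $n^{(2k-4)/(2k-3)+o(1)}$ bound on short-arc vertices.

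One small imprecision: the claim $\ell_i\ge 2$ is not justified --- nothing prevents two $A$-vertices from being at distance $1$ or $2$ on $C_n$, since the required edges could lie in $\Gamma[A]$ --- but you never actually use this, and the balancing identity $\sum_{\ell_i<k}(k-\ell_i)=\sum_{\ell_i>k}(\ell_i-k)$ together with the rest of your argument goes through verbatim with $\ell_i\ge 0$.
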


\begin{proof}[Sketch of the proof of Proposition~\ref{prop:lower_bound_extr}]
	Let $c=1/(4k)$. 
	A.a.s.~(by the first moment method) $B$ contains at most $n^{1-2c}$ copies of $P_{k+1}^2$ and a.a.s.~(by the second moment method) at least $n^{1-c}$ vertices from $B$ are not contained in any copy of $P_k^2$ within $B$.
	Assume for a contradiction that there is an embedding of $C_n^2$ into $H_\alpha \cup G(n,p)$.
	Then $H_\alpha \cup G(n,p)$ must contain a $P_{k+1}^2$-factor.
	Since $|B|=k|A|$, the average size of the intersection of a copy of $P_{k+1}^2$ in such a factor with $B$ would be $k$.
	However, given the restrictions above, it is not possible to cover the vertices of $G(n,p)[B]$ with a family of squares of paths whose average size is $k$.
	This gives a contradiction.
\end{proof}

\subsection{Proof of Theorem~\ref{thm:main_small} and stability}
\label{subsec:stability}

As mentioned before, for the proof of Theorem~\ref{thm:main_small} we distinguish between an extremal case, when the deterministic graph $G$ is \emph{close} to $H_{1/(k+1)}$, i.e. the complete bipartite $n$-vertex graph with vertex classes of size $\frac{1}{k+1}n$ and $\frac{k}{k+1} n$, and a non-extremal case.
It turns out that the additional $(\log n)^{1/(2k-3)}$-term in the perturbed threshold at $\alpha=\frac{1}{k+1}$ is only necessary in the extremal case.
The next definition formalises what we mean by \emph{close}.

\begin{definition}[$(\alpha,\beta)$-stable]
	\label{def:stability}
	For $\alpha, \beta>0$ we say that an $n$-vertex graph $G$ is \emph{$(\alpha,\beta)$-stable} if there exists a partition of $V(G)$ into two sets $A$ and $B$ of sizes $|A|=(\alpha \pm \beta) n$ and $|B|=(1-\alpha \pm \beta) n$ such that the minimum degree of the bipartite subgraph $G[A,B]$ is at least $\tfrac 14 \alpha n$, all but at most $\beta n$ vertices from $A$ have degree at least $|B|-\beta n$ into $B$, all but at most $\beta n$ vertices from $B$ have degree at least $|A|-\beta n$ into $A$ and $G[B]$ contains at most $\beta n^2$ edges.
\end{definition}

The following theorem treats the extremal case of Theorem~\ref{thm:main_small}.

\begin{theorem}[Extremal Theorem]
	\label{thm:small_extremal}
	For every $k \ge 2$ there exist $\beta>0$ and $C>0$ such that the following holds.
	Let $G$ be any $n$-vertex graph with minimum degree at least $\tfrac{1}{k+1} n$ that is $(\tfrac{1}{k+1}, \beta)$-stable.
	Then $G \cup G(n,p)$ a.a.s.~contains the square of a Hamilton cycle, provided that $p\ge C n^{-(k-1)/(2k-3)}  (\log n)^{1/(2k-3)}$.  
\end{theorem}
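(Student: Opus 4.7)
The plan is to embed $C_n^2$ into $G \cup G(n,p)$ using the decomposition sketched in Section~\ref{sec:overview} and illustrated in Figure~\ref{fig_HC2_0}: copies of $P_k^2$ lying entirely in $B$, linked cyclically by connector vertices from $A$, with one additional random $B$-$B$ edge per gap. Since $G[A,B]$ is (after cleaning) essentially complete bipartite while $G[B]$ is very sparse, every $P_k^2$-piece must come from the random graph and essentially all of $B$ has to be covered by these pieces. The bottleneck is therefore a near-$P_k^2$-factor on $B$, for which Theorem~\ref{thm:JKV} provides the sharp threshold $p \ge C(\log n)^{1/(2k-3)} n^{-(k-1)/(2k-3)}$ appearing in the hypothesis.

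First, I would clean the bipartition. Let $A^* = \{v \in A : \deg_G(v,B) \ge |B|-\beta n\}$ and $B^* = \{v \in B : \deg_G(v,A) \ge |A|-\beta n\}$; by the stability hypothesis the exceptional set $X := V(G) \setminus (A^* \cup B^*)$ has size at most $2\beta n$, and an elementary degree count gives $|A^*| = (1 \pm O(\beta)) \frac{n}{k+1}$ and $|B^*| = (1 \pm O(\beta)) \frac{kn}{k+1}$. I would then set aside a sublinear reservoir $R \subseteq B^*$ to handle the final divisibility and parity bookkeeping. Next, using the fact that every $v \in X$ satisfies $\deg_G(v) \ge \frac{n}{k+1}$ with almost all of its neighbours in $A^* \cup B^*$, I would apply Lemma~\ref{lem:sublinear_square_paths} (or a direct Janson argument via Lemma~\ref{lem:paths2}) to construct pairwise vertex-disjoint absorbers $P_v$ ($v \in X$) in $G \cup G(n,p)$: each $P_v$ is a copy of $P_{k+1}^2$ with $v$ as an internal vertex and both end-tuples in $B^*$, so that $P_v$ behaves exactly like a $P_k^2$-piece for the subsequent chaining step.

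With the exceptional vertices absorbed, I would apply Theorem~\ref{thm:JKV} to $G(n,p)$ restricted to what remains of $B^*$ (whose cardinality has been adjusted via $R$ to be divisible by $k$) to obtain a $P_k^2$-factor there. This produces a collection of $t \approx \frac{n}{k+1}$ pieces with two end-tuples in $B^*$ each, which must then be assembled cyclically. I would set up an auxiliary directed graph on these pieces with an arc from $P$ to $P'$ whenever $G(n,p)$ supplies the end-to-end edge required by Figure~\ref{fig_HC2_0}; since this auxiliary digraph contains $\overrightarrow{G}(t,p')$ with $p' = \Theta(p)$ and $tp' \gg \log t$, Theorem~\ref{lem:directed_ham} delivers a directed Hamilton cycle. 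The $|A^*| = t$ connectors can then be assigned to the $t$ gaps via a trivial Hall-type matching, since every $A^*$-vertex is adjacent in $G$ to all but $\beta n$ vertices of $B^*$ and therefore to any prescribed four-element end-tuple subset.

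The main obstacle is the absorption phase: the exceptional vertices in $X$ may have highly skewed and heterogeneous $A$-$B$ neighbourhood distributions, and the absorbers $P_v$ must be pairwise vertex-disjoint, compatible with the $A$-$B$ chaining structure, and leave behind a set in $B^*$ whose size is \emph{exactly} divisible by $k$ so that Theorem~\ref{thm:JKV} yields a \emph{perfect} $P_k^2$-factor. Coordinating these disjoint absorbers with the reservoir $R$ to enforce the final divisibility, while simultaneously preserving the super-regularity properties of $G[A^*,B^*]$ that make the chaining phase work, is the technically delicate part of the argument.
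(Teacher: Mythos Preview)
Your overall architecture is exactly the paper's: absorb exceptional vertices into short square paths with end-tuples in $B^*$, obtain a $P_k^2$-factor on the remainder of $B$ via Theorem~\ref{thm:JKV}, chain the pieces cyclically by a directed Hamilton cycle (Theorem~\ref{lem:directed_ham}), and assign connectors from $A$ by Hall's theorem. The gaps are concentrated precisely where you flag them, but they are more serious than bookkeeping.

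First, a sublinear reservoir cannot work. The stability hypothesis only gives $|A|=(\tfrac{1}{k+1}\pm\beta)n$, so the discrepancy $|B|-k|A|$ is $\Theta(\beta n)$, and its sign matters. When $|A|>\lfloor n/(k+1)\rfloor$ no reservoir in $B^*$ can help; the paper spends $m=|A|-\lfloor n/(k+1)\rfloor$ copies of $P_{3k+2}^2$, each using three $A$-vertices, to bring $A$ down. When $|A|<\lfloor n/(k+1)\rfloor$ one needs $\Theta(\beta n)$ copies of $P_{k+1}^2$ lying entirely in $B$, and at $p=\Theta\big((\log n)^{1/(2k-3)}n^{-(k-1)/(2k-3)}\big)$ the expected number of such copies in $G(n,p)$ is only $n^{(2k-4)/(2k-3)+o(1)}=o(n)$, so $G(n,p)$ alone does not supply them. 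The paper instead exploits that $\delta(G[B])\ge \tfrac{n}{k+1}-|A|$ and invokes Lemma~\ref{lem:sublinear_square_paths}, which mixes deterministic and random edges to build these pieces; this is a genuine use of the minimum-degree hypothesis that your outline omits.

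Second, your uniform $P_{k+1}^2$ absorber breaks the exact balance required for the final matching. Writing $\cF$ for the full collection of pieces and $A_2,B_2$ for what remains, the Hall step needs $|A_2|=|\cF|$, equivalently $|B_2|=k(|A_2|-|\cF_1\cup\cF_2|)$ before the $P_k^2$-factor is taken. A piece using $a$ vertices of $A$ and $b$ of $B$ preserves this identity iff $b=k(a+1)$. Thus a bad $A$-vertex ($a=1$) must sit in a $P_{2k+1}^2$, and a bad $B$-vertex ($a=2$) in a $P_{3k+2}^2$; your $P_{k+1}^2$ with $a=1$, $b=k$ shifts the invariant by $k$ per absorber, and the accumulated linear error cannot be repaired afterwards. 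The paper chooses exactly these lengths and, crucially, arranges the part of each absorber lying in $B$ to be a graph with $m_1\le m_1(P_k^2)$ (two or three copies of $P_k^2$ joined by single edges), so it is findable by Lemma~\ref{lem:embedding} inside any linear-size common neighbourhood.
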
	

When the graph $G$ is not stable, the $(\log n)^{1/(2k-3)}$-term is not needed and the next theorem provides a stability version of Theorem~\ref{thm:main_small}.

\begin{theorem}[Stability Theorem]
	\label{thm:small_non-extremal}
	For every $k \ge 2$ and every $0 < \beta < \tfrac{1}{6k}$, there exist $\gamma>0$ and $C>0$ such that the following holds.
	Let $G$ be any $n$-vertex graph with minimum degree at least $(\tfrac{1}{k+1} - \gamma) n$ that is not $(\tfrac{1}{k+1}, \beta)$-stable.
	Then $G \cup G(n,p)$ a.a.s.~contains the square of a Hamilton cycle, provided that $p\ge C n^{-(k-1)/(2k-3)}$.  
\end{theorem}

We sketch the ideas for the proof of these two theorems in the following two subsections.
Together with the lower bounds provided in Propositions~\ref{prop:lower_bound_non_extr} and~\ref{prop:lower_bound_extr}, Theorem~\ref{thm:small_extremal} and~\ref{thm:small_non-extremal} imply Theorem~\ref{thm:main_small} for $\alpha \in [\tfrac{1}{k+1},\tfrac1k)$ with $k \ge 2$.
Indeed, fix $k \ge 2$, let $\beta_0>0$ be given by Theorem~\ref{thm:small_extremal} and take $\beta=\min\{\beta_0, \frac{1}{6k}\}$.
Let $G$ be any $n$-vertex graph with minimum degree at least $\tfrac{1}{k+1} n$.
The upper bound on the threshold for $\alpha = \tfrac{1}{k+1}$ then follows from Theorem~\ref{thm:small_extremal} or~\ref{thm:small_non-extremal}, depending on whether $G$ is $(\tfrac{1}{k+1},\beta)$-stable or not.
When $\alpha \in (\tfrac{1}{k+1},\tfrac1k)$, then, with $\beta=\tfrac15 \left(\tfrac{1}{k}-\alpha \right)>0$,
the upper bound on the threshold follows from  Theorem~\ref{thm:small_non-extremal}, since an $n$-vertex graph with minimum degree $\alpha n$ is not $(\tfrac{1}{k+1},\beta)$-stable.

\subsection{Overview of the proof of Theorem~\ref{thm:small_extremal}}
\label{sec:sketch_extremal}
For the extremal case, suppose that $G$ is an $n$-vertex $(\alpha,\beta)$-stable graph with $\alpha=\frac{1}{k+1}$ and let $p\ge C (\log n)^{1/(2k-3)} n^{-(k-1)/(2k-3)}$.
The definition of stability (Definition~\ref{def:stability}) gives a partition $A \cup B$ of $V(G)$ in which the size of $B$ is roughly $k$ times the size of $A$, the minimum degree of $G[A,B]$ is at least $\alpha n/4$ and all but few vertices of $A$ ($B$, respectively) are adjacent to all but few vertices of $B$ ($A$, respectively).
Our proof will follow three steps.

In the first step, we would like to embed copies $F_i$ of $P_k^2$ into $B$ and vertices $v_i$ into $A$, following the decomposition described above.
However, this is only possible if $|B|=k|A|$ and, therefore, we first embed squares of short paths of different lengths to ensure this divisibility condition in the remainder.
We find a family $\cF_1$ of copies of squares of paths with end-tuples in $B$, such that after removing the vertices $V_1 = \bigcup_{F \in \cF_1}V(F)$ ,
we are left with two sets $A_1=A \setminus V_1$ and $B_1=B \setminus V_1$ with $|B_1| = k(|A_1| - |\cF_1|)$.
Note that we construct the family $\cF_1$ in such a way that we have $|B_1|=k(|A_1|-|\cF_1|)$ rather than $|B_1|=k|A_1|$, because each square path in $\cF_1$ still needs to be connected into the final square of a Hamilton cycle, and for each of these connections we shall use one vertex in $A_1$.
The precise way we find $\cF_1$ depends on the sizes of $A$ and $B$, but in all cases we will ensure that the vertices in the end-tuples of each $F \in \cF_1$ are neighbours of all but few vertices of $A$.
When $|B| > \tfrac {k}{k+1} n$, the family $\cF_1$ consists of copies of $P_{k+1}^2$ inside of $B$. Its existence is guaranteed by Lemma~\ref{lem:sublinear_square_paths} using the minimum degree of $G[B]$.
When $|B| \le \tfrac {k}{k+1} n$, the family $\cF_1$ consists of copies of $P_{2k+3}^2$ with all vertices in $B$, except the $(k+1)$-st and $(k+3)$-rd, that belong to $A$.

Our second step is to cover the vertices in $A_1$ and $B_1$ that do not have a high degree to the other part.
For this we will find another family $\cF_2$ of copies of squares of paths with end-tuples in $B$.
For any vertex $v$ in $A_1$ with small degree into $B_1$, we find a copy of $P_{2k+1}^2$ with $v$ being the $(k+1)$-st vertex and all the remaining vertices belonging to~$B_1$.
Similarly, for any vertex $v$ in $B_1$ with small degree into $A_1$, we find a copy of $P_{3k+2}^2$ consisting of three copies of $P_k^2$ in $B$ connected by edges and two vertices from $A_1$, where $v$ is in the middle copy of $P_k^2$.
We need that $v$ is in the middle copy, because then we can again ensure that the end-tuples of each $F \in \cF_2$ see all but few vertices of $A$.
Moreover, with $V_2 = \bigcup_{F \in \cF_2}V(F)$ and $A_2=A_1 \setminus V_2$ and $B_2=B_1 \setminus V_2$, we have $|B_2| = k (|A_2| -|\cF_1| - |\cF_2|)$.

At this point, each of the vertices in $A_2$ ($B_2$, respectively) is adjacent to all but few vertices of $B_2$ ($A_2$, respectively) and we kept the divisibility condition intact. In the third step, we let $\cF_3$ be pairwise disjoint random copies of $P_k^2$ covering $B_2$, which is possible by Theorem~\ref{thm:JKV} with our $p$ and because $|B_2|$ is divisible by~$k$.

We let $\cF = \cF_1 \cup \cF_2 \cup \cF_3$ and, for each $F \in \cF$, denote its end-tuples by $(y_F,x_F)$ and $(u_F,w_F)$.
We now reveal additional edges of $G(n,p)$ and encode their presence in an auxiliary directed graph $\cT$ on vertex set $\cF$ as follows.
There is a directed edge $(F,F')$ if and only if the edge $w_Fx_{F'}$ appears in $G(n,p)$.
It is easy to see that all directed edges in $\cT$ are revealed with probability $p$ independently of all others and, therefore, we can find a directed Hamilton cycle $\overrightarrow{C}$ in $\cT$ with Theorem~\ref{lem:directed_ham}.
We finally match to each edge $(F,F')$ of $\overrightarrow{C}$ a vertex $v \in A_2$ such that $u_F,w_F,x_{F'},y_{F'}$ are all neighbours of $v$ in the graph $G$.
Owing to the high minimum degree conditions, that this is possible easily follows from Hall's matching theorem.
Thus we get the square of a Hamilton cycle, as wanted.

\subsection{Overview of the proof of Theorem~\ref{thm:small_non-extremal}.}
\label{sec:sketch_nonextremal}

Assume that $G$ is not $(\frac{1}{k+1},\beta)$-stable and let $p\ge C n^{-(k-1)/(2k-3)}$.
Then we apply the regularity lemma to $G$ and we use the following lemma on its reduced graph $R$.

\begin{lemma}[Lemma~4.4 in~\cite{triangle_paper}]
	\label{lem:stable_cluster}
	For any integer $k \ge 2$ and $0<\beta<\tfrac{1}{12}$, there exists $d>0$ such that the following holds for any $0<\eps<d/4$, $4 \beta \le \alpha \le \tfrac 13$ and $t \ge \tfrac{10}{d}$ .
	Let $G$ be an $n$ vertex graph with minimum degree $\delta(G) \ge (\alpha -\tfrac12 d)n$ that is not $(\alpha,\beta)$-stable and let $R$ be the $(\eps,d)$-reduced graph for some $(\eps,d)$-regular partition $V_0,\dots,V_t$ of $G$.
	Then $R$ contains a matching $M$ of size $(\alpha+2kd)t$.
\end{lemma}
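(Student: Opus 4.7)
I will prove the contrapositive: assume the maximum matching in $R$ has fewer than $(\alpha+2kd)t$ edges, and deduce that $G$ is $(\alpha,\beta)$-stable. The first step is to transfer the minimum-degree hypothesis from $G$ to $R$. Using properties~\ref{prop:size}--\ref{prop:regular} and $|V_i|=(1\pm\eps)n/t$, a routine count shows $\delta(R)\ge(\alpha-2d)t$: each vertex loses at most $(d+\eps)n$ neighbours when passing to $G'$, and density-$0$ pairs together with $V_0$ cost an additional $(d+2\eps)n$, so the fraction of clusters adjacent to $V_i$ in $R$ is at least $\alpha-2d$.

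\textbf{Gallai--Edmonds analysis.} Let $M$ be a maximum matching with $|M|<(\alpha+2kd)t$ and take the Gallai--Edmonds decomposition $V(R)=D\sqcup A^*\sqcup C$, where $D$ is the set of vertices missed by some maximum matching, $A^*=N_R(D)\setminus D$, and $C$ is the remainder. Recall: there are no edges between $C$ and $D$; each component of $D$ is factor-critical; and the deficit $t-2|M|=c(D)-|A^*|$, hence $c(D)-|A^*|>(1-2\alpha-4kd)t$. Let $s$ be the number of singleton components of~$D$. For a singleton $\{v\}$, $N_R(v)\subseteq A^*$, so if $s\ge 1$ then $|A^*|\ge(\alpha-2d)t$; on the other hand, any $v$ in a non-singleton component $C_v$ satisfies $\deg_R(v)\le|A^*|+|C_v|-1$, giving $|A^*|\ge(\alpha-2d)t-|C_v|+1$. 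Combining these with $|D|\le t$, the bound $|D|\ge s+3(c(D)-s)$, and $\alpha\le\tfrac13$ forces $s\ge 1$ (otherwise, since $c(D)\ge(1-2\alpha-4kd)t\ge\tfrac13(1-12kd)t$, components are concentrated on size $3$, yielding $|D|>3(1-\alpha)t-O(kd)t>t$, a contradiction). It then pins down $|A^*|=(\alpha\pm O(kd))t$, $|D|=(1-\alpha\pm O(kd))t$, $|C|\le O(kd)t$, and $|D|-s\le O(kd)t$: almost all of $D$ is singletons.

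\textbf{Extraction and verification.} Let $A_0=A^*$ and $B_0=D\cup C$, and obtain $A,B$ by (i)~moving to $B$ any vertex of $A_0$ with many $A_0$-internal $R$-neighbours (only $O(kd)t$ such vertices exist, by double counting $e(R[A^*])$ against $\delta(R)\ge(\alpha-2d)t$ and $|A^*|\le(\alpha+O(kd))t$) and (ii)~rebalancing by swapping at most $\beta t/4$ further clusters so that $|A|=(\alpha\pm\beta/2)t$. Then $e(R[B])\le O((kd)^2)t^2$, since edges within $B$ arise only from the $O(kd)t$ non-singleton vertices of $D$, from $C$, and from the transferred vertices, with no $C$--$D$ edges. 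Lift to $G$ by $\tilde A=\bigcup_{i\in A}V_i\cup V_0'$ and $\tilde B=V(G)\setminus\tilde A$, splitting $V_0$ to preserve sizes. Verify Definition~\ref{def:stability}: $|\tilde A|,|\tilde B|$ satisfy the required bounds; $e_G(\tilde B)\le e(R[B])(n/t)^2+(d+\eps)n^2+\eps n^2\le\beta n^2$ once $d$ is small enough relative to $\beta/k^2$; vertices of $\tilde B$ with $\deg_G(v,\tilde B)>\alpha n/4$ number at most $O((kd)^2/\alpha)n\le\beta n$, and moving them to $\tilde A$ costs nothing in the size estimate; the remaining vertices of $\tilde B$ satisfy $\deg_G(v,\tilde A)\ge\alpha n/4$ directly from $\delta(G)\ge(\alpha-d/2)n$; vertices of $\tilde A$ have enough neighbours in $\tilde B$ by Lemma~\ref{lem:MDL} applied to the regular pairs between clusters of $A$ and singleton clusters of $D$ (of density $\ge d$). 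The ``most vertices see most of the other side'' conditions likewise follow from Lemma~\ref{lem:MDL}. This contradicts the hypothesis that $G$ is not $(\alpha,\beta)$-stable.

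\textbf{Main obstacle.} The delicate step is the Gallai--Edmonds counting, particularly when $\alpha$ approaches $\tfrac13$: the simultaneous constraints $|D|\le t$, $c(D)\ge(1-2\alpha-4kd)t$, and the factor-critical structure of $D$-components are jointly tight, and the factor $2k$ in the matching bound is the precise amount of slack needed to accommodate both the regularity losses (linear in $d$) and the downstream extraction of the stability witness. A secondary subtlety is the trick of swapping $O(kd)t+\beta n$ vertices between $\tilde A$ and $\tilde B$ to enforce the pointwise minimum-degree and degree-to-other-side conditions in Definition~\ref{def:stability}, which cannot be concluded from $\delta(G)$ alone but only from $\delta(G)$ together with the sparsity of $G[\tilde B]$.
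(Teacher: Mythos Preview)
The paper does not actually prove this lemma: it quotes it from \cite{triangle_paper} (where the matching size is $(\alpha+2d)t$) and remarks that the present version ``follows from a straightforward adaptation of its proof, which in turn builds on ideas from \cite{BMS_cover}.'' So there is no in-paper proof to compare against. Your contrapositive strategy via the Gallai--Edmonds decomposition is the standard route for such ``small matching $\Rightarrow$ stable'' statements and is almost certainly what the cited proof does; your deficiency count $c(D)-|A^\ast|>(1-2\alpha-4kd)t$ and the conclusions $|A^\ast|=(\alpha\pm O(kd))t$, $|C|,|D|-s=O(kd)t$ are all correct.

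There is, however, a genuine gap in your verification step. Two of your justifications are wrong as stated. First, you claim that only $O(kd)t$ clusters of $A^\ast$ have many $A^\ast$-internal $R$-neighbours ``by double counting $e(R[A^\ast])$ against $\delta(R)$''. There is no a priori bound on $e(R[A^\ast])$: the minimum-degree condition is a lower bound, and $R[A^\ast]$ could be complete without contradicting anything you have derived. Second, you invoke Lemma~\ref{lem:MDL} to get that vertices of $\tilde A$ have degree close to $|\tilde B|$ into $\tilde B$; but that lemma only yields degree roughly $d|\tilde B|$ into each regular partner, not $|\tilde B|-\beta n$.

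Both issues are repaired by the same double count, but on the $G$-level and from the $\tilde B$ side. Take $\tilde B=\bigcup_{j\in S}V_j$ (singleton clusters) and $\tilde A=V(G)\setminus\tilde B$. Every $v\in\tilde B$ lies in a cluster whose $R$-neighbourhood is contained in $A^\ast$, so $\deg_{G'}(v,\tilde B)=0$ and hence $\deg_G(v,\tilde A)\ge(\alpha-2d)n$. Summing,
\[
e_G(\tilde A,\tilde B)=\sum_{v\in\tilde B}\deg_G(v,\tilde A)\ge|\tilde B|(\alpha-2d)n,
\]
while trivially $e_G(\tilde A,\tilde B)\le|\tilde A||\tilde B|-\ell\beta n$ if $\ell$ vertices of $\tilde A$ have $\deg_G(v,\tilde B)<|\tilde B|-\beta n$. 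Since $|\tilde A|\le(\alpha+O(kd))n$, this gives $\ell\le O(kd/\beta)n\le\beta n$ for $d$ small enough in terms of $\beta$ and $k$; the same count with threshold $\alpha n/4$ gives at most $O(kd)n$ vertices violating the minimum-degree condition, and those can be moved to $\tilde B$. With this correction in place your outline goes through.
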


In fact,~\cite[Lemma 4.4]{triangle_paper} is weaker as, under the same assumptions, it gives a matching of size $(\alpha+2d)t$, but Lemma~\ref{lem:stable_cluster} follows from a straightforward adaptation of its proof, which in turn builds on ideas from~\cite{BMS_cover}.
With Lemma~\ref{lem:stable_cluster}, it is not hard to show that the reduced graph $R$ can be vertex-partitioned into copies of stars $K_{1,k}$ and matching edges $K_{1,1}$, such that the number of stars is not too large.
For each such star and matching edge, we would like to cover their clusters with the square of a Hamilton path and then connect these square paths, while covering the exceptional vertices, in order to get the square of a Hamilton cycle.
However, since we want to avoid the additional $\log$-term in the edge density of the random graph, for this strategy to work in the randomly perturbed graph, we need that in each star the centre cluster is larger than the other clusters.
Moreover, to ensure that we can connect the squares of Hamilton paths, we need to setup some connections between the stars and matching edges.

Therefore, we first remove some vertices from the leaf clusters of each star to make it unbalanced and ensure that all pairs are super-regular.
Then we label the stars and matching edges arbitrarily as $Q_1,\dots,Q_s$, and for $i=1,\dots,s$ find a copy $F_i$ of $P_6^2$ to connect $Q_i$ and $Q_{i+1}$ (where we identify the index $s+1$ with $1$ and the index $0$ with $s$).
More precisely, for each star $Q_i$, one end-tuple of $F_{i-1}$ and one of $F_i$ belong to the centre cluster of $Q_i$.
Moreover, for each matching edge $Q_i$, each of its two clusters contains exactly one end-tuple, one from $F_{i-1}$ and the other from $F_i$.
We will refer to these squares of paths as the connecting (squares of) paths. 
Let $V_0$ be the set of vertices no longer contained in any of the stars or matching edges.
We cover each $v \in V_0$ by appending $v$ to one of the connecting paths.
Here we use that any vertex $v \in V_0$ has degree at least $(\tfrac{1}{k+1}-\alpha)n$ and, as we do not have too many stars, the vertex $v$ has also many neighbours in clusters which are not centres of stars.
This is crucial, because it allows us to ensure that the relations between the sizes of the sets in any star are suitable for an application of the following lemma, which is the main technical ingredient in the proof.

\begin{lemma}[Star Cover Lemma]
	\label{lem:multipartite2}
	For any $k\ge 2$ and any $0<\delta'\le d<1$ there exist $\delta_0,\delta_1,\eps > 0$ with $\delta'\ge \delta_0>2\delta_1>\eps$ and $C>0$ such that the following holds.
	Let $V,U_1,\dots,U_k$ be pairwise disjoint sets such that $|V|=n+4$, $(1-\delta_0) n \le |U_1|=\dots=|U_k| \le (1-\delta_1)n$ and $n-|U_1| \equiv -1 \pmod{3k-1}$.
	Suppose that $(V,U_i)$ are $(\varepsilon,d)$-super-regular pairs with respect to a graph $G$ for each $i=1,\dots,k$, and $(x,x')$ and $(y,y')$ are two tuples from $V$ such that both tuples have $\tfrac12 d^2n$ common neighbours in $U_i$ for each $i=1,\dots,k$.
	Furthermore, let $G(V,p)$ and $G(U_1,\dots,U_k,p)$ be random graphs with $p\ge C n^{-(k-1)/(2k-3)}$.
	
	Then a.a.s.~there exists the square of a Hamilton path in $G[V,U_1,\dots,U_k] \cup G(V,p) \cup G(U_1,\dots,U_k,p) \cup \{xx',yy'\}$ covering $V \cup U_1 \cup \dots \cup U_k$ with end-tuples $(x,x')$ and $(y,y')$.
\end{lemma}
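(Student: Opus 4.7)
I would construct the desired square of a Hamilton path as a sequence of \emph{blocks} $F_0,F_1,\ldots,F_{t+1}$, each a copy of $P_k^2$ whose $j$-th vertex lies in $U_j$ (so every edge of a block is multipartite and is therefore available from $G(U_1,\dots,U_k,p)$), with $F_0$ attached to the prescribed end-tuple $(x,x')$ and $F_{t+1}$ attached to $(y,y')$. Consecutive blocks are linked either by a \emph{single connector} $v\in V$, or by a \emph{double connector} $(v,v')$ joined by an edge of $G(V,p)$; in both cases the edges between the connectors and the adjoining blocks are supplied by the super-regular pairs $G[V,U_i]$. A balancing computation shows that the path must use $|U_1|$ blocks in total and, writing $m:=n-|U_1|$, exactly $m+1$ of the $|U_1|-1$ internal gaps must be double connectors so that every interior vertex of $V$ is used precisely once. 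The divisibility hypothesis $m\equiv 0\pmod{6k-2}$ makes the vertex counts line up cleanly with the factor step below.

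\textbf{Main steps.} \emph{(i) End blocks.} Using the assumption that $(x,x')$ has at least $\tfrac12 d^2n$ common $G$-neighbours in each $U_i$, I would greedily pick $u_j\in N_G(x)\cap N_G(x')\cap U_j$ for $j=1,\dots,k$, and then apply Lemma~\ref{lem:paths2} inside these common-neighbour sets to obtain the required $U$-to-$U$ edges of $F_0$ from $G(U_1,\dots,U_k,p)$; construct $F_{t+1}$ analogously. \emph{(ii) $P_k^2$-factor.} Remove the $2k$ vertices used in the end blocks and apply Theorem~\ref{thm:JKV} to the multipartite random graph $G(U_1',\dots,U_k',p)$ (noting $m_1(P_k^2)=\tfrac{2k-3}{k-1}$) to find a $P_k^2$-factor $\mathcal{F}$ covering all remaining $U$-vertices; the bound $p\ge Cn^{-(k-1)/(2k-3)}$ is exactly the right threshold, and the required divisibility of $|U_1'|$ comes from the $6k-2$ hypothesis. \emph{(iii) Ordering and connection.} Setting $\mathcal{F}^+:=\{F_0\}\cup\mathcal{F}\cup\{F_{t+1}\}$, build an auxiliary directed graph $\mathcal{T}$ on $\mathcal{F}^+$ with arc $(F,F')$ iff the right end-tuple of $F$ and the left end-tuple of $F'$ share a common $G$-neighbour in $V$. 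The super-regularity of each pair $(V,U_i)$ guarantees that any such candidate set has size $\Omega(n)$, so that $\mathcal{T}$ is essentially complete. Revealing a fresh portion of $G(V,p)$, one pre-selects $m+1$ arcs to carry double connectors; a Hall-type matching combined with Theorem~\ref{lem:directed_ham} then produces a directed Hamilton path in $\mathcal{T}$ from $F_0$ to $F_{t+1}$ with distinct connector vertices assigned to each arc.

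\textbf{Main obstacle.} The key technical difficulty lies in step (iii): one must simultaneously order $\mathcal{F}^+$ into a Hamilton path \emph{and} assign to each arc a distinct single- or double-connector from the interior of $V$, using exactly $m+1$ doubles so as to exhaust $V$. The single-connector compatibility is cleanly encoded by $\mathcal{T}$, but weaving in the correct number of double connectors without over- or under-spending $V$ requires a careful two-stage matching: first choose $m+1$ gap positions that admit valid double connectors (Chernoff applied to $G(V,p)$ on candidate pairs makes this feasible), then Hall-match the remaining gaps to single connectors. Feasibility of the Hall condition follows from the linear-sized candidate sets produced by the super-regularity of $(V,U_i)$ together with the fact that all but $o(n)$ blocks in $\mathcal{F}^+$ have end-tuples with typical $G$-degree into $V$.
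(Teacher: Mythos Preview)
Your proposal has a fatal gap in step~(ii): Theorem~\ref{thm:JKV} requires $p\ge(\log n)^{1/(2k-3)}n^{-(k-1)/(2k-3)}$, but Lemma~\ref{lem:multipartite2} only assumes $p\ge Cn^{-(k-1)/(2k-3)}$ with no $\log$-factor. At this probability a perfect $P_k^2$-factor in $G(U_1',\dots,U_k',p)$ need not exist: a.a.s.\ some vertices of $U_1\cup\dots\cup U_k$ lie in no transversal copy of $P_k^2$. Avoiding the $\log$-term is the entire point of this lemma (and the reason the hypothesis $|U_i|\le(1-\delta_1)n$ forces $V$ to be strictly larger than the $U_i$). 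The paper's proof handles this by using Lemma~\ref{lem:aux_reg} to obtain only a \emph{near}-factor $\cH$ of size $(1-\eps')|U_1|$, and then absorbing each leftover vertex $z\in U_1\cup\dots\cup U_k$ via a gadget that spends four extra vertices from $V$ and a $P_4$ inside $G(V,p)$; the surplus $n-|U_1|$ is exactly what pays for this, and the $6k-2$ divisibility is what makes the bookkeeping between the number of leftover vertices, the number of unused copies in $\cH$, and the surplus in $V$ come out exactly.

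There is a second, related gap in step~(iii). Between two consecutive blocks $F=u_1\dots u_k$ and $F'=u_1'\dots u_k'$ joined by a single connector $v\in V$, the square of the path also needs the edge $u_ku_1'$, which is a \emph{random} edge of $G(U_1,\dots,U_k,p)$; your auxiliary digraph $\cT$ encodes only the deterministic condition on $v$. Once you include this random edge, $\cT$ becomes a random digraph with edge probability $p$, and for $k=2$ this is $Cn^{-1}$, below the $\log n/n$ threshold of Theorem~\ref{lem:directed_ham}. The paper therefore cannot, and does not, invoke Theorem~\ref{lem:directed_ham} here; instead it runs a random-greedy depth-first search to build a long (not Hamilton) directed path $D$ in $\bar F$, and simultaneously proves (Claim~\ref{claim:H_matching2}) that the edges of $D$ can be Hall-matched to \emph{any} size-$t$ subset $V'\subseteq V$. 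Your two-stage matching sketch does not address either of these points.
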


This implies that for any star $Q_i$ we can connect the end-tuples of $F_{i-1}$ and $F_i$ which belong to the centre cluster of $Q_i$, while covering all vertices in the clusters of $Q_i$.
We emphasise again that, to avoid $\log$-terms, it is crucial that the centre cluster is larger than the leaf clusters.
Similarly for the matching edges we use the following lemma.
\begin{lemma}[Pair Cover Lemma]
	\label{lem:bipartite}
	For any $0<d<1$ there exist $\eps>0$ and $C>0$ such that the following holds for sets $U,V$ with $|V|=n$ and $\tfrac34 n \le |U| \le n$.
	Let $(U,V)$ be an $(\varepsilon,d)$-super-regular pair with respect to a graph $G$ and $(x,x')$ and $(y,y')$ be tuples from $V$ and $U$, respectively, such that the vertices from the tuples have $\tfrac12 d^2n$ common neighbours in $U$ and $V$, respectively.
	If $G(U,p)$, $G(V,p)$ are random graphs with $p\ge C n^{-1}$, then a.a.s.~there exists the square of a Hamilton path in $G[U,V] \cup G(U,p) \cup G(V,p) \cup \{xx',yy'\}$ covering $U \cup V$ with end-tuples $(x,x')$ and $(y,y')$.
\end{lemma}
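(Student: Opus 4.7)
The plan is to construct the square of a Hamilton path in a nearly alternating $V,U,V,U,\dots$ pattern, in which the cross-part (distance-$1$ and distance-$2$) edges are supplied by the super-regular pair $(U,V)$ and the within-part edges come from $G(V,p)\cup G(U,p)$. Since $|V|\ge|U|$, the pattern must contain $|V|-|U|$ ``double-$V$'' defect positions at which two consecutive $V$-vertices occur, each contributing one distance-$1$ within-$V$ edge; apart from these defects and the prescribed end-tuple positions, the path strictly alternates between $V$ and $U$.

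First, I would reveal $G(V,p)$ and $G(U,p)$ and, using Chernoff's inequality (Lemma~\ref{lem:chernoff}), identify the small (sub-linear in $n$ for $C$ large) set of vertices in each part whose random within-part degree is atypically low. Then, to handle the end-tuples, I would use the $\tfrac{1}{2}d^2 n$ common neighbours of $(x,x')$ in $U$ and of $(y,y')$ in $V$ together with Lemma~\ref{lem:paths2} applied with $k=2$ to build short constant-length initial and terminal segments of the square of a Hamilton path through $(x,x')$ and $(y,y')$; these segments use up only a negligible portion of $U$ and $V$ and reduce the task to producing a square of a Hamilton path on the remaining vertices with flexible end-tuples.

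The third and main step is to build the body of the path. Using a P\'{o}sa-rotation-style argument in the super-regular pair $(U,V)$---facilitated by the Minimum Degree Lemma (Lemma~\ref{lem:MDL}) and the super-regularity downgrading of Lemma~\ref{lem:superreg}---I would produce an alternating Hamilton-cycle-like structure in $(U,V)$, exploiting the rotational freedom to align the induced orderings of $V$ and of $U$ with the within-part random edges available for the distance-$2$ slots in $G(V,p)$ and $G(U,p)$. Vertices of low within-part random degree are steered into the defect positions (in the case of $V$) or placed adjacent to the end-tuple segments, so that their within-part demand is either absorbed by the defect edge or replaced by cross-part edges coming from the super-regular pair. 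I would then glue the end-tuple segments from the previous step onto this main body.

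The main obstacle is that $p\ge Cn^{-1}$ is a sparse regime in which the within-part random graphs a.a.s.\ contain a positive (though $o(1)$ as $C\to\infty$) fraction of low-degree vertices, while every interior vertex of the alternating run needs at least one within-part random neighbour to supply its distance-$2$ edge. Coordinating the placement of these ``bad'' vertices at the flexible defect or end-tuple positions, while simultaneously realising the alternating bipartite Hamilton cycle and matching the sparse random edges to the precisely required distance-$2$ slots, is the central technical step; the super-regularity of $(U,V)$ provides the rotational freedom that makes this coordination feasible, and the common-neighbour hypothesis on the end-tuples is what allows the initial and terminal segments to be embedded in the first place.
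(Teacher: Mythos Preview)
Your proposal has a genuine gap at its core. In an alternating pattern $v_1,u_1,v_2,u_2,\dots$ the distance-$2$ edges of the square of the path are precisely $v_iv_{i+1}$ and $u_iu_{i+1}$; thus the within-$V$ distance-$2$ edges must form a Hamilton path on the $V$-vertices in $G(V,p)$, and similarly for $U$. At $p\ge Cn^{-1}$ with $C$ a fixed constant this is impossible: $G(V,p)$ a.a.s.\ has about $e^{-C}|V|$ isolated vertices, a \emph{linear} number (your claim that the low-degree set is sub-linear for large $C$ is incorrect---the fraction $e^{-C}$ tends to $0$ as $C\to\infty$, but for any fixed $C$ the count is $\Theta(n)$). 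None of these isolated-in-$G(V,p)$ vertices can sit at an interior position of the alternating run. Your absorption mechanisms do not fix this: a double-$V$ defect $\dots,u,v,v',u',\dots$ still requires the within-$V$ edge $vv'$, so a vertex with within-$V$ degree $0$ cannot occupy either defect slot; the end-tuple segments are of constant length and can absorb only $O(1)$ vertices; and the number of defects is $|V|-|U|$, which may even be $0$. So there is a linear deficit that your scheme cannot cover, and a P\'osa-rotation argument cannot manufacture within-part edges that do not exist.

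The paper's proof takes an entirely different route and avoids this obstacle by \emph{not} using a $V,U$-alternating decomposition at all. It randomly partitions $V$ into $V_1\cup U_2\cup W_2$ and $U$ into $V_2\cup U_1\cup W_1$ so as to obtain two $(\eps',\tfrac18 d)$-super-regular copies of $K_{1,2}$, namely $(V_1;U_1,W_1)$ and $(V_2;U_2,W_2)$, with the centre slightly larger than the leaves. It then applies Lemma~\ref{lem:multipartite2} with $k=2$ to each triple to get the square of a Hamilton path on it, and glues the two paths via a $K_4$ between suitable tuples $(z,z')\in V_1$ and $(w,w')\in V_2$. The point is that in the structure of Lemma~\ref{lem:multipartite2} the centre-cluster vertices need \emph{no} random edges incident to them (they are connected only via deterministic super-regular edges), so the linear number of random-graph isolates is harmless. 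Your decomposition forces every interior vertex to participate in a random within-part edge, which is exactly what fails at $p=Cn^{-1}$.
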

Together this gives the square of a Hamilton cycle in $G \cup G(n,p)$.
We will give the proof of Lemma~\ref{lem:multipartite2} and~\ref{lem:bipartite} in Section~\ref{sec:auxiliary}.

\section{Proof of Theorem~\ref{thm:small_extremal}}
\label{sec:extremal}

\begin{proof}[Proof of Theorem~\ref{thm:small_extremal}]
	
	Given an integer $k \ge 2$, we let $C_2$ and $0<\gamma \le 1$ be given by Lemma~\ref{lem:sublinear_square_paths} for input $k$ and $t=k+1$.
	We let $C_4$ be given by Theorem~\ref{lem:directed_ham} and set $C=4 C_2+ 8 k C_4 + 8$.
	Next, we let $0<\beta \le \tfrac{1}{100 k^3} \gamma$.
	Given $n$, let $0 \le a \le k$ be such that $n = (k+1) \lfloor \frac{n}{k+1} \rfloor + a$ and $p \ge C (\log n)^{1/(2k-3)} n^{-(k-1)(2k-3)}$.
	We reveal a subgraph of $G(n,p)$ in four rounds $G_i \sim G(n,\tfrac 14 p)$ for $i=1,2,3,4$.
	By Lemma~\ref{lem:embedding} and a union bound over all graphs on at most $4k$ vertices, we can a.a.s.~assume that 
	\begin{align}
		\label{random_graph_G_1}
		G_1 \text{ contains a copy of the graph $F$ in any vertex-set of size at least $\beta n$,} 
	\end{align}
	where $F$ is any graph on at most $4k$ vertices with $m_1(F) \le m_1(P_k^2) = \tfrac{2k-3}{k-1}$.

	Let $G$ be an $n$-vertex graph with minimum degree at least $\frac{1}{k+1} n$ that is $(\frac{1}{k+1},\beta)$-stable.
	Then there exists a partition of $V(G)$ into $A$ and $B$ that satisfies Definition~\ref{def:stability}.
	As outlined in Section~\ref{sec:sketch_extremal} our proof will consist of three steps.
	We will successively build parts of the square of a Hamilton cycle, first covering some vertices to balance the partition, then covering vertices of low degree to the other side and, then, covering the remaining vertices.
	Finally we will connect these parts into the square of a Hamilton cycle.
	
	\textbf{Balancing the partition.}
	Our goal is to find a family $\cF_1$ of pairwise disjoint copies of squares of paths with end-tuples in $B$, such that each end-tuple has at least $|A|-8k^2\beta n$ common neighbours in $A$ and the size of the set $V_1 = \bigcup_{F \in \cF_1}V(F)$ is smaller than $3k^2 \beta n$.
	Moreover, we will guarantee that after removing the vertices of $V_1$ we are left with two sets 
	\begin{align}
		\label{eq:F_1}
		A_1=A \setminus V_1 \quad \text{and} \quad  B_1=B \setminus V_1 \quad \text{such that} \quad  |B_1| = k(|A_1| - |\cF_1|)\, .
	\end{align}
	
	We distinguish between the cases $|A| = \lfloor \frac{n}{k+1} \rfloor + m$ with $1 \le m \le \beta n$ and $|A| = \lfloor \frac{n}{k+1} \rfloor - m$ with $0 \le m \le \beta n$.
	Suppose first that $|A| = \lfloor \frac{n}{k+1} \rfloor + m$ for some $1 \le m \le \beta n$.
	In this case we want $|\cF_1|=m$ and the family $\cF_1$ will consist of $m-1$ copies of $P_{3k+2}^2$ and one copy of $P_{3k+2+a}^2$, such that for each of these $m$ copies, exactly three vertices are in $A$, both end-tuples are in $B$ and each end-tuple has at least $|A|-2\beta n$ common neighbours in $A$.
	
	We can do this greedily in $G \cup G_1$.
	Assume that during this process we have to find a copy of $P_{3k+2}^2$ or $P_{3k+2+a}^2$, i.e. a copy of $P_{3k+b}^2$ for some $2 \le b \le k+2$, such that the above conditions are satisfied.
	There are three vertices $v_1, v_2$ and $v_3$ in $P_{3k+b}^2$, such that none of them is in an end-tuple of $P_{3k+b}^2$, they do not induce a triangle in $P_{3k+b}^2$ and the subgraph $H=P_{3k+b}^2 \setminus \{v_1,v_2,v_3\}$ satisfies $m_1(H) \le m_1(P_k^2)$ (see Figure~\ref{fig_connection_subgraph}).
	We can avoid an induced triangle because there are at least $3k+b - 4 \ge 4$ vertices to choose from that are not in end-tuples, and guarantee the bound on the density because, when distributing the three vertices evenly, the longest square of a path in $H$ has at most $\lceil (3k+b-3)/4 \rceil \le k$ vertices.
	We remark that we can always ask $\{v_1,v_2,v_3\}$ to be an independent set in $P_{3k+b}^2$ when $k>2$.

	\begin{figure}[htpb]
		
		\begin{tikzpicture}[scale=0.65,
			point/.style ={circle,draw=none,fill=#1,
				inner sep=0pt, minimum width=0.2cm,node contents={}}
			]
			%Square of Hamilton path
			\node (v14) at (11,{2-sqrt(3)})   [point=black];
			\node (v13) at (10,2)   [point=black];
			\node (v12) at (9,{2-sqrt(3)})   [point=black];
			\node (v11) at (8,2)   [point=red, label=above:$v_3$];
			\node (v10) at (7,{2-sqrt(3)})   [point=black];
			\node (v9) at (6,2)   [point=black];
			\node (v8) at (5,{2-sqrt(3)})   [point=red, label=below:$v_2$];
			\node (v7) at (4,2)   [point=black];
			\node (v6) at (3,{2-sqrt(3)})   [point=black];	
			\node (v5) at (2,2)   [point=black];
			\node (v4) at (1,{2-sqrt(3)})   [point=red, label=below:$v_1$];
			\node (v3) at (0,2)   [point=black];
			\node (v2) at (-1,{2-sqrt(3)})   [point=black];
			\node (v1) at (-2,2)   [point=black];
			
			\draw[blue,dashed,line width=1pt]
			(v3)--(v2)--(v1)--(v3)--(v5)--(v6)--(v7)--(v5)
			(v7)--(v9)--(v10)--(v12)--(v13)--(v14)--(v12);
			\draw[black,line width=1pt]
			(v2)--(v4)--(v3)
			(v5)--(v4)--(v6)--(v8)--(v7)
			(v9)--(v8)--(v10)--(v11)--(v9)
			(v12)--(v11)--(v13);
		\end{tikzpicture}
		\captionsetup{font=footnotesize}
		\captionof{figure}{Subgraph $H$ (dashed blue) obtained from $P_{3k+b}^2$ after removing three vertices $v_1,v_2,v_3$ (red) for $b=k+2$ and $k=3$. The $1$-density of $H$ is the same as $P_k^2$.}
		\label{fig_connection_subgraph}
	\end{figure}
	
	Then we find a copy of $P_{3k+b}^2$ by embedding the vertices $v_1, v_2,v_3$ in $A$ and the other vertices in $B$ in the following way.
	Let $A' \subseteq A$ be the set of vertices of $A$ that have degree at least $|B|-\beta n$ into $B$ and have not yet been covered, and observe that $|A'| \ge |A| - \beta n - 3|\cF_1| \ge \beta n$.
	Then, since $m_1(P_3)=1  \le m_1(P_k^2)$ and given~\eqref{random_graph_G_1}, the random graph $G_1[A']$ contains a path on three vertices $u_1,u_2,u_3$.
	Next, let $B' \subseteq B$ be the set of vertices of $B$ that have degree at least $|A|-\beta n$ into $A$, are common neighbours of $u_1,u_2,u_3$ and have not yet been covered, and observe that $|B'| \ge |B| - \beta n - 3 \beta n - (3k+k+2-3)|\cF_1| \ge \beta n$.
	Since $m_1(H)=m_1(P_k^2)$, using again~\eqref{random_graph_G_1}, the random graph $G_1[B']$ contains a copy of $H$, that together with the vertices $u_1,u_2,u_3$ and some edges from $G$ gives a copy of $P_{3k+b}^2$.
	In particular, since each vertex in the end-tuples of $P_{3k+b}^2$ is embedded in $B'$, then, by definition of $B'$, each end-tuple has at least $|A|-2\beta n$ common neighbours in $A$.
	In view of~\eqref{eq:F_1}, we get that $|A_1| = \lfloor \frac{n}{k+1} \rfloor - 2m$,
	\[ |B_1|=n- |A| - (3k-1) m - a = k \left\lfloor \frac{n}{k+1} \right\rfloor - 3k m = k (|A_1| - |\cF_1|) \, , \]
	and $|V_1| = (m-1)(3k+2)+3k+2+a \le (4k+3)m \le 3k^2 \beta n$.
	
	Now suppose that $|A| = \lfloor \frac{n}{k+1} \rfloor - m$ for some $0 \le m \le \beta n$.
	In this case, the family $\cF_1$ will consist of some copies of $P_{k+1}^2$ and some copies of $P_{2k+1}^2$, such that, for each copy, all vertices are in $B$ and each end-tuple has at least $|A|-8k^2\beta n$ common neighbours in $A$; in particular, we do no touch the set $A$.
	We start from $\cF_1 = \emptyset$ and let $B^\star=\{v \in B: \deg(v,B) \ge 4 k^2 \beta n \}$ be the set of vertices in $B$ with high degree to $B$ in $G$.
	Since by Definition~\ref{def:stability} we have $e(G[B]) \le \beta n^2$, then $|B^\star| \le \frac{n}{2k^2}$.
	Moreover if $v \in B \setminus B^\star$, then $\deg(v,A) \ge \delta(G) - \deg(v,B) \ge \frac{n}{k+1}-4 k^2\beta n \ge |A| - 4 k^2\beta n$.
	With $m_0 = \max \{ m - |B^\star|,0 \}$, we have $\delta(G[B \setminus B^\star]) \ge m_0$, as 
	\[\delta(G[B \setminus B^\star]) \ge \frac{n}{k+1}-|A|-|B^\star| = \frac{n}{k+1}- \left\lfloor \frac{n}{k+1} \right\rfloor + m -|B^\star| \ge m-|B^\star|\, .\]
	Moreover, by definition of $B^\star$, we have $\Delta(G[B \setminus B^\star]) \le 4 k^2 \beta n \le \gamma |B \setminus B^\star|$, where the last inequality follows from the choice of $\beta$ and $|B \setminus B^\star| \ge (\frac{k}{k+1} - \frac{1}{2k^2})n$.
	Therefore we can use Lemma~\ref{lem:sublinear_square_paths} with parameters $k$ and $t=k+1$ and we a.a.s.~find $(k+1)m_0+a$ pairwise disjoint copies of $P_{k+1}^2$ in $(G \cup G_2)[B \setminus B^\star]$, which we add to $\cF_1$.
	All the vertices of such copies belong to $B \setminus B^\star$ and thus in particular each end-tuple has at least $|A|-8 k^2\beta n$ common neighbours in $A$.
	
	Next we want to find $m-m_0 \le \min\{|B^\star|,m\}$ copies of $P_{2k+1}^2$ in $B$ disjoint from any graph already in $\cF_1$.
	First observe that the graph $H$ obtained by taking the disjoint union of two copies of $P_{k}^2$ with the addition of the edge between the last vertex of the first copy and the first vertex of the second copy satisfies $m_1(H) \le m_1(P_k^2)$. 
	The graph $H$ will be embedded in $G_1$ and we will turn that into an embedding of $P_{2k+1}^2$, by adding a vertex and four edges of $G$.
	We can again do this greedily in $G \cup G_1$.
	First we remove vertices from $B^\star$ such that $|B^\star| = m-m_0 \le m \le \beta n$.
	Then we let $B'$ be the set of vertices in $B$ with less than $|A|-\beta n$ neighbours in $A$ and note that $|B'| \le \beta n$.
	We then pick a vertex $w$ from $B^\star$ not yet covered and denote by $N_w$ the set of neighbours of $w$ in $B \setminus (B^\star \cup B')$ in the graph $G$, that have not yet been covered.
	Then
	\[ |N_w| \ge 4 k^2 \beta n - |B'| - |B^\star| - ((k+1)m_0+a)(k+1) - (m-m_0)(2k+1-1) \ge \beta n\, , \]
	where we use that any $P_{2k+1}^2$ from $\cF_1$ has exactly one vertex in $B^\star$.
	Therefore, since $m_1(H) \le m_1(P_k^2)$ and using~\eqref{random_graph_G_1}, the random graph $G_1[N_w]$ contains a copy of this $H$, that together with $w$ and four edges from $G$, gives a copy of $P_{2k+1}^2$ as desired. 
	The end-tuples of this copy belong to $B \setminus B'$ and thus each of them has at least $|A|-2\beta n$ common neighbours in $A$.
	Once this is done, in view of~\eqref{eq:F_1}, we indeed get $|A_1| = |A| = \lfloor \frac{1}{k+1}n \rfloor - m$,
	\begin{align*}
		|B_1|=n- |A| - (k+1) ((k+1)m_0+a) - (2k+1)(m-m_0) \\
		= k \left( \left\lfloor \frac{n}{k+1} \right\rfloor - a - 2m - k m_0 \right) = k (|A_1| - |\cF_1|) \, ,
	\end{align*}
	and $|V_1| = ((k+1)m_0+a)(k+1) + (m-m_0)(2k+1) \le (k+1)^2m+k(k+1) \le 3k^2 \beta n$.
	This finishes the first step of our proof.
	
	\textbf{Covering low degree vertices.} 
	In this step the goal is to find a family $\cF_2$ of pairwise disjoint copies of $P_{2k+1}^2$ and $P_{3k+2}^2$ that cover all vertices from $A$ (respectively $B$) that do not have high degree to $B$ (respectively $A$) and such that, for each copy, each end-tuple is in $B$ and has at least $|A|-16 k^2 \beta n$ common neighbours in $A$.
	We will do this such that after removing $V_2 = \bigcup_{F \in \cF_2}V(F)$ we are left with two sets 
	\begin{align}
		\label{eq:F_2}
		A_2=A_1 \setminus V_2 \quad \text{and} \quad B_2=B_1 \setminus V_2 \quad \text{such that} \quad |B_2| = k(|A_2| - |\cF_1| - |\cF_2|)\, .
	\end{align}
	We let $A'=\{v \in A_1: \deg(v,B) \le |B| - \beta n\}$, $B'=\{v \in B_1: \deg(v,A) \le |A| - 8k^2\beta n\}$ and note that $|A'| \le \beta n$ and $|B'| \le \beta n$.
	Let $\cF_2 = \emptyset$.
	We start by taking care of the vertices in $A'$ and we cover each of them with a copy of $P_{2k+1}^2$ with all other vertices in $B_1 \setminus B'$.
	For $u \in A'$, let $N_u$ be the set of neighbours of $u$ in $B_1 \setminus B'$ in $G$ that are not yet covered by any graph in $\cF_2$ and observe that
	\[|N_u| \ge \deg(u,B) - |B \setminus B_1| - |B'| - 2k |\cF_2| \ge \tfrac{n}{4(k+1)} - 3k^2 \beta n - \beta n - 2k \beta n \ge \beta n \, ,\]
	where we used that $\deg(u,B) \ge \tfrac{n}{4(k+1)}$ as $G$ is $(\tfrac{1}{k+1},\beta)$-stable (see Definition~\ref{def:stability}).
	Note that the graph $H$ obtained by taking the union of two copies of $P_{k}^2$ with the addition of an edge between two end-vertices satisfies $m_1(H) \le m_1(P_k^2)$. 
	Therefore, using~\eqref{random_graph_G_1}, the random graph $G_1[N_u]$ contains a copy of $H$ and together with $u$ and four edges of $G$, this gives the desired copy of $P_{2k+1}^2$.
	Both end-tuples of this copy of $P_{2k+1}^2$ belong to $B_1 \setminus B'$ and, thus, have at least $|A|-16 k^2 \beta n$ common neighbours in $A$.
	We add this copy of $P_{2k+1}^2$ to $\cF_2$ and we continue until we cover all vertices of $A'$.
	
	Now we cover each vertex from $B'$ with a copy of $P_{3k+2}^2$, where each copy uses one vertex from $B'$, two vertices from $A_1 \setminus A'$ and the other $3k-1$ vertices from $B_1 \setminus B'$.
	Let $w \in B'$ and $u_1,u_2 \in A_1 \setminus A'$ be vertices not yet covered.
	We denote by $N_w$ the subset of $B_1 \setminus B'$ which contains the common neighbours of $w,u_1,u_2$ in $G$ that are not yet covered.
	Observe that the definitions of $A'$ and $B'$ give
	\begin{align*}
		|N_w| 	&\ge (\delta(G) - \deg(w,A)) - (|B|-\deg(u_1,B)) - (|B|-\deg(u_2,B)) + \\
		& \hspace{7cm} - |B \setminus B_1| - 3k |B'| - 2k |A'| \\
		&\ge \tfrac{n}{k+1} - (|A| - 8k^2 \beta n) - \beta n - \beta n - 3k^2 \beta n - 3k \beta n - 2k \beta n \ge \beta n \, .
	\end{align*}
	Similarly as above, using~\eqref{random_graph_G_1}, the random graph $G_1[N_w]$ contains a copy of a graph $H$ on $3k-1$ vertices, that together with $w,u_1,u_2$ and some edges from $G$, gives a copy of $P_{3k+2}^2$.
	The end-tuples of the copy of $P_{3k+2}^2$ belong to $B_1 \setminus B'$ and, thus, have at least $|A|-16 k^2 \beta n$ common neighbours in $A$.
	We add the copy of $P_{3k+2}^2$ to $\cF_2$ and repeat until all of $B'$ is covered. 
	We then get~\eqref{eq:F_2}, because of~\eqref{eq:F_1} and since for each graph added to $\cF_2$ the ratio of vertices removed from $A_1$ and $B_1$ is one to $2k$ or two to $3k$.
	Moreover, we have $\deg(v,B_2) \ge |B_2|-\beta n$ for $v \in A_2$, $\deg(v,A_2) \ge |A_2| - 8k^2 \beta n$ for $v \in B_2$ and $|V_2| \le 6k \beta n$, which implies $|A_2| \ge |A| - |V_1| - |V_2| \ge \tfrac{n}{2(k+1)}$.
	
	\textbf{Covering everything and connecting.}
	In this step we first cover $B_2$ with copies of $P_k^2$.
	Then, using the uncovered vertices in $A_2$, we connect all the copies of squares of paths found so far, to get the square of a Hamilton cycle.
	Observe that after the cleaning steps, $k$ divides $|B_2|$ by~\eqref{eq:F_2} and thus Theorem~\ref{thm:JKV} implies that a.a.s~the random graph $G_3[B_2]$ has a $P_k^2$-factor.
	We denote the family of such copies of $P_k^2$ by $\cF_3$ and observe that~\eqref{eq:F_2} implies $|\cF_3| = |A_2|-|\cF_1|-|\cF_2|$.
	
	We let $\cF= \cF_1 \cup \cF_2 \cup \cF_3$ be the family of all the squares of paths that we have constructed and, for each $F \in \cF$, denote the end-tuples of $F$ by $(y_F,x_F)$ and $(u_F,w_F)$.
	Recall that the end-tuples are defined such that $x_F$ and $w_F$ is the first resp.~last vertex of the path.
	Note that by construction, each pair $x_F,y_F$ and $u_F,w_F$ has at least $|A_2|- 16 k^2 \beta n$ common neighbours in $A_2$.
	We now reveal the edges of $G_4$ and construct an auxiliary directed graph $\cT$ on vertex set $\cF$ as follows.
	Given any two $F, F' \in \cF$, there is a directed edge $(F,F')$ if and only if the edge $w_Fx_{F'}$ appears in $G_4$.
	Since all directed edges in $\cT$ are revealed with probability $\tfrac 14 p$ independently of all others, $\cT$ is distributed as $\overrightarrow{G}(|\cF|,\tfrac 14 p)$.
	Then, as $|\cF| \ge \tfrac{1}{2k} n$ and $\tfrac 14 p \ge C_4 \tfrac{\log |\cF|}{|\cF|}$, there a.a.s.~is a directed Hamilton cycle $\overrightarrow{C}$ in $\cT$ by Theorem~\ref{lem:directed_ham}.
	
	In order to get the desired square of a Hamilton path, it remains to match the edges $(F,F')$ of $\overrightarrow{C}$ to the vertices $v \in A_2$ such that $u_F,w_F,x_{F'},y_{F'}$ are all neighbours of $v$ in the graph $G$.
	Observe indeed that $|E(\overrightarrow{C})|=|\cF_1|+|\cF_2|+|\cF_3|=|A_2|$, the cycle $\overrightarrow{C}$ gives a cyclic ordering of the squares of paths in $\cF$, and between consecutive copies $(F,F')$ we have the edge $w_Fx_{F'}$ (by definition of the graph $\cT$) and the matching will give a unique vertex $v$ incident to $u_F,w_F,x_{F'}$ and $y_{F'}$.
	For that we define the following auxiliary bipartite graph $\cB$ with classes $E(\overrightarrow{C})$ and $A_2$.
	There is an edge between $(F,F') \in E(\overrightarrow{C})$ and $v \in A_2$ if and only if $u_F,w_F,x_{F'},y_{F'}$ are all neighbours of $v$ in the graph $G$.
	The existence of a perfect matching in $\cB$ easily follows from Hall's condition, as the minimum degree of $\cB$ is large: by the choice of $\beta$ and the lower bound on $|A_2|$, the degree of $(F,F') \in E(\overrightarrow{C})$ in $\cB$ is at least $|A_2| - 32k^2 \beta n \ge \tfrac 12 |A_2|$ and the degree of $v \in A_2$ in $\cB$ is at least $|A_2| - 4 \beta n \ge \tfrac 12 |A_2|$.
	This finishes the proof.
\end{proof}

\section{Proof of Theorem~\ref{thm:small_non-extremal}}
\label{sec:non-extremal}

\begin{proof}[Proof of Theorem~\ref{thm:small_non-extremal}]
	Let $k \ge 2$ and $0 < \beta < \tfrac{1}{6k}$.
	We obtain $d>0$ from Lemma~\ref{lem:stable_cluster} with input $k$ and $\beta$, and let $\gamma=\tfrac{1}{k-1} d$ and $0<\delta' < 2^{-12} (k-1)^{-2} d^{2k^2}$.
	Next we obtain $\delta_0,\delta,\eps'>0$ and $C_1$ from the Star Cover Lemma (Lemma~\ref{lem:multipartite2}) with input $k$, $\delta'$ and $\tfrac12 d$ such that $\delta' \ge \delta_0 > 2\delta > \eps'$ ($\delta$ plays the role of $\delta_1$ in the lemma).
	We additionally assume that $\eps'$ is small enough for the Pair Cover Lemma (Lemma~\ref{lem:bipartite}) with input $\tfrac{1}{2} d$ and also obtain $C_2$ from this.
	Then we let $0<\eps<\tfrac{1}{8}\eps'$.
	The constant dependencies can be summarised as follows:
	\[ \eps \ll \eps' \ll \delta \ll \delta_0 \ll \delta' \ll d \ll \beta < \frac{1}{6k} \, . \]
	We now apply Lemma~\ref{lem:reg} with input $\eps$ and $t_0=\tfrac{1}{10}d$, and get $T$.
	Further, let the following parameters be given by Lemma~\ref{lem:paths2}:
	$C_3$ for input $3$ (in place of $s$), $2$ (in place of $k$) and $\delta (kT)^{-6}$ (in place of $\eta$);
	$C_4$ for input $2k$, $k$ and $\delta (kT)^{-2k^2}$;
	$C_5$ for input $4$, $2$ and $\delta (kT)^{-8}$; and
	$C_6$ for input $1$, $2$ and $\delta (kT)^{-2}$.
	Then let $C$ be large enough such that with $p \ge Cn^{-(k-1)/(2k-3)}$ the random graph $G(n,p)$ contains the union $\bigcup_{i=1}^6 G_i$, where $G_1 \sim G(n,C_1 \cdot 2(k-1)T n^{-(k-1)/(2k-3)})$, $G_2 \sim G(n,C_2 \cdot 2(k-1)T n^{-1})$, $G_3 \sim G(n,C_3 n^{-1})$, $G_4 \sim G(n,C_4 n^{-(k-1)/(2k-3)})$, $G_5 \sim G(n,C_5 n^{-1})$ and $G_6 \sim G(n,C_6 n^{-1})$.
	
	Let $G$ be an $n$-vertex graph with vertex set $V$ and minimum degree $\delta(G) \ge (\tfrac{1}{k+1} - \gamma)n$ that is not $(\tfrac{1}{k+1},\beta)$-stable.
	We apply the Regularity Lemma (Lemma~\ref{lem:reg}) to $G$ and get a subgraph $G'$ of $G$, a constant $t$ with $3 < t +1 \le T$ and an $(\eps,d)$-regular partition $V'_0,\dots,V'_t$ of $V$, satisfying~\ref{prop:size}--\ref{prop:regular}.
	Consider the $(\eps,d)$-reduced graph $R$ for $G$ and observe $\delta(R) \ge (\tfrac{1}{k+1} - 2d)t$, as, otherwise, in $G'$ there would be a vertex of degree at most $(\tfrac{1}{k+1} - 2d) t \tfrac{n}{t} + \eps n < (\tfrac{1}{k+1} - \gamma) n - (d + \eps)n$ in contradiction to~\ref{prop:degree}.
	As outlined in Section~\ref{sec:sketch_nonextremal}, the proof will consist of four steps.
	We will cover the reduced graph with copies of stars isomorphic to $K_{1,k}$ and $K_{1,1}$, connect those stars with the squares of short paths, cover the exceptional vertices and, finally, cover the whole graph with the square of a Hamilton path.
	
	\textbf{Covering $R$ with stars.}
	We start by covering the vertices of $R$ with vertex-disjoint stars, each with at most $k$ leaves; then we will turn this into a cover with copies of stars isomorphic to $K_{1,k}$ and $K_{1,1}$.
	We let $M_1$ be a largest matching in $R$ and, with Lemma~\ref{lem:stable_cluster}, we get that $|M_1| \ge \left(\tfrac{1}{k+1} + 2kd\right) t$ since $G$ is not $(\tfrac{1}{k+1},\beta)$-stable.
	By the maximality of $M_1$, the remaining vertices $X_1 =V(R) \setminus V(M_1)$ form an independent set in $R$; moreover only one endpoint of each edge in $M_1$ can be adjacent to more than one vertex from $X_1$ and the endpoints of each edge in $M_1$ cannot have different neighbours in $X_1$.
	For each $i=2,\dots,k$, we greedily pick a maximal matching $M_i$ between $X_{i-1}$ and $V(M_1)$ and we set $X_i = X_{i-1} \setminus V(M_i)$.
	Observe that, using the properties coming from the maximality of $M_1$ outlined above, the matching $M_i$ covers at least $\min \{ |X_{i-1}| , \delta(R)  \}$ vertices of $X_{i-1}$.
	Since $2|M_1| + (k-1) \delta(R) \ge t $, we have $X_k= \emptyset$ and the union $\bigcup_{i=1}^k M_i$ covers all vertices of $R$ with stars, each isomorphic to one of $K_{1,1}, \dots, K_{1,k}$.
	Moreover note that the number of $K_{1,k}$ is at most $|M_k| \le t- 2|M_1| - (k-2) \delta(R) \le \left(\tfrac{1}{k+1} - 2kd\right)t$.
	
	For simplicity we only want to work with stars isomorphic to $K_{1,k}$ and $K_{1,1}$.
	To obtain this, we split each cluster $V$ arbitrarily into $k-1$ parts $V^1, \dots, V^{k-1}$ of the same size, where we move at most $t (k-1)$ vertices to $V'_0$ for divisibility reasons.
	Note that from any $(\eps,d)$-regular pair we get $(k-1)^2$ pairs that are $(k \eps,d-\eps)$-regular.
	We denote this new partition by $V_0=V_0',V_1,\dots,V_{t'}$ with $t'=t(k-1)$ and denote the reduced graph for this partition by $R'$.
	We now show that we can cover $R'$ with copies of stars isomorphic to $K_{1,k}$ and $K_{1,1}$.
	Any copy of $K_{1,1}$ or $K_{1,k}$ in $R$ immediately gives $k-1$ copies of $K_{1,1}$ or $K_{1,k}$ in $R'$, respectively.
	Moreover given any copy $V,U_1,\dots,U_i$ of $K_{1,i}$ in $R$, with $V$ being the center cluster and $2 \le i \le k-1$, we find $i-1$ copies of $K_{1,k}$ and $k-i$ copies of $K_{1,1}$ in $R'$ in the following way.
	For each $j=1,\dots,i-1$, the clusters $V^j,U_j^1,\dots,U_j^{k-1},U_i^j$ give a copy of $K_{1,k}$ in $R'$ and, for each $j=i,\dots,k-1$, the clusters $V^j,U_i^j$ give a copy of $K_{1,1}$ in $R'$.
	Therefore, we have covered the vertices of $R'$ with a collection $\cK$ of copies of $K_{1,k}$ and $K_{1,1}$.
	We remark that we can still upper bound the number of copies of $K_{1,k}$ in $\cK$ as follows.
	Since each copy of $K_{1,i}$ in the original cover gives $i-1$ copies of $K_{1,k}$ in $\cK$, we get the largest number of copies of $K_{1,k}$ in $\cK$ when the total number of stars in the original cover is minimal. 
	The original cover of $R$ had at most $(\tfrac{1}{k+1} -2kd)t$ copies of $K_{1,k}$ and the remaining $t-(k+1)(\tfrac{1}{k+1} -2kd)t=2k(k+1)dt$ vertices can give at most $\frac{2k(k+1)dt}{k}=2(k+1)dt$ copies of $K_{1,k-1}$.
	Therefore the collection $\cK$ has at most $(k-1)(\tfrac{1}{k+1} -2kd)t+(k-2)2(k+1)dt = (\tfrac{1}{k+1}-\tfrac{4d}{k-1})t'$ copies of $K_{1,k}$.
	We let $n_0=|V_1|=\lfloor |V_1'|/(k-1) \rfloor$ be the size of the clusters in $R'$ and observe that $(1-2\eps) n/t' \le n_0 \le n/t'$.
	
	For convenience we relabel the clusters as follows.
	We let $\cI \subseteq [t']$ be the set of indices of those clusters of $R'$ that are the centre cluster in a copy of $K_{1,k}$ in $\cK$ and, for $i \in \cI$, we denote by $U_{i,1},\dots,U_{i,k}$ the clusters of $R'$ that, together with $V_i$, create a copy of $K_{1,k}$ in $\cK$.
	Then we let $\cJ \subseteq [t']$ be any set of indices with the following property: each index in $\cJ$ corresponds to a cluster of a copy of $K_{1,1}$ in $\cK$ and, for each copy of $K_{1,1}$ in $\cK$, exactly one of its clusters has its index in $\cJ$.
	Moreover, for each $i \in \cJ$, we let $U_{i,1}$ be the cluster of $R'$ that, together with $V_i$, creates a copy of $K_{1,1}$ in $\cK$.
	
	We would like to apply the Star Cover Lemma (Lemma~\ref{lem:multipartite2}) and the Pair Cover Lemma (Lemma~\ref{lem:bipartite}) to the clusters corresponding to the copies of $K_{1,k}$ and $K_{1,1}$, respectively.
	However we first need to make each regular pair super-regular and unbalance some of the clusters to allow an application of Lemma~\ref{lem:multipartite2}.
	For that we arbitrarily move $\delta n_0+4(1-\delta)$ vertices from each cluster $U_{i,j}$ with $i \in \cI$ and $j=1,\dots,k$ to $V_0$.
	Observe this ensures that $|U_{i,k}| \le (1-\delta)(|V_i|-4)$.
	Next we repeatedly use the Super-regular Pair Lemma (Lemma~\ref{lem:superreg}) and move at most $k^2\eps n_0$ vertices from each cluster to $V_0$, to ensure that all edges of $R'$ within a copy of $K_{1,k}$ or $K_{1,1}$ from $\cK$ are $(2k \eps,d-4k\eps)$-super-regular.
	When doing this we can ensure that for each $i \in \cI$ all clusters $V_i$ have the same size and that for $j=1,\dots,k$ all clusters $U_{i,j}$ have the same size, except the cluster $U_{i,1}$ that has two more vertices than other $U_{i,j}$.
	Additionally, we can ensure that for each $i \in \cJ$, the clusters $V_i$ and $U_{i,1}$ have the same size.
	Moreover, by moving only at most $3k-2$ additional vertices from each $U_{i,j}$ to $V_0$ (which do not harm the bounds above), we can ensure that for $i \in \cI$ we have $|V_i| - 4 - |U_{i,k}| \equiv -1 \pmod{3k-1}$, again in view of a later application of the Star Cover Lemma (Lemma~\ref{lem:multipartite2}).
	Note that at this point we have $|V_0| \le \eps n + t(k-1) + t' \delta n_0 + t' k^2 \eps n_0 + t' 3k \le 2 \delta n$.

	\textbf{Connecting the stars.}
	In this step, we fix an arbitrary cyclic ordering of the copies of $K_{1,k}$ and $K_{1,1}$, and we connect each consecutive pair using the square of a short path.
	For the rest of the proof, we will refer to these (squares of) short paths as the connecting paths.
	We first explain the connection between two copies of $K_{1,k}$ and assume without loss of generality that $1,2 \in \cI$; when at least one of the copies is $K_{1,1}$, the connection is similar and will be explained later.
	We use the square of a path on six vertices with end-tuples within $V_1$ and $V_2$, such that, again in view of the Star Cover Lemma (Lemma~\ref{lem:multipartite2}), the end-tuples have many common neighbours into the other clusters $U_{1,j}$ and $U_{2,j}$ for $j=1,\dots,k$, respectively.
	Recall that both $U_{1,1}$ and $U_{2,1}$ contain two more vertices than other leaf clusters, one of which will be used for this connection.
	
	\begin{figure}[htpb]
		
		\includegraphics{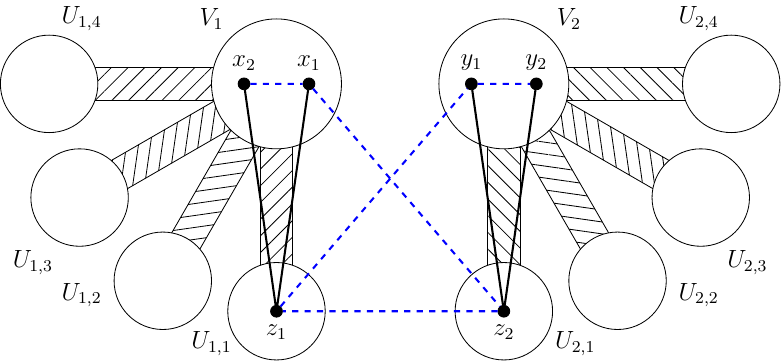}		
		
		\captionsetup{font=footnotesize}
		\captionof{figure}{Construction of the square of a path with end-tuples $(x_1,x_2)$ and $(y_1,y_2)$ that connects two copies of $K_{1,k}$ in the cluster graph $R'$. The dashed blue edges come from the random graph and the black edges from the deterministic graph.}
		\label{fig_connection}
	\end{figure}
	
	We want to find $x_1, x_2 \in V_1$, $z_2 \in U_{2,1}$, $z_1 \in U_{1,1}$ and $y_1, y_2 \in V_2$ such that the following holds (see Figure~\ref{fig_connection}):
	\begin{enumerate}[label=(A\arabic*)]
		\item \label{a} the tuples $(x_1,x_2)$ and $(y_1,y_2)$ have at least $\tfrac 34 d^2 n_0$ common neighbours in each of $U_{1,1},\dots,U_{1,k}$ and $U_{2,1},\dots,U_{2,k}$ in the graph $G$, respectively;
		\item \label{b} $x_1z_1$, $x_2z_1$, $y_1z_2$ and $y_2z_2$ are edges of $G$;
		\item \label{c} $x_2,x_1,z_2,z_1,y_1,y_2$ is a path in $G_3$.
	\end{enumerate}
	For that we use Lemma~\ref{lem:paths2} on the following collection $H$ of tuples.
	We pick subsets $X$ in $V_1$, $Y$ in $V_2$, $Z_1$ in $U_{1,1}$ and $Z_{2}$ in $U_{1,2}$, all of size $n_0/3$, and we let $H$ be the set of those tuples $( x_2,x_1,z_2,z_1,y_1,y_2) \in X \times X \times Z_2 \times Z_1 \times Y \times Y$ which satisfy the properties~\ref{a} and~\ref{b}.
	Note that $H$ contains enough tuples for an application of Lemma~\ref{lem:paths2}.
	Indeed, with Lemma~\ref{lem:MDL}, we get that all but at most $2 k \eps n_0$ vertices $x_1 \in X$ have degree at least $(d-6k\eps )|Z_1|$ and then, fixing any such $x_1$, all but at most $4k^2 \eps n_0$ vertices $x_2 \in X$ have at least $\tfrac 34 d^2 n_0$ common neighbours in $U_{1,1},\dots,U_{1,k}$ with $x_1$ and at least $\tfrac 14 d^2 n_0$ common neighbours in $Z_{1}$ with $x_1$ with respect to $G$ and similarly for $Y$ with $U_{2,1},\dots,U_{2,k}$ and $Z_{2}$.
	\label{key}Therefore, $H$ has size at least $d^4 2^{-12} n_0^6 \ge \delta (kT)^{-6} n^6$.
	Now we reveal the edges of $G_3[X \cup Z_{2} \cup Z_{1} \cup Y]$ and with Lemma~\ref{lem:paths2} and given the choice of $C_3$, we a.a.s.~find a tuple $(x_2,x_1,z_2,z_1,y_1,y_2) \in H$ satisfying the property~\ref{c} as well.
	This will give the desired connecting square of a path. 
	We then remove the vertices $z_1$ and $z_2$ from $U_{1,1}$ and $U_{2,1}$, respectively.
	
	Given an arbitrary cycling ordering of the indices from $\cI$ and $\cJ$, we use this construction to connect all neighbouring pairs $(i,j)$.
	If $i,j \in \cI$ we proceed as described above for the pair $(1,2)$; if $i \in \cJ$, we let $U_{i,1}$ take the role of all $U_{1,1},\dots,U_{1,k}$; and if $j \in \cJ$, we let $U_{j,1}$ take the role of $V_2$ and $V_j$ the role of all $U_{2,1},\dots,U_{2,k}$.
	As we pick sets $X$, $Y$, $Z_1$ and $Z_2$ of size $n_0/3$ and each cluster is involved in at most two distinct connections, we can choose disjoint sets for each connection and avoid clashes.
	Therefore each edge of $G_3$ is revealed at most once and, as there are at most $t'$ connections, we a.a.s.~get all the desired edges of $G_3$.
	Observe that the choices done so far are needed for a later application of the Star Cover Lemma (Lemma~\ref{lem:multipartite2}) and the Pair Cover Lemma (Lemma~\ref{lem:bipartite}).
	Indeed, for $i \in \cI$, there are two end-tuples of connecting paths in $V_i$ and, for $i \in \cJ$, there is one end-tuple of a connecting path in each of $V_i$ and $U_{i,1}$; moreover, for $i \in \cI$, since $U_{i,1}$ is involved in exactly two connections, during this construction we removed exactly two vertices from $U_{i,1}$ and now all $U_{i,j}$ have the same size for $j=1,\dots,k$.

	\textbf{Covering $V_0$.}
	In the next step we cover all vertices of $V_0$ by extending the connecting paths that we have already constructed and we recall that $|V_0| \le 2 \delta n$.
	It is crucial for the rest of the argument (in particular for the applications of the Star Cover Lemma and the Pair Cover Lemma) that the conditions on the relation between the sizes of the clusters are still satisfied and that the end-tuples remain in the same clusters, i.e. if we extend the square of a path with one end-tuple in a cluster $V_i$, then the extended path needs to have the new end-tuple in the same cluster $V_i$.
	We will again make use of Lemma~\ref{lem:paths2}, with a suitable collection of tuples $H$.
	Before giving a precise description, we refer to Figure~\ref{fig_absorb1b}
	and illustrate the extension when $k=3$ and we want to cover a vertex $v \in V_0$ by extending the square of a connecting path with end-tuple $(x_1, x_2)$ in the centre cluster $V_i$ of a copy of $K_{1,3}$. 
	We will find $24$ additional vertices as drawn in Figure~\ref{fig_absorb1b}, where we stress the following conditions.
	The vertices $v_{3,1}, \dots ,v_{3,6}$ are all neighbours of $v$ in $G$ (which will be guaranteed by the minimum degree condition), the blue edges are random edges (which will be guaranteed by Lemma~\ref{lem:paths2}), while the black edges are from the graph $G$ (which will be guaranteed by regularity). 
	Thus we extend the connecting path with one end-tuple $(x_1,x_2)$ in $V_i$ to a longer connecting path with end-tuple $(x_1',x_2')$ still in $V_i$, by appending the square of a path containing $v$ and with end-tuples $(x_2,x_1)$ and $(x_1',x_2')$ (recall this is consistent with the way we defined end-tuples).
	Moreover, since six new vertices have been covered from each cluster, the relation between their sizes still holds.

	\begin{figure}[htpb]
		\begin{minipage}{.85\textwidth}
			%\centering
			\includegraphics{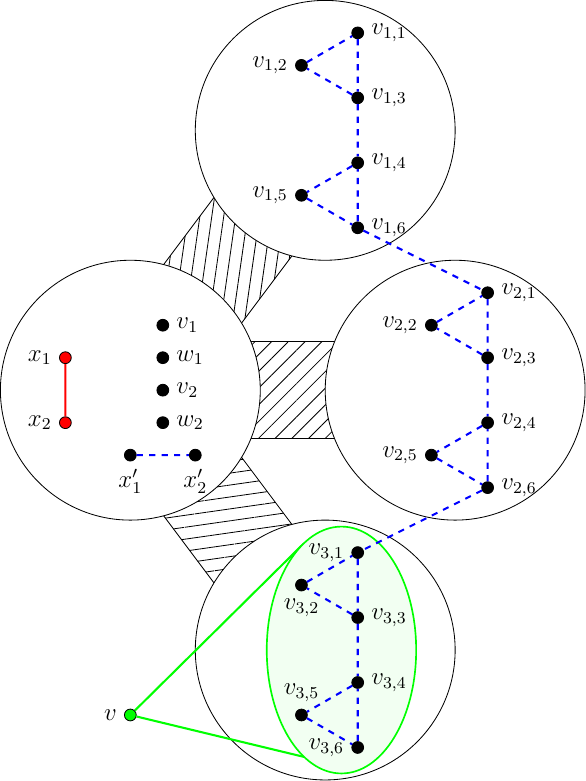}
		\end{minipage}
		\begin{minipage}{.1\textwidth}
			%\centering
			\hspace{-2cm}
			\includegraphics{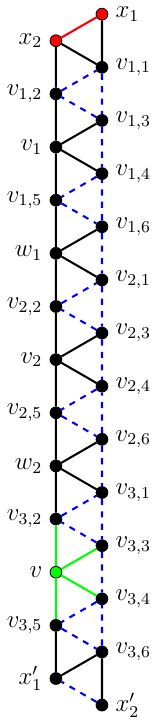}
		\end{minipage}
		\captionsetup{font=footnotesize}
		\captionof{figure}{Covering one vertex $v \in V_0$ by the square of a path for a copy of $K_{1,k}$ in $\cK$ %in the case 
			for $k=3$.
			We keep the position of the end-tuples in $V_i$ and the sizes of the clusters balanced.
			The edges within the clusters (dashed blue) come from the random graph and the edges between the clusters (black) come from regularity.}
		\label{fig_absorb1b}
	\end{figure}
	
	We will now give the details of these constructions and we start by defining the collections of tuples we will use for the applications of Lemma~\ref{lem:paths2}.
	For $i \in \cI$, we let $H_{1,i}$ be the set of those tuples in $\prod_{j=1}^k (U_{i,j}^k \times U_{i,j}^k)$ such that the $2k^2$ vertices in each tuple have at least $\tfrac{1}{2} d^{2k^2} n_0$ common neighbours in $V_i$ in $G$.
	Then we let $H_1=\bigcup_{i \in \cI} H_{1,i}$.
	Similarly, for $j \in \cJ$, we let $H_{2,j}$ be the set of those tuples in $V_j^8 \cup U_{j,1}^8$ such that the $8$ vertices in each tuple have at least $\tfrac 12 d^8 n_0$ common neighbours in the other set in $G$.
	Then we let $H_2=\bigcup_{j \in \cJ} H_{2,j}$.
	Moreover, for $i \in \cI$, we let $H_{3,i}$ be the set of those tuples in $V_i^2$ such that the $2$ vertices in each tuple have at least $\tfrac 34 d^2 n_0$ common neighbours in each of the sets $U_{i,j}$ for $j=1,\dots,k$.
	For $j \in \cJ$, we let $H_{3,j}$ be the set of those tuples in $V_j^2 \cup U_{j,1}^2$ such that the $2$ vertices in each tuple have at least $\tfrac 34 d^2 n_0$ common neighbours in the other set in $G$.
	Then we let $H_3 = \bigcup_{i \in \cI} H_{3,i} \cup \bigcup_{j \in \cJ} H_{3,j}$.
	With the constants specified in the beginning of the proof for obtaining $C_4$, $C_5$ and $C_6$, we apply Lemma~\ref{lem:paths2} to $H_1$ with the random graph $G_4$, to $H_2$ with $G_5$ and to $H_3$ with $G_6$.
	Given the choice of the constant $C$ done at the beginning, we can a.a.s.~assume that 
	\begin{align}
		\label{random_graph_Lemma2.5} 
		\text{$G_4$, $G_5$ and $G_6$ are all in the good event of Lemma~\ref{lem:paths2} for the application above.}
	\end{align}
	
	Now we explain how we cover the vertices of $V_0$ and we show that suitable subsets of $H_1$, $H_2$ and $H_3$ are large enough for Lemma~\ref{lem:paths2}, i.e. larger than $\delta (kT)^{-sk} n^{sk}$ where $sk =2k^2,8,2$, respectively.
	
	Given a vertex $v \in V_0$, we insist that the neighbours of $v$ that we use to cover $v$ do not come from any of the centre clusters $V_i$ with $i \in \cI$, because in this case we could not ensure to use the same number of vertices from each cluster in the copy of $K_{1,k}$ and we would unbalance the star, creating issues for a later application of the Star Cover Lemma (Lemma~\ref{lem:multipartite2}).
	Observe that, as we have $|V_0 \cup (\bigcup_{i \in \cI} V_i)| \le 2 \delta n + (\tfrac{1}{k+1}-4kd)t' n_0 \le (\tfrac{1}{k+1}-\gamma) n - \tfrac{2d}{k-1}n$, every vertex in $G$ has at least $\tfrac{2d}{k-1}n$ neighbours outside of $V_0 \cup (\bigcup_{i \in \cI} V_i)$ in the graph $G$.
	Moreover, for each vertex $v \in V_0$, we will use at most $\max \{2k(k+1),22 \} \le 6k^2$ vertices outside of $V_0$.
	During the process of covering $V_0$, we let $V^\ast \subseteq V_0$ be the set of vertices of $V_0$ already covered and $W$ be the set of vertices outside of $V_0$ that we already used to cover $V^\ast$; at the beginning $V^\ast=\emptyset$ and $W=\emptyset$. 
	Note that we have $|W| \le 6 k^2 |V^\ast| \le 12 k^2 \delta n$.
	Then we let $\cT \subseteq \cI \cup \cJ$ be the set of those indices $i \in \cI \cup \cJ$ such that $U_{i,1}$ intersects $W$ in at least $\sqrt{\delta} n_0$ vertices and note that the bound on $|W|$ implies that $|\cT| \le 12 k^2\sqrt{\delta} t'$.
	We recall that at the beginning of the process for each $i \in \cI$ all the clusters $U_{i,j}$ had the same size for $j=1,\dots,k$.
	Similarly, for each $i \in \cJ$ the clusters $V_i$ and $U_{i,1}$ had the same size as well.
	Since throughout the process we always cover the same number of vertices in each cluster of a copy of $K_{1,1}$ or $K_{1,k}$, if $i \not\in \cT$, then $W$ intersects each cluster of the copy corresponding to the index $i$ in less then $\sqrt{\delta} n_0$ vertices.
	Moreover, notice that as each $v \in V_0$ has at least $\tfrac{2d}{k-1}n$ neighbours outside of $V_0 \cup (\bigcup_{i \in \cI} V_i)$, there are at least $\frac{d}{k-1}t'$ clusters that are not the centre cluster of a copy of $K_{1,k}$ and such that $v$ has at least $\tfrac{d}{k-1}n_0$ neighbours in it. 
	Therefore, for every $v \in V_0 \setminus V^\ast$ there exists $i(v) \in (\cI \cup \cJ) \setminus \cT$ and some $j(v)$ such that $v$ has at least $\tfrac{d}{k-1}n_0$ neighbours in $U_{i(v),j(v)} \setminus W$ with respect to $G$.

	Fix any $v \in V_0 \setminus V^\ast$ and let $i=i(v)$.
	We start discussing the case $i \in \cI$; the case $i \in \cJ$ is conceptually simpler and will be treated afterwards.
	We recall that, since we removed the vertices $z$ used for the connecting paths, all the leaf clusters $U_{i,j}$ for $j=1,\dots,k$ have the same size and thus, without loss of generality, we can assume $j(v)=k$.
	Let $(x_1,x_2)$ and $(y_1,y_2)$ be the end-tuples in $V_i$ of the connecting paths found in the previous step and recall that $x_1$ and $x_2$ have at least $\tfrac 34 d^2 n_0$ common neighbours in $U_{i,j}$ for each $j=1,\dots,k$.
	We will extend the square of the connecting path with end-tuple $(x_1,x_2)$, without using neither $y_1$ nor $y_2$.
	Moreover, we will make sure that the new end-tuple $(x_1',x_2')$ belongs to $V_i$ and that $x_1'$ and $x_2'$ have at least $\tfrac 34 d^2 n_0$ common neighbours in $U_{i,j}$ for each $j=1,\dots,k$.
	
	Let $Z_1$ be the set $(N_G(x_1,U_{i,1}) \cap N_G(x_2,U_{i,1})) \setminus W$, let $Z_j$ be the set $U_{i,j} \setminus W$ for $j=2,\dots,k-1$ and let $Z_k$ be the set $N_G(v,U_{i,k}) \setminus W$.
	Observe that $|Z_1| \ge \tfrac 12 d^2n_0$ since $x_1$ and $x_2$ have at least $\tfrac 34 d^2 n_0$ common neighbours in $U_{i,1}$, $|Z_j| \ge (1-\sqrt{\delta}) n_0 \ge d n_0$ for $j=2,\dots,k-1$ and $|Z_k| \ge \tfrac{d}{k-1} n_0$ by the choice of $i=i(v)$.
	Using the regularity properties, analogously as above, it is easy to see that there are at least $\delta (kT)^{-2k^2} n^{2k^2}$ tuples
	in $\prod_{j=1}^k (Z_j^k \times Z_j^k)$ such that the $2k^2$ vertices in each tuple have at least $\tfrac{1}{2} d^{2k^2} n_0$ common neighbours in $V_i$ in $G$.
	These tuples are in $H_{1,i} \subseteq H_1$ as well and thus~\eqref{random_graph_Lemma2.5} guarantees that we find one of such tuples $(v_{j,j'}: 1 \le j \le k, 1 \le j' \le 2k)$ in $\prod_{j=1}^k (Z_j^k \times Z_j^k)$ where $v_{j,j'} \in Z_j$, such that in $G_4$ we have the square of a path on $v_{j,1},\dots,v_{j,k}$, the square of a path on $v_{j,k+1},\dots,v_{j,2k}$ and the edges $v_{j,k}v_{j,k+1}$ and $v_{j,2k}v_{j+1,1}$.
	Note that we applied Lemma~\ref{lem:paths2} with $s=2k$ and $U_{i,j}$ as $W_{2j-1}$ and $W_{2j}$ for $j=1,\dots,k$, but then relabelled the vertices to achieve the outcome above.
	
	Let $Z \subseteq V_i \setminus (W \cup \{y_1,y_2\})$ be the common neighbourhood of the vertices $v_{j,j'}$ with $1 \le j \le k$ and $1 \le j' \le 2k$ in $V_i$, and observe $|Z| \ge \tfrac{1}{4} d^{2k^2} n_0$.
	Again using regularity, there are at least $\delta (k T)^{-2} n^2$ tuples in $Z^2 \cap H_3$ and, therefore,~\eqref{random_graph_Lemma2.5} guarantees that there is a tuple $(x_1',x_2') \in Z^2 \cap H_3$ such that $x_1'x_2'$ is an edge of $G_6$ and $x_1'$ and $x_2'$ have at least $\tfrac 34 d^2 n_0$ common neighbours in each $U_{i,j}$ for $j=1,\dots,k$.
	We then greedily pick additional $2(k-1)$ vertices $v_1,w_1,\dots,v_{k-1},w_{k-1}$ in $Z$ and we claim that
	\begin{multline*}
		x_1,x_2,v_{1,1},\dots,v_{1,k},v_1,v_{1,k+1},\dots,v_{1,2k},w_1,\\
		v_{2,1},\dots,v_{2,k},v_2,\dots,w_{k-1},v_{k,1},\dots,v_{k,k},v, v_{k,k+1},\dots,v_{k,2k},x_1',x_2'
	\end{multline*}
	is the square of a path with end-tuples $(x_2,x_1)$ and $(x_1',x_2')$ that contains $v$.
	Indeed, $x_1$ and $x_2$ are common neighbours of $v_{1,1}$ and $v_{1,2}$, while $x_1'$ and $x_2'$ are common neighbours of  $v_{k,2k-1}$ and $v_{k,2k}$; moreover the vertex $v$ is a common neighbour of $v_{k,k-1},v_{k,k},v_{k,k+1}$ and $v_{k,k+2}$, the vertex $v_j$ is a common neighbour of $v_{j,k-1},v_{j,k},v_{j,k+1}$ and $v_{j,k+2}$, and the vertex $w_j$ is a common neighbour of $v_{j,2k-1},v_{j,2k},v_{j+1,1}$ and $v_{j+1,2}$.
	This fills the gaps left after the initial construction above where we used Lemma~\ref{lem:paths2} with the graph $G_4$.
	We now add $v$ to $V^\ast$ and all other used vertices $v_{j,j'}$ for $1 \le j \le k$ and $1 \le j' \le 2k$, $v_i,w_i$ for $1 \le i \le k-1$, and $x_1$ and $x_2$ to $W$.
	Note we do not add $x_1'$ and $x_2'$ to $W$ as $(x_1',x_2')$ is the end-tuple of the extended square of a path.
	Observe that we used $2k(k+1)$ vertices to cover $v$ and that we covered exactly $2k$ vertices from each cluster of the copy of $K_{1,k}$.
	
	\begin{figure}[htpb]
		\centering
		\includegraphics{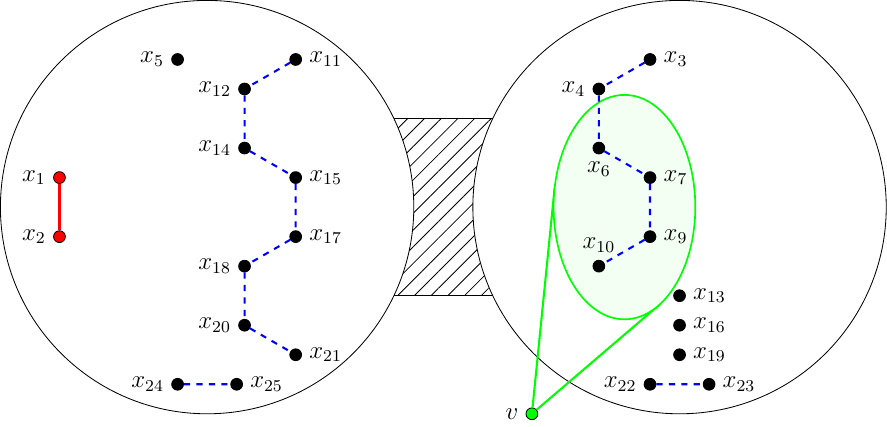}
		\includegraphics{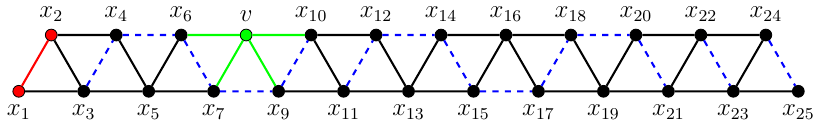}
		\captionsetup{font=footnotesize}
		\captionof{figure}{Covering one vertex $v \in V_0$ by the square of a path for a copy of $K_{1,1}$ in ${\mathcal K}$.
			We keep the position of the end-tuples in $V_i$ and the sizes of the clusters balanced.
			The edges within the clusters (dashed blue) come from the random graph and the edges between the clusters (green and black) come from regularity.}
		\label{fig_absorb2}
	\end{figure}
	
	Now we move to the construction for the case $i=i(v) \in \cJ$.
	We will use always the same construction regardless of the value of $k$ as illustrated in Figure~\ref{fig_absorb2} in a similar way as Figure~\ref{fig_absorb1b} earlier.
	By assumption we have $j(v)=1$ and thus $|N_G(v,U_{i,1} \setminus W)| \ge \tfrac{d}{k-1}n_0$.
	Let $(x_1,x_2) \in V_i^2$ and $(y_1,y_2) \in U_{i,1}^2$ be the end-tuples of the connecting paths found in the previous step and recall that $x_1$ and $x_2$ have at least $\tfrac 34 d^2 n_0$ common neighbours in $U_{i,1}$.
	We will extend the square of the connecting path ending in $(x_1,x_2)$, without using neither $y_1$ nor $y_2$, by constructing the square of a path on pairwise distinct vertices $x_1,x_2,\dots,x_{24},x_{25}$, where $x_8=v$.
	Moreover, we will make sure that the new end-tuple $(x_{24},x_{25})$ belongs to $V_i$, and that $x_{24}$ and $x_{25}$ have at least $\tfrac 34 d^2 n_0$ common neighbours in $U_{i,1}$.
	Let $Z_3=Z_4$ be the common neighbourhood of $x_1$ and $x_2$ in $U_{i,1} \setminus (W \cup \{y_1,y_2\})$ and $Z_6=Z_7=Z_9=Z_{10}$ be the neighbourhood of $v$ in $U_{i,1} \setminus (W \cup \{y_1,y_2\})$.
	Note that $|Z_3| \ge \tfrac12 d^2 n_0$ and $|Z_6| \ge \tfrac{d}{k-1}n_0-2$.
	Using regularity, it is easy to see that for at least $\delta (kT)^{-8} n^8$ tuples in $Z_3 \times Z_4 \times Z_6 \times Z_7 \times Z_9 \times Z_{10} \times U_{i,1} \times U_{i,1}$ the common neighbourhood of their vertices in $V_i$ has size at least $\frac 12 d^8 n_0$ (where we added the set $U_{i,1}$ twice only for the following application of Lemma~\ref{lem:paths2}).
	Thus~\eqref{random_graph_Lemma2.5} guarantees we find a path in $G_5$ on six vertices $x_3,x_4,x_6,x_7,x_9,x_{10}$ (we ignore the two vertices in $U_{i,1}$) with $x_j \in Z_j$ and such that the set $Z$ of the common neighbours of $x_3,x_4,x_6,x_7,x_9,x_{10}$ in $V_i \setminus (W \cup \{x_1,x_2\})$ has size at least $|Z| \ge \frac 14 d^8 n_0$.
	We let $x_5$ be any vertex of $Z$.
	Again using regularity, there are at least $\delta (kT)^{-8} n^8$ tuples in $(Z \setminus \{x_5\})^8$ such that the eight vertices in the tuple have at least $\tfrac 12 d^8 n_0$ common neighbours in $U_{i,1}$.
	These tuples are in $H_2$ as well and thus~\eqref{random_graph_Lemma2.5} guarantees we find in $G_5$ a path on eight vertices $x_{11}, x_{12}, x_{14}, x_{15}, x_{17}, x_{18}, x_{20}, x_{21}$ such that these eight vertices belong to $Z \setminus \{x_5\}$ and the set $Z'$ of their common neighbours in $U_{i,1} \setminus (W \cup \{y_1,y_2,x_3,x_4,x_5,x_6,x_7,x_9,x_{10}\})$ has size $|Z'| \ge \tfrac 14 d^8n_0$.
	We let $x_{13}$, $x_{16}$ and $x_{19}$ be any three vertices of $Z'$.
	Again with~\eqref{random_graph_Lemma2.5}, we find an edge $x_{22}x_{23}$ of $G_6$ such that $x_{22},x_{23} \in Z' \setminus \{x_{13},x_{16},x_{19}\}$ and their common neighbourhood $Z''$ in $V_i \setminus (W \cup \{y_1,y_2,x_1,x_2,x_5,x_{11}, x_{12}, x_{14}, x_{15}, x_{17}, x_{18}, x_{20}, x_{21}\})$ has size at least $\tfrac 12 d^2n_0$.
	With another application of~\eqref{random_graph_Lemma2.5} we find an edge $x_{24}x_{25}$ of $G_6$ such that $x_{24}, x_{25} \in Z''$ and they have at least $\tfrac 34 d^2n_0$ common neighbours in $U_{i,1}$.
	This gives the square of a path on the vertices $x_1, \dots,x_7,v,x_9,\dots,x_{25}$. 
	We add $v$ to $V^\ast$ and $x_1, \dots,x_7,x_9,\dots,x_{23}$ to $W$; note we do not remove of $x_{24}$ and $x_{25}$ as $(x_{24},x_{25})$ is the end-tuple of the extended square of a path.
	Note that we used $22$ vertices to cover $v$ and that we covered exactly $11$ vertices from each cluster $V_i$ and $U_{i,1}$.
	
	We keep covering the vertices of $V_0$ in this way until $V^\ast=V_0$, then we remove the vertices in $W$ from the clusters.
	
	\textbf{Completing the square of the Hamilton cycle.}
	Before finishing the proof, we summarise what we have done so far and, abusing notation, we still denotes the clusters by $V_i,U_{i,1},\dots,U_{i,k}$ for $i \in \cI$ and $V_i, U_{i,1}$ for $i \in \cJ$, even if we removed several vertices from them in the previous steps of the proof while connecting stars and covering $V_0$.
	We have covered all vertices of $G$, except those that are still in the clusters of a copy of $K_{1,1}$ or $K_{1,k}$, with the squares of short paths with the following properties.
	Their end-tuples belong to some $V_i$ with $i \in \cI$, or some $V_j$ or some $U_{j,1}$ with $j \in \cJ$.
	Moreover, for each $i \in \cI$, the cluster $V_i$ contains exactly two of such tuples, say $(x_1,x_2)$ and $(y_1,y_2)$, such that $x_1$ and $x_2$ have at least $\tfrac 34 d^2 n_0 - \sqrt{\delta}n_0 \ge \tfrac 12 d^2 n_0$ common neighbours in $U_{i,j}$ for $j=1,\dots,k$, and the same holds for $y_1$ and $y_2$.
	Additionally, for each $j \in \cJ$, the clusters $V_j$ and $U_{j,1}$ contain exactly one such tuple each, say $(x_1,x_2)$ and $(y_1,y_2)$ respectively, such that $x_1$ and $x_2$ have at least $\tfrac 12 d^2 n_0$ common neighbours in $U_{j,1}$, and $y_1$ and $y_2$ have at least $\tfrac 12 d^2 n_0$ common neighbours in $V_j$.
	Therefore, it remains to cover the clusters by extending the squares of paths we already have.
	
	Let $i \in \cI$.
	Note that we still have that $|U_{i,1}|=\dots=|U_{i,k}|$ and $|V_i| -4 - |U_{i,k}| \equiv -1 \pmod{3k-1}$ for $i \in \cI$.
	Recall that we removed $\delta n_0+4(1-\delta)$ vertices from each $U_{i,j}$ at the beginning.
	Moreover, while making the regular pairs super-regular and during the previous step, we removed the same number of vertices from each cluster of a copy of $K_{1,k}$ and at most $2 \sqrt{\delta}n_0$ vertices from each. 
	Therefore,  using that $2 \delta<\delta_0$, we get $|U_{i,j}| \ge (1-\delta_0)(|V_i|-4)$ for $j=1, \dots, k$.
	Note that also $|U_{i,k}| \le (1-\delta)(|V_i|-4)$ still holds.
	In addition, observe that $(V_i,U_{i,j})$ is $(\eps',\tfrac{d}{2})$-super-regular.
	Now let $(x_1,x_2)$ and $(y_1,y_2)$ be the two end-tuples in $V_i$ of connecting paths and recall they both have at least $\tfrac 12 d^2 n_0$ common neighbours in $U_{i,j}$ for each $j=1,\dots,k$.
	We apply the Star Cover Lemma (Lemma~\ref{lem:multipartite2}) with the random graph $G_1$ to $V_i,U_{i,1},\dots,U_{i,k}$ and get the square of a path with end-tuples $(x_2,x_1)$ and $(y_2,y_1)$, covering all vertices in $V_i \cup U_{i,1} \cup \dots \cup U_{i,k}$.
	
	For $i \in \cJ$ we proceed similarly.
	We have $|V_i|=|U_{i,1}|$ and two end-tuples $(x_1,x_2)$ in $V_i$ and $(y_1,y_2)$ in $U_{i,1}$ of connecting paths.
	Since $x_1$ and $x_2$ have at least $\tfrac 12 d^2 n_0$ common neighbours in $U_{i,1}$, and $y_1$ and $y_2$ have at least $\tfrac 12 d^2 n_0$ common neighbours in $V_i$, we can apply the Pair Cover Lemma (Lemma~\ref{lem:bipartite}) with the random graph $G_2$ to $V_i,U_{i,1}$, and get the square of a path with end-tuples $(x_2,x_1)$ and $(y_2,y_1)$, covering all vertices in $V_i \cup U_{i,1}$.
	
	This completes the square of a Hamilton cycle covering all vertices of $G$ and finishes the proof.
\end{proof}

\section{Regularity in auxiliary graphs}
\label{sec:aux_reg}

The aim of this section is to prove the main technical lemma behind the Star Cover Lemma (Lemma~\ref{lem:multipartite2}), whose proof is provided in Section~\ref{sec:auxiliary}.
In Lemma~\ref{lem:multipartite2}, we have to find the square of a Hamilton path (subject to additional conditions), where we can use both  deterministic and random edges.
Here we look at the edges coming from the random graph and show that we can find many disjoint copies of the square of a short path in the random graph, with some additional properties with respect to the deterministic graph, which guarantee we can then nicely connect them and build the desired structure.
We state a more general version of the result we need for Lemma~\ref{lem:multipartite2}, as we believe it might be of independent interest and helpful in other problems of similar flavour.
Before stating the lemma we introduce some definitions. 

\begin{definition}[$H$-transversal family]
	\label{def:transversal}
	Let $H$ be a graph and set $h=|V(H)|$.
	Let $V, U_1, \dots, U_h$ be disjoint sets of vertices and $G$ be a graph on vertex set $V \cup U_1 \cup \dots \cup U_h$. 
	A family $\cH$ of pairwise vertex-disjoint copies of $H$ in $G[U_1,\dots,U_h]$ is said to be a $H$-transversal family if, in addition, there exists a labelling $v_1,\dots,v_h$ of the vertices of $H$ such that, for each copy $H \in \cH$, the vertex $v_i$ is embedded into $U_i$ for each $i=1,\dots,h$.
\end{definition}

With $\cH$ being a $H$-transversal family in $G$, we define the following auxiliary bipartite graph $\cT_G(\cH,V)$.

\begin{definition}[Auxiliary graph $\cT_G(\cH,V)$]
	\label{def:aux_T_G(H,V)}
	We define $\cT_G(\cH,V)$ to be the bipartite graph with partition classes $\cH$ and $V$, where the edge between $H \in \cH$ and $v \in V$ appears if and only if all vertices of $H$ are incident to $v$ in $G$.
\end{definition}

We prove the following general lemma, which was already proved in the case when $H=K_2$ in~\cite[Lemma~8.1]{triangle_paper}.

\begin{lemma}
	\label{lem:aux_reg}
	Let $H$ be a graph on $h$ vertices. 
	For any $d,\delta,\eps'>0$ with $2\delta \le d$ there exist $\eps,C>0$ such that the following holds.
	Let $V,U_1,\dots,U_h$ be pairwise disjoint sets with $|V|=n$ and $|U_i|=m=(1 \pm \tfrac 12) n$ for $i=1,\dots,h$ such that $(V,U_i)$ is $(\eps,d)$-super-regular with respect to a graph $G$ for $i=1,\dots,h$.
	Furthermore, suppose that $p \ge C n^{-1/m_1(H)}$.
	
	Then a.a.s.~there exists an $H$-transversal family $\cH \subseteq G(U_1,\dots,U_h,p)$ of size $|\cH|\ge (1-\delta)m$ such that the pair $(\cH,V)$ is $(\eps',d^{h+1} 2^{-h-3})$-super-regular with respect to the auxiliary graph $\cT_G(\cH,V)$.
\end{lemma}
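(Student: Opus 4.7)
The plan is to proceed in two stages: first construct a near-perfect $H$-transversal family $\mathcal{H}$ inside the random graph $\Gamma = G(U_1,\dots,U_h,p)$, then verify that the bipartite graph $\mathcal{T}_G(\mathcal{H},V)$ inherits $(\varepsilon',\lambda)$-super-regularity from the pairs $(V,U_i)$, where $\lambda := d^{h+1}2^{-h-3}$.

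For the first stage, I would build $\mathcal{H}_0$ greedily inside $\Gamma$. A partite analogue of Lemma~\ref{lem:embedding} (proved by the same Janson argument) shows that for any linear-sized subsets of $U_1,\dots,U_h$, the induced partite random graph a.a.s.\ contains a transversal copy of $H$. Iterating this until the unused part of some $U_i$ shrinks below $\delta m /(4h)$ yields an $H$-transversal matching $\mathcal{H}_0$ of size at least $(1-\delta/4)m$. Then I would prune $\mathcal{H}_0$ by discarding every copy $H\in \mathcal{H}_0$ that contains a vertex which is \emph{non-typical} for some super-regular pair $(V,U_i)$, i.e.\ whose $G$-neighbourhood in some restricted set of $V$ we care about is smaller than $(d-O(\varepsilon))$ times that set. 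By iterated application of Lemma~\ref{lem:MDL}, at most an $O(h\varepsilon)$ fraction of copies of $\mathcal{H}_0$ is lost, so the resulting family $\mathcal{H}$ still has $|\mathcal{H}|\ge (1-\delta)m$, and crucially every $H\in\mathcal{H}$ has common $G$-neighbourhood in $V$ of size at least $(d-O(\varepsilon))^h |V|\ge \lambda |V|$.

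For the second stage I would verify the remaining conditions of $(\varepsilon',\lambda)$-super-regularity of $\mathcal{T}_G(\mathcal{H},V)$. The $\mathcal{H}$-side minimum degree is immediate from the pruning above. For the $V$-side minimum degree, fix $v\in V$ and set $W_i(v):=N_G(v,U_i)$, so that $|W_i(v)|\ge d|U_i|$ by super-regularity of $(V,U_i)$. By Janson's inequality applied to $\Gamma[W_1(v),\dots,W_h(v)]$, the number of transversal $H$-copies of $\Gamma$ living inside $\prod_i W_i(v)$ is a.a.s.\ at least $(1-o(1))p^{e(H)}\prod_i|W_i(v)|\ge d^h(1-o(1))\cdot p^{e(H)}\prod_i|U_i|$. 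Since $\mathcal{H}$ captures almost all transversal copies of $\Gamma$ (the total count of which is also concentrated by Janson), we obtain $\deg_{\mathcal{T}}(v)\ge d^h(1-o(1))|\mathcal{H}|\ge \lambda|\mathcal{H}|$, and a union bound over $v\in V$ handles all vertices simultaneously since $|V|$ is only linear in $n$. For the density condition on $X\subseteq\mathcal{H}$, $Y\subseteq V$ with $|X|\ge \varepsilon'|\mathcal{H}|$ and $|Y|\ge \varepsilon'|V|$, I would sum the $v$-degree lower bound above \emph{restricted to $X$}: the projections of $X$ onto each $U_i$ still have size $\Omega(\varepsilon' m)$, so the corresponding Janson estimate on $\Gamma[W_1(v)\cap \pi_1(X),\dots,W_h(v)\cap \pi_h(X)]$ yields enough transversal copies contained in $X$, giving $e_{\mathcal{T}}(X,Y)\ge \lambda|X||Y|$ after union-bounding.

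The main obstacle is the uniform conversion from random-graph density statements about $\Gamma$, to which Janson applies, into combinatorial density statements about the deterministic family $\mathcal{H}$: one must argue that $\mathcal{H}$ essentially inherits the ``uniform distribution'' of transversal copies of $\Gamma$ over all linear-sized product sets $\prod_i Z_i\subseteq \prod_i U_i$, so that the super-regularity of the pairs $(V,U_i)$ passes to $\mathcal{T}_G(\mathcal{H},V)$. The key quantitative ingredients are Janson's inequality applied with a union bound over a polynomial family of relevant events, and the iterated Minimum Degree Lemma that controls the non-typical vertices once and for all.
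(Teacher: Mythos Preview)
There is a genuine gap, and it is precisely the obstacle you flag at the end but do not actually overcome. Your key step for the $V$-side minimum degree is the sentence ``Since $\mathcal{H}$ captures almost all transversal copies of $\Gamma$\dots''. This is false: $\mathcal{H}$ is a \emph{vertex-disjoint} family, so $|\mathcal{H}|\le m$, whereas the total number of transversal $H$-copies in $\Gamma$ is of order $m^h p^{e(H)}$. Janson tells you that $\Gamma$ contains roughly $d^h m^h p^{e(H)}$ transversal copies inside $\prod_i W_i(v)$, but this says nothing about how many elements of your particular matching $\mathcal{H}$ happen to lie in $\prod_i W_i(v)$. A deterministic (or unspecified) greedy process could in principle systematically pick copies that avoid $\prod_i W_i(v)$ for a fixed $v$, since the greedy depends on the already-revealed $\Gamma$ and you cannot apply Janson to it a posteriori.

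The density argument has the same problem and an additional one. You propose to count copies of $\Gamma$ in the product $(W_1(v)\cap\pi_1(X))\times\dots\times(W_h(v)\cap\pi_h(X))$, but a transversal copy living in this product is in general \emph{not} an element of $X$: the set $X$ consists of $|X|$ specific $h$-tuples, not all of $\pi_1(X)\times\dots\times\pi_h(X)$. Moreover, you would need a union bound over all $X\subseteq\mathcal{H}$, but $\mathcal{H}$ itself is a function of $\Gamma$, so this union bound cannot be taken before revealing $\Gamma$.

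The paper resolves this by making the greedy process itself random: at each step $H_{i+1}$ is chosen \emph{uniformly at random} from the available edges of $\tilde F$. This injects fresh randomness that is independent of the history, so for each $v\in V$ (and, via the notion of ``good for $X$'', for each small $X\subseteq V$) one gets a uniform lower bound on the conditional probability $\PP[H_i\in N_{\mathcal T}(v)\mid \mathcal{H}_{i-1}]$. The process then dominates a binomial, Chernoff applies, and a union bound over $v$ (respectively over sets $X$ of a single fixed size $\eta n$) finishes. The concentration Lemma~\ref{lem:H_copies} is used only to compare the numerator and denominator in this conditional probability, not to count copies in $\mathcal{H}$ directly. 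In short, the missing idea in your proposal is that the randomness must be in the \emph{selection} of $\mathcal{H}$, not only in $\Gamma$.
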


The lemma shows that not only there is a large $H$-transversal family $\cH$, but that we can additionally require that $(\cH,V)$ is a super-regular pair in $\cT_G(\cH,V)$.
The proof of Lemma~\ref{lem:multipartite2} will use the special case of $H$ being the square of a path on $k$ vertices.

Before giving a proof of Lemma~\ref{lem:aux_reg}, we introduce an auxiliary $h$-partite $h$-uniform hypergraph $F=F_{G,V}(U_1,\dots,U_h)$ to encode the potential tuples in $U_1 \times \dots \times U_h$ that we would like to use for building the copies of $H$ for the family $\cH$.

\begin{definition}[Auxiliary hypergraph $F$]
	\label{def:aux_F_{G,V}}
	Let $h \ge 1$ be an integer, $V,U_1,\dots,U_h$ be pairwise disjoint sets with $|V|=n$ and $|U_i|=m=(1 \pm \tfrac 12) n$ for $i=1,\dots,h$.
	We define $F=F_{G,V}(U_1,\dots,U_h)$\footnote{We remark that the definition of $F$ depends on $d$ as well. However, as this will always be clear from the context, we omit writing $d$ explicitly in $F_{G,V}(U_1,\dots,U_h)$.} to be the $h$-partite $h$-uniform hypergraph on $U_1 \times \dots \times U_h$ where a tuple $(u_1,\dots,u_h) \in U_1 \times \dots \times U_h$ is an edge of $F$ if and only if the vertices $u_1,\dots,u_h $ have at least $\tfrac12 d^h n$ common neighbours in the set $V$ in the graph $G$, i.e.~$| \bigcap_{i=1}^hN_G(u_i,V)| \ge \tfrac12 d^h n$.
	
	Similarly, given a set $X \subseteq V$, we call an edge $(u_1,\dots,u_h) \in E(F)$ \emph{good for $X$} if and only if there are at least $\tfrac12 d^h|X|$ vertices in $X$ that are incident to all of $u_1,\dots,u_h$ in $G$.
	We denote by $F_X$ the spanning subgraph of $F$ with edges that are good for $X$.
\end{definition}

However, we can only use those copies of $H$ which actually do appear in the random graph.
In order to encode that, we define the random spanning subgraph $\tilde{F}$ of $F$ as follows.

\begin{definition}[Auxiliary hypergraph $\tilde{F}$]
	\label{def:aux_tilde_F_{G,V}}
	Let $H$ be a graph on $h$ vertices. 
	Let $V,U_1,\dots,U_h$ be pairwise disjoint sets and $F$ be the hypergraph defined in Definition~\ref{def:aux_F_{G,V}}.
	After revealing the edges of the random $h$-partite graph $G(U_1,\dots,U_h,p)$, we denote by $\tilde F$ the (random) spanning subhypergraph of $F$ formed by those edges $(u_1,\dots,u_h)$ of $F$ for which the vertices $u_1,\dots,u_h$ give a copy of $H$ in the revealed random graph $G(U_1,\dots,U_h,p)$.
	We will say that $\tilde{F}$ is supported by $G(U_1,\dots,U_h,p)$.
\end{definition}

We remark that if $(u_1,\dots,u_h) \in U_1 \times \dots \times U_h$ is an edge of $\tilde F$, then the vertices $u_1, \dots, u_h$ give a copy of $H$ in $G(U_1,\dots,U_h,p)$ and have at least $\tfrac12 d^h n$ common neighbours in the set $V$ in the graph $G$.
We state some additional properties of $F$ below.

\begin{lemma}
	\label{lem:auxF}
	Let $H$ be a graph on $h$ vertices. 
	Let $0<d<1$ and $\eps \le \min \{ \tfrac12 d^{h-1}, d(1-2^{-1/h}) \}$.
	Let $G$ be a graph on vertex set $V \cup U_1 \cup \dots \cup U_h$ with $|V|=n$ and $|U_1|=\dots=|U_h|=m=(1 \pm \tfrac12)n$, and assume $(V,U_i)$ is a $(\eps,d)$-super-regular pair with respect to $G$ for each $i=1,\dots,h$.
	Let $F=F_{G,V}(U_1,\dots,U_h)$ be the hypergraph defined in Definition~\ref{def:aux_F_{G,V}}.
	Then the following holds:
	\begin{enumerate}[label=(\roman*)]
		\item \label{claim:min_degree} The minimum degree of $F$ is at least $(1-h\eps)m^{h-1}$.
		\item \label{claim:degree_god} If $|X| \ge 2\eps n d^{1-h}$, all but at most $\eps m$ vertices from each $U_i$ have degree at least $(1-h\eps)m^{h-1}$ in $F_X$.		
	\end{enumerate}
\end{lemma}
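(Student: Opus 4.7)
My plan is to establish both parts by the same inductive common-neighbourhood argument, building a tuple $(u_1,\ldots,u_h) \in U_1 \times \cdots \times U_h$ one coordinate at a time and tracking the surviving common neighbourhood in $V$ (resp. in $X$) via the Minimum Degree Lemma~\ref{lem:MDL} applied to each super-regular pair $(V,U_j)$.

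For part~\ref{claim:min_degree}, I would fix an index $i$ and a vertex $u_i\in U_i$. Set $W_0=N_G(u_i,V)$, which has size at least $dn$ by super-regularity. Choosing an arbitrary ordering of $[h]\setminus\{i\}$ and revealing the coordinates $u_j$ one at a time, call $u_j\in U_j$ \emph{good} if $|N_G(u_j)\cap W_{j-1}|\ge(d-\eps)|W_{j-1}|$, and set $W_j=W_{j-1}\cap N_G(u_j)$. Under the hypothesis $\eps\le\tfrac12 d^{h-1}$, one checks inductively that $|W_{j-1}|\ge(d-\eps)^{j-1}dn\ge\eps n$ at every step, so Lemma~\ref{lem:MDL} applies to the pair $(V,U_j)$ with subset $W_{j-1}$ and shows that at most $\eps m$ choices of $u_j$ are bad. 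A union bound over the $h-1$ coordinates leaves at least $(1-(h-1)\eps)m^{h-1}\ge(1-h\eps)m^{h-1}$ surviving tuples, and for any such tuple the final common neighbourhood has size at least $(d-\eps)^{h-1}dn\ge\tfrac12 d^h n$ by the condition $\eps\le d(1-2^{-1/h})$; such a tuple is therefore an edge of $F$.

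For part~\ref{claim:degree_god} the plan is identical, but $X$ replaces $V$ as the pool in which common neighbours are counted. First I apply Lemma~\ref{lem:MDL} to the pair $(V,U_i)$ with the subset $X\subseteq V$ — which is permissible because $|X|\ge 2\eps n d^{1-h}\ge\eps n$ — to conclude that at most $\eps m$ vertices $u_i\in U_i$ fail to satisfy $|N_G(u_i)\cap X|\ge(d-\eps)|X|$. For any other $u_i$, I initialise $W_0=N_G(u_i)\cap X$, whose size is at least $(d-\eps)|X|\ge\eps n$ (using $d-\eps\ge d/2$ together with the lower bound on $|X|$), and then iterate exactly as in part~\ref{claim:min_degree}. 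Again at most $(h-1)\eps m^{h-1}$ tuples are discarded, and the surviving ones give common neighbourhood in $X$ of size at least $(d-\eps)^h|X|\ge\tfrac12 d^h|X|$, certifying membership in $F_X$.

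The argument is almost entirely bookkeeping; the only care required is in the calibration of the two upper bounds on $\eps$. The bound $\eps\le\tfrac12 d^{h-1}$ is what keeps every intermediate common neighbourhood above the regularity threshold $\eps n$ so that Lemma~\ref{lem:MDL} may legitimately be invoked, while the bound $\eps\le d(1-2^{-1/h})$ is chosen precisely so that $(d-\eps)^h \ge d^h/2$, delivering the required final bound on the common neighbourhood. I do not anticipate a genuine obstacle — this lemma is the direct extension of its $H=K_2$ analogue from~\cite{triangle_paper}, and the iteration works uniformly in $h$.
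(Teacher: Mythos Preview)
Your proposal is correct and follows essentially the same approach as the paper: fix a vertex in one part, use super-regularity to initialise the common neighbourhood in $V$ (or in $X$), and then iterate the Minimum Degree Lemma across the remaining $h-1$ parts, losing at most an $\eps m$ fraction of choices at each step. The paper phrases the count multiplicatively as $((1-\eps)m)^{h-1}$ rather than via your additive union bound, but the two are equivalent. One minor point: your attribution of roles to the two hypotheses on $\eps$ is slightly off---in fact both bounds are needed already to keep the intermediate neighbourhoods above $\eps n$ (the first alone only gives $(d-\eps)^{h-2}d\ge d^{h-1}/2^{h-2}$, which does not suffice for large $h$), but since you have both hypotheses available this does not affect correctness.
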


Moreover the subgraph $\tilde F$ keeps roughly the expected number of edges of $F$.	

\begin{lemma}	
	\label{lem:H_copies}
	For any graph $H$ on $h \ge 2$ vertices and any $\delta>0$ there exist $\eps>0$ and $C>0$ such that the following holds for $p \ge C n^{-1/m_1(H)}$.
	Let $V,U_1,\dots,U_h$ be pairwise disjoint sets with $|V|=n$ and $|U_i|=m=(1 \pm \tfrac 12) n$ for $i=1,\dots,h$, and $F=F_{G,V}(U_1,\dots,U_h)$ be the hypergraph defined in Definition~\ref{def:aux_F_{G,V}}.
	Then a.a.s.~for any sets $U_i' \subseteq U_i$ of size at least $\delta m$ for $i=1,\dots,h$, we have that $\tilde{F'} =\tilde{F}[U_1',\dots,U_h']$ satisfies
	\begin{equation}
		\label{eq:copies_F}
		e(\tilde F')  = (1 \pm \sqrt{\eps}) \prod_{i=1}^h |U_i'| p^{e(H)}.
	\end{equation}
	Moreover, if $|X| \ge 2\eps n d^{1-h}$, then with probability at least $1-e^{-n}$, for any choice of $U_1',\dots,U_h'$ as above and with $\tilde{F_X'}=\tilde{F_X}[U'_1,\dots,U'_h]$, we have
	\begin{equation}
		\label{eq:copies_F_good}
		e(\tilde F_X')  \ge (1-\sqrt{\eps}) \prod_{i=1}^h |U_i'| p^{e(H)}.
	\end{equation}
\end{lemma}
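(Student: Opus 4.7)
The plan is to fix a choice $(U_1', \ldots, U_h')$ with $|U_i'| \geq \delta m$, establish exponential concentration for $e(\tilde F')$ via Janson-type inequalities, and then union bound over the at most $2^{hm} \leq \exp(O(n))$ such tuples of subsets. The counting step is immediate from Lemma~\ref{lem:auxF}\ref{claim:min_degree}: for each $u \in U_i$ at most $h\eps m^{h-1}$ tuples through $u$ are missing from $F$, so in total at most $h\eps m^h$ tuples of $\prod_{i=1}^h U_i$ are missing, hence at most $O(\eps/\delta^h)\prod|U_i'|$ tuples of $\prod_{i=1}^h U_i'$ are missing from $F$. Taking $\eps$ sufficiently small compared to $\delta^h$ puts $|E(F)\cap \prod U_i'|$ within a multiplicative $1 \pm \sqrt{\eps}/2$ of $\prod|U_i'|$, and since each $F$-edge is independently an edge of $\tilde F$ with probability $p^{e(H)}$, we obtain
\[
  \mu := \mathbb{E}[e(\tilde F')] = (1 \pm \sqrt{\eps}/2) \prod_{i=1}^h |U_i'| \, p^{e(H)}.
\]
The key quantitative point is that $|U_i'| \geq \delta n/2$ together with $p^{e(H)} \geq C^{e(H)} n^{-(h-1)}$ (using $e(H)/m_1(H) \leq h-1$) yields $\mu \geq c_0 n$ with $c_0$ arbitrarily large in $C$.

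For concentration, I view $e(\tilde F')$ as a subgraph count and apply Janson's inequality. The Janson parameter $\Delta$, counting ordered pairs of tuples whose $H$-copies share at least one edge, satisfies
\[
  \Delta \;\leq\; \sum_{\substack{F' \subseteq H \\ e(F') \geq 1}} O\bigl(n^{2h - v(F')} p^{2e(H) - e(F')}\bigr),
\]
and the bound $p \geq C n^{-1/m_1(H)}$ applied to each such $F'$ (noting $e(F')/(v(F')-1) \leq m_1(H)$) gives $n^{v(F')} p^{e(F')} \geq C^{e(F')} n$, so $\Delta \leq O(\mu^2/(Cn))$. In particular $\Delta \leq \mu$ once $C$ is large, and Janson's lower-tail inequality yields $\Pr[e(\tilde F') \leq (1-\sqrt{\eps})\mu] \leq \exp(-\eps\mu/4) \leq \exp(-c_1 n)$, with $c_1$ arbitrarily large in $C$. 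The matching upper tail bound $\Pr[e(\tilde F') \geq (1+\sqrt{\eps})\mu] \leq \exp(-c_1 n)$ follows from Janson's upper-tail inequality (cf.~\cite{JLR}) or Kim--Vu polynomial concentration, both of which give exponential decay in our regime where $\Delta$ is small compared to $\mu^2/n$.

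A union bound over the at most $2^{hm} \leq \exp(O(n))$ subset tuples then proves the first claim a.a.s., provided $C$ is chosen so that $c_1 > 2h\ln 2$. The moreover claim follows along identical lines after replacing $F$ with $F_X$ throughout: Lemma~\ref{lem:auxF}\ref{claim:degree_god} guarantees that for $|X| \geq 2\eps n d^{1-h}$ all but $\eps m$ vertices of each $U_i$ have $F_X$-degree at least $(1-h\eps)m^{h-1}$, whence $|E(F_X)\cap \prod U_i'| \geq (1 - O(\eps/\delta^h))\prod|U_i'|$ and hence the same lower bound on the mean of $e(\tilde{F_X'})$. Since only the lower tail is needed there, a single application of Janson's inequality plus the same union bound suffices and, by choosing $C$ so that $c_1 > 2h\ln 2 + 1$, produces total failure probability at most $e^{-n}$, as required. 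The principal technical subtlety is the upper tail for a general subgraph count -- this is the step that most demands careful invocation of the Janson (or Kim--Vu) upper-tail machinery -- whereas the lower tail, the expectation computation, and the combinatorial reductions via Lemma~\ref{lem:auxF} are all standard.
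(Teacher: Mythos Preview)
Your lower-tail argument via Janson and the expectation computation via Lemma~\ref{lem:auxF} are fine and match the paper. The gap is the upper tail. You write that ``$\Pr[e(\tilde F') \geq (1+\sqrt{\eps})\mu] \leq \exp(-c_1 n)$ follows from Janson's upper-tail inequality (cf.~\cite{JLR}) or Kim--Vu'', but neither tool delivers this. Janson's inequality as stated in~\cite{JLR} (and as Lemma~\ref{lem:janson} here) is a lower-tail bound only; there is no companion upper-tail statement of comparable strength --- indeed the upper tail for subgraph counts was a well-known hard problem. Kim--Vu polynomial concentration gives deviations of order $\lambda^{e(H)}\sqrt{E_0E_1}$ with failure probability $n^{e(H)-1}e^{-\lambda}$, so beating a union bound over $2^{hm}$ subsets forces $\lambda\gtrsim n$, at which point the allowed deviation is far larger than $\sqrt{\eps}\,\mu$. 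You correctly flag this step as ``the principal technical subtlety'' but then do not resolve it.

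The paper sidesteps the upper-tail problem entirely. It first applies Chebyshev to the \emph{global} count $e(\tilde F)$ --- a single event, so a.a.s.\ suffices --- obtaining $e(\tilde F)=(1\pm\eps)\EE[e(\tilde F)]$. Then, for each choice of $U_1',\dots,U_h'$, it writes
\[
e(\tilde F')\;\le\; e(\tilde F)\;-\;\sum_{\emptyset\neq I\subseteq J} e\bigl(\tilde F[U_i\setminus U_i':i\in I;\,U_i':i\notin I]\bigr),
\]
where $J$ is the set of indices with $|U_i\setminus U_i'|\ge\delta'm$ for a suitable $\delta'\ll\sqrt{\eps}$. Each subtracted term is itself a count over sets of size at least $\delta'm$, so its \emph{lower} tail is controlled exponentially by Janson, uniformly over all choices via the same union bound you use. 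The contributions from indices outside $J$ are absorbed into the error since $|U_i\setminus U_i'|<\delta'm$ there. Thus the upper bound on $e(\tilde F')$ is obtained from one global Chebyshev estimate plus many Janson lower-tail estimates, never an upper-tail bound on a subgraph count. Your argument would need either this decomposition or a genuine exponential upper-tail inequality that you have not supplied.
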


We remark that we will show a more general version of Lemma~\ref{lem:H_copies}.
Indeed our proof will only use that $F$ satisfies~\ref{claim:min_degree} and that $F_X$ satisfies~\ref{claim:degree_god} for all $X$ with $|X| \ge 2\eps n d^{1-h}$. 
Thus~\eqref{eq:copies_F} holds for any $h$-partite $h$-uniform hypergraph $F$ on partition classes $U_1,\dots,U_h$ of size $m=(1 \pm \tfrac 12 )n$ with minimum degree $(1-h\eps)m^{h-1}$.
Similarly~\eqref{eq:copies_F_good} holds for all subgraphs $F_X$ of such $F$, such that all but $\eps m$ vertices in each class have degree at least $(1-h\eps)m^{h-1}$ in $F_X$.

The proof of Lemma~\ref{lem:auxF} relies on a standard application of the regularity method and that of Lemma~\ref{lem:H_copies} follows from an application of Chebyshev's and Janson's inequalities. 
Therefore we postpone them to Appendix~\ref{sec:appendix} and we now turn to the main proof of this section.

\begin{proof}[Proof of Lemma~\ref{lem:aux_reg}]
	Given a graph $H$ on $h \ge 2$ vertices and $d,\delta,\eps' > 0$ with $2\delta \le d$, suppose that 
	\[\eps \ll \eta \ll \gamma \ll d,\delta,\eps'\]
	are positive real numbers such that
	\[
	\gamma \le \tfrac{\eps'(1-\delta)}{2}, \quad
	\eta \log(\tfrac{1}{\eta}) \le \tfrac14 \gamma, \quad
	\eps \le \min \{ \tfrac12 \eta d^{h-1}, d(1-2^{-1/h}) , \tfrac{1}{14^2} \gamma^2 \}\, ,
	\]
	where additionally we require that $\eps$ is small enough for  Lemma~\ref{lem:H_copies} with input $H$ and $\delta$.
	Furthermore, let $C>0$ be large enough for Lemma~\ref{lem:H_copies} with the same input.
	
	Given $n$, let $V,U_1,\dots,U_h$ be sets of size $|V|=n$ and $|U_j|=m=(1 \pm \frac 12 ) n$ for $j=1,\dots,h$ such that $(V,U_j)$ is $(\eps,d)$-super-regular with respect to a graph $G$ for $j=1,\dots,h$.
	To find the family $\cH$ we will now reveal edges of $G(U_1,\dots,U_h,p)$ with probability $p \ge C n^{-1/m_1(H)}$ and consider the spanning subgraph $\tilde{F}$ of $F=F_{G,V}(U_1,\dots,U_h)$ as defined in Definitions~\ref{def:aux_F_{G,V}} and~\ref{def:aux_tilde_F_{G,V}}.
	Let $X \subseteq V$ be of size $\eta n$.
	With Lemma~\ref{lem:H_copies}, we can assume that for all $U_j' \subseteq U_j$ of size $\delta m$ for $j=1,\dots,h$ we have that \eqref{eq:copies_F} and~\eqref{eq:copies_F_good} hold, where the latter holds for all $X$ as above by a union bound.
	
	Using a random greedy process we now choose a family of transversal copies $\cH$ of $H$ of size $(1-\delta)m$ in $\tilde{F}$ as follows.
	Having chosen copies $H_1,\dots,H_t\in \tilde{F}$ with $t<(1-\delta)m$, we pick $H_{t+1}$ uniformly at random from all edges of $\tilde{F}$ that do not share an endpoint with any of $H_1,\dots,H_t$.
	This is possible since by~\eqref{eq:copies_F} there is always an edge in $\tilde F[U_1',\dots,U_h']$ for any subsets $U_j' \subseteq U$ of size at least $\delta m$ for $j=1,\dots,h$, and thus a transversal copy of $H$ in $G(U_1,\dots,U_h,p)$.
	For $i=1, \dots, t$, we denote the $i$-th chosen copy of $H$ for $\cH$ by $H_i$, by $\cH_i$ the history $H_1,\dots,H_i$, and let $\cH_0$ be empty.
	It remains to show that a.a.s. $(\cH,V)$ is $(\eps',2^{-h-3} d^{h+1})$-super-regular with respect to the auxiliary graph $\cT=\cT_G(\cH,V)$.
	
	Observe that any $H \in \cH$ has $|N_\cT(H)| \ge \tfrac12 d^h n \ge d^{h+1} 2^{-h-3} |V|$ by construction.
	Moreover for any $v \in V$ we have $|N_\cT(v)| \ge 2^{-h-3} d^{h+1} m$; this can be shown as follows.
	Consider the first $\tfrac12 dm$ chosen copies, then for $i=1,\dots,\tfrac12 dm$, by~\eqref{eq:copies_F}, there are at most $(1+\sqrt{\eps}) \prod_{j=1}^h |U_j| p^{e(H)}$ available copies to chose $H_i$ from.
	On the other hand, as long as $i< \tfrac12 dm$, the vertex $v$ has at least $\tfrac12 d m \ge \delta m$ neighbours $U_j' \subseteq U_j$ for $j=1,\dots,h$ that are not covered by the edges in  $\cH_{i-1}$. 
	Therefore, by~\eqref{eq:copies_F} there are at least $(1-\sqrt{\eps}) \prod_{j=1}^h |U_j'| p^{e(H)}$ choices for $H_i$ such that $H_i \in N_{\cT}(v)$.
	
	Hence, for $i=1,\dots,\tfrac 12 dn$, we get
	\[\PP[H_i \in N_{\cT}(v) | \cH_{i-1}] \ge \frac{(1-\sqrt{\eps}) \prod_{j=1}^h |U_j'| p^{e(H)}}{(1+\sqrt{\eps}) \prod_{j=1}^h |U_j| p^{e(H)}} \ge \frac{(1-\sqrt{\eps}) (\tfrac12 d)^h }{(1+\sqrt{\eps})} \ge 2^{-h-1} d^h.\]
	As this holds independently of the history of the process, this process dominates a binomial distribution with parameters $\tfrac12 dm$ and $2^{-h-1} d^h$.
	Therefore, even though the events are not mutually independent, we can use Chernoff's inequality (Lemma~\ref{lem:chernoff}) to infer that $|N_\cT(v)| \ge 2^{-h-3} d^{h+1} m$ with probability at least $1-n^{-2}$.
	Then, by applying the union bound over all $v \in V$, we obtain that a.a.s.~$|N_\cT(v)| \ge 2^{-h-3} d^{h+1} m \ge 2^{-h-3} d^{h+1} |\cH|$ for all $v \in V$.
	
	Next let $X \subseteq V$ be any subset with $|X| = \eta n$ and let $t=(1-\delta)m$.
	For $i=0$, $1,\dots,t-1$, we obtain from~\eqref{eq:copies_F} that there are at most $(1+\sqrt{\eps}) \prod_{j=1}^h |U_j'| p^{e(H)}$ edges in $\tilde{F} \setminus \cH_i$ available for choosing $H_{i+1}$, of which, by~\eqref{eq:copies_F_good}, at least $(1-\sqrt{\eps}) \prod_{j=1}^h |U_j'| p^{e(H)}$ are in $\tilde F_X$.
	Then
	\[ \PP \left[ H_i \text{ good for }X \Big| \cH_{i-1} \right] \ge \frac{(1-\sqrt{\eps}) \prod_{j=1}^h |U_j'| p^{e(H)}}{(1+\sqrt{\eps}) \prod_{j=1}^h |U_j'| p^{e(H)}} \ge (1-2\sqrt{\eps}).\]
	Again, as the lower bound on the probability holds independently of the history of the process, this process dominates a binomial distribution with parameters $(1-\delta)m$ and $(1-2\sqrt{\eps})$.
	We let $B_X \subseteq \cH$ be the copies in $\cH$ that are not good for $X$ and deduce
	\[\EE[|B_X|] \le (1-\delta) m \,  2 \sqrt{\eps} \le 2 \sqrt{\eps} m\, .\]
	Then we get from Chernoff's inequality (Lemma~\ref{lem:chernoff}) that, since $\gamma m \ge 14 \sqrt{\eps} m \ge 7 \EE[|B_X|]$, we have
	\[ \mathbb{P}\left[ |B_X|  > \gamma m \right] \le  \exp \left(-\gamma m \right)\,.\]	
	There are at most $\binom{n}{\eta n} \le \left(\tfrac{e}{\eta} \right)^{\eta n} \le  \exp (\eta \log(\tfrac{1}{\eta}) n) \le \exp(\tfrac 12 \gamma m)$ choices for $X$ and, thus, with the union bound over all these choices, we obtain that a.a.s.~there are at most $\gamma m$ bad copies in $\cH$ for any $X \subseteq V$ with $|X|=\eta n$.
	
	Fix a choice of $\cH$ such that there are at most $\gamma m$ bad copies for any $X \subseteq V$ with $|X|=\eta n$.
	Then for any set $X' \subseteq V$ and $\cH'\subseteq \cH$ with $|X'| \ge \eps' n$ and $|\cH'|\ge \eps' |\cH|$  we find
	\[ e_\cT(\cH',X') \ge (|\cH'| - \gamma m) \frac{d^{h} \eta n}{2} \frac{|X'|}{2\eta n} \ge \frac{d^h}{8} |\cH'| |X'|\]
	by partitioning $X'$ into pairwise disjoint sets of size $\eta n$.
	This implies that a.a.s.~the pair $(\cH,V)$ is $(\eps',d^{h+1}2^{-h-3})$-super-regular with respect to $\cT_G(\cH,V)$.
\end{proof}

\section{Proof of auxiliary lemmas}
\label{sec:auxiliary}

In this section we prove the Star Cover Lemma (Lemma~\ref{lem:multipartite2}) and then derive the Pair Cover Lemma (Lemma~\ref{lem:bipartite}).
Throughout the section we denote the square of a path on $k$ vertices by $H^{(k)}$ and we list their vertices as $u_1,\dots,u_k$, meaning that the edges of the square are $u_iu_j$ for each $1 \le |i-j| \le 2$.
We start with a short overview of our argument for the proof of Lemma~\ref{lem:multipartite2}, where we want to find the square of a Hamilton path covering $V, U_1, \dots, U_k$ and with end-tuples $(x,x')$ and $(y,y')$. 
Our proof will follow four steps, with the decomposition of this square of a Hamilton path in random and deterministic edges being outlined in Figure~\ref{fig_HC2_k3}.

\begin{figure}[htpb]
	
	\begin{tikzpicture}[scale=0.63,
		point/.style ={circle,draw=none,fill=#1,
			inner sep=0pt, minimum width=0.2cm,node contents={}}
		]
		%Square of Hamilton path
		\node (y) at (19,{2-sqrt(3)})   [point=red, label=below:$y'$];
		\node (yp) at (18,2)   [point=red, label=above:$y$];
		\node (xp) at (-4,2)   [point=red, label=above:$x$];
		\node (x) at (-5,{2-sqrt(3)})   [point=red, label=below:$x'$];
		\foreach \s in {-2,4,10,16} {
			\node (a\s) at (\s-1,{2-sqrt(3)})   [point=black];
			\node (b\s) at (\s,2)   [point=black];
			\node (c\s) at (\s+1,{2-sqrt(3)})   [point=black];
		}
		
		\node[font=\small\sffamily] (Hx) at (-2,1) {$H_x$};
		\node[font=\small\sffamily] (Hx') at (4,1) {$H_{x'}$};
		\node[font=\small\sffamily] (Hy') at (10,1) {$H_{y'}$};
		\node[font=\small\sffamily] (Hy) at (16,1) {$H_y$};
		\node () at (1,1) {$\dots \dots$};
		\node () at (7,1) {$\dots \dots$};	
		\node () at (13,1) {$\dots \dots$};	
		
		\foreach \s in {-2,4,10,16} {
			\draw[blue,dashed,line width=1pt] (a\s)--(b\s)--(c\s)--(a\s);
		}
		\draw[black,line width=1pt]
		(b-2)--(xp)--(a-2)--(x)
		(b16)--(yp)--(c16)--(y);
		
		\draw [decorate, decoration = {brace,amplitude=8pt}, thick] (-2.5,2.4) --  (4.5,2.4);
		\draw [decorate, decoration = {brace,amplitude=8pt}, thick] (9.5,2.4) --  (16.5,2.4);
		\draw [decorate, decoration = {brace,mirror,amplitude=8pt}, thick] (3.5,{2-sqrt(3)-0.4}) --  (10.5,{2-sqrt(3)-0.2});
		\node[font=\small\sffamily] () at (1,3.5) {Segment $1$};
		\node[font=\small\sffamily] () at (13,3.5) {Segment $3$};	
		\node[font=\small\sffamily] () at (7,{2-sqrt(3)-1.5}) {Segment $2$};	
	\end{tikzpicture}
	
	\captionsetup{font=footnotesize}
	\captionof{figure}{The square of a Hamilton path with end-tuples $(x,x')$ and $(y,y')$ in Lemma ~\ref{lem:multipartite2} and its decomposition into edges from $G$ (black) and from $G(n,p)$ (dashed blue). Each dotted $H_x, H_{x'}, H_{y'}, H_y$ stands for a copy of $P_k^2$ with edges all from $G(n,p)$. Segment $1$ and $3$ (resp.~segment $2$) are realised through several copies of the structure in Figure~\ref{fig_HC2_k3_2} (resp.~Figure~\ref{fig_connectHH'}).}
	\label{fig_HC2_k3}
\end{figure}

To ensure that $(x,x')$ and $(y,y')$ are the end-tuples of the square of the path, we will first find copies $H_x$ and $H_y$ of $H^{(k)}$ that are connected to the tuples $(x,x')$ and $(y,y')$ (c.f.~Figure~\ref{fig_HC2_k3}).
Moreover, with Lemma~\ref{lem:aux_reg}, we will find a large family $\cH$ of transversal copies of $H^{(k)}$ in $U_1,\dots,U_k$ (c.f.~Definition~\ref{def:transversal}) such that $(\cH,V)$ is super-regular with respect to the auxiliary graph $\cT_G(\cH,V)$ (c.f.~Definition~\ref{def:aux_T_G(H,V)}).
In particular, this will guarantee that most pairs $(H,H') \in \cH^2$ have many common neighbours in $V$ in the graph $G$.

The next step is to find random edges between the copies in $\cH$.
For that we will consider a directed auxiliary graph $\overline{F}$ with vertex set $\cH$ where, given $H,H' \in \cH$, with $H$ on $u_1,\dots,u_k$ and $H'$ on $u_1',\dots,u_k'$, the pair $(H,H')$ is an edge of $\overline{F}$ if and only if $u_ku_1'$ is a random edge and $V(H) \cup V(H')$ have many common neighbours in $V$ in the graph $G$.
This will allow us to connect $H$ to $H'$ with a random edge, while also giving many choices for vertices from $V$ to turn this into the square of a path on $2k+1$ vertices (c.f.~Figure~\ref{fig_connectHH'}).
We will use a random greedy procedure to find a long directed path $D$ in $\overline{F}$ that covers most of $\cH$, which is possible by the properties of $\cH$ and  our choice of $p \ge C n^{-1}$.
Additionally, we can guarantee that we will later be able to extend this into the square of a path using any subset of vertices $V' \subset V$ of the right size.
We denote the first and last copy of the path $D$ by $H_x'$ and $H_y'$, respectively.

\begin{figure}[!htb]
	\begin{minipage}[b]{.49\textwidth}
		\centering
		\begin{tikzpicture}[scale=0.65,
			point/.style ={circle,draw=none,fill=#1,
				inner sep=0pt, minimum width=0.2cm,node contents={}}
			]
			%Square of Hamilton path
			\node (w1) at (5,{2-sqrt(3)})   [point=black];
			\node (w2) at (4,2)   [point=black];
			\node (w3) at (3,{2-sqrt(3)})   [point=black];	
			\node (v) at (2,2)   [point=red, label=above:$v$];
			\node (x1) at (1,{2-sqrt(3)})   [point=black];
			\node (x2) at (0,2)   [point=black];
			\node (x3) at (-1,{2-sqrt(3)})   [point=black];
			
			\node[font=\small\sffamily] (H) at (0,1) {$H$};
			\node[font=\small\sffamily] (Hp) at (4,1) {$H'$};
			
			\draw[blue,dashed,line width=1pt]
			(x1)--(x2)--(x3)--(x1)
			(x3)--(w1)
			(w1)--(w2)--(w3)--(w1);
			\draw[black,line width=1pt]
			(x2)--(v)--(x1)
			(w2)--(v)--(w3);
		\end{tikzpicture}
		\captionsetup{font=footnotesize}
		\captionof{figure}{Connecting two copies $H$ and $H'$ of $P_k^2$, using one vertex $v \in V$ (red), edges from $G(n,p)$ (dashed blue) and edges from $G$ (black).}
		\label{fig_connectHH'}
	\end{minipage}
	\begin{minipage}[b]{.5\textwidth}
		\centering
		\begin{tikzpicture}[scale=0.65,
			point/.style ={circle,draw=none,fill=#1,
				inner sep=0pt, minimum width=0.2cm,node contents={}}
			]
			%Square of Hamilton path
			\node (y1) at (9,{2-sqrt(3)})   [point=black];
			\node (y2) at (8,2)   [point=black];
			\node (y3) at (7,{2-sqrt(3)})   [point=black];
			\node (v) at (6,2)   [point=red];
			\node (w1) at (5,{2-sqrt(3)})   [point=red];
			\node (w2) at (4,2)   [point=black, label=above:$z$];
			\node (w3) at (3,{2-sqrt(3)})   [point=red];	
			\node (vp) at (2,2)   [point=red];
			\node (x1) at (1,{2-sqrt(3)})   [point=black];
			\node (x2) at (0,2)   [point=black];
			\node (x3) at (-1,{2-sqrt(3)})   [point=black];
			
			\node[font=\small\sffamily] (H) at (0,1) {$H$};
			\node[font=\small\sffamily] (Hp) at (8,1) {$H'$};
			
			\draw[blue,dashed,line width=1pt]
			(x1)--(x2)--(x3)--(x1)
			(v)--(w1)--(w3)--(vp)
			(y1)--(y2)--(y3)--(y1);
			\draw[black,line width=1pt]
			(y2)--(v)--(y3)--(w1)--(w2)--(v)
			(x2)--(vp)--(x1)--(w3)--(w2)--(vp);
		\end{tikzpicture}
		\captionsetup{font=footnotesize}
		\captionof{figure}{Absorbing a vertex $z \in Z \subseteq U_1 \cup \dots \cup U_k$, using two copies $H$ and $H'$ of $P_k^2$, four vertices from $V$ (red), edges from $G(n,p)$ (dashed blue) and edges from $G$ (black).}
		\label{fig_HC2_k3_2}
	\end{minipage}
\end{figure}

In the next step, we take care of the set $Z$ of those vertices in $U_1 \cup \dots \cup U_k$ that are not covered by any copy of $H^{(k)}$ from $\cH$.
We will absorb each vertex $z \in Z$ into the square of a short path, using four vertices from $V$, two copies of $H^{(k)}$ in $\cH \setminus V(D)$ and random edges within $V$ (c.f.~Figure~\ref{fig_HC2_k3_2}).
In fact, we will be able to do that simultaneously for each vertex in $Z$, by constructing two squares of paths, one from $H_x$ to $H_x'$ and one from $H_y'$ to $H_y$, which contain all vertices of $Z$ and all copies of $H^{(k)}$ in $\cH \setminus V(D)$.

In the final step, we will find a perfect matching between the edges $(H,H')$ of $D$ and the remaining vertices of $V$, while making sure that the sizes of the two sets we want to match are the same.
A vertex $v \in V$ can be matched to $(H,H')$ if and only if $u_k,u_{k-1},u_1',u_2'$ are neighbours of $v$ in $G$ (with the labelling of the vertices of $H,H'$ as above).
This matching will close the gap between the two copies $H$ and $H'$ for each edge $(H,H')$ of $D$ with a vertex $v$ from $V$.
This will give the square of a path from $H_x'$ to $H_y'$ and, together with the other pieces from $H_x$ to $H_x'$ and from $H_y$ to $H_y'$, we will get the square of a Hamilton path with the correct end-tuples.
Ultimately, the shape of this square of the path is as illustrated in Figure~\ref{fig_HC2_k3}, where the segments between $H_x$ and $H_{x'}$ and between $H_{y'}$ and $H_y$ (resp.~between $H_{x'}$ and $H_{y'}$) are obtained by repeatedly inserting Figure~\ref{fig_HC2_k3_2} (resp.~Figure~\ref{fig_connectHH'}) several times.
We will now turn to the details of the argument.

\begin{proof}[Proof of Lemma~\ref{lem:multipartite2}]
	Given an integer $k \ge 2$, let $H^{(k)}$ be the square of a path on $k$ vertices and observe that $m_1(H^{(k)})=\tfrac{2k-3}{k-1}$.
	Given $0 < \delta' \le d \le 1$, let $\delta_1,\delta_0,\eps'>0$ with $2\delta_1 < \delta_0 < \min\{ \delta', d^{3k+3} 2^{-5k-22} \}$ and $\eps'< \delta_1^8$.
	Let $C_{\ref{lem:embedding}}$ be given by Lemma~\ref{lem:embedding} for input $2 \delta_1$ and $F$, where $F$ is the path on four vertices.
	Then let $\eps_{\ref{lem:aux_reg}}$ and $C_{\ref{lem:aux_reg}}$ be given by Lemma~\ref{lem:aux_reg} for input $H^{(k)}$, $d/2$, $\eps'$ and $\eps'/2$, where $\eps'$ plays the role of $\delta$ in the statement of Lemma~\ref{lem:aux_reg}.
	Let $\eps_{\ref{lem:H_copies}}$ and $C_{\ref{lem:H_copies}}$ be given by Lemma~\ref{lem:H_copies} with input $H^{(k)}$ and $\delta_0$.
	Finally let $\eps < \min\{\eps_{\ref{lem:aux_reg}}/2, \eps_{\ref{lem:H_copies}}, \eps'/4  \}$ and $C = \max \{ C_{\ref{lem:embedding}}, 2C_{\ref{lem:aux_reg}},2C_{\ref{lem:H_copies}}, 48 \eps'^{-1} \delta^{-1} \}$.
	Observe that $\eps < 2\delta_1$, as required. 
	The constant dependencies can be summarised as follows:
	\[
	\eps \ll \eps' \ll \delta_1 \ll \delta_0 \ll \delta' \le d\, .
	\]
	
	Let $G$ be a graph on $V \cup U_1 \cup \dots \cup U_k$, where $V,U_1,\dots,U_k$ are pairwise disjoint sets of size $|V|=n+4$ and $(1-\delta_0)n \le |U_i|=m \le (1-\delta_1)n$ for $i=1,\dots,k$ such that $n-m \equiv -1 \pmod{3k-1}$.
	Suppose that $(V,U_i)$ is a $(\eps,d)$-super-regular pair for $i=1,\dots,k$.
	Further let $(x,x')$ and $(y,y')$ be two tuples from $V$ such that both tuples have $d^2m/2$ common neighbours in $U_i$ for $i=1,\dots,k$ in the graph $G$.
	Let $\delta$ be such that $m=(1-\delta)n+1$ and observe that $\delta_1 \le \delta \le \delta_0+1/n$ and that the divisibility condition on $n-m$ implies that $\delta n \equiv 0 \pmod{3k-1}$.
	For later convenience, we define $m_0=\left(1-\tfrac{3k}{3k-1} \delta\right)n-1$, $t=\left(1-\tfrac{4k}{3k-1} \delta\right)n+1$ and $s=\frac{k}{3k-1}\delta n$.
	Since $\delta n \equiv 0 \pmod{3k-1}$, we observe that $m_0$, $t$ and $s$ are positive integers.
	Moreover, we have that
	\begin{gather}
		\label{eq:m_0} m_0 - t = s -2 \, ,\\
		\label{eq:s/k} (1-\delta)n-1-m_0= \frac{s}{k} \, ,\\
		\label{eq:t-1} n-4s = t-1 \, . 
	\end{gather}
	We reveal random edges in three rounds $G_1 \sim G(U_1,\dots,U_k, \tfrac 12 p)$, $G_2 \sim G(U_1,\dots,U_k,\tfrac 12 p)$ and $G_3 \sim G(V,p)$, where $p \ge C n^{-(k-1)/(2k-3)}$.
	Moreover we assume that a.a.s.~the events of Lemma~\ref{lem:embedding} and~\ref{lem:H_copies} hold in $G_3$ and $G_1$, respectively.
	
	\textbf{Finding transversal copies of $H^{(k)}$.}
	We start by ensuring that $(x,x')$ and $(y,y')$ can be the end-tuples of the square of a path.
	Fix $i=1,\dots,k$.
	Recall that each of the tuples $(x,x')$ and $(y,y')$ has $d^2 m /2$ common neighbours in $U_i$. 
	Thus we can pick disjoint sets $U_{i,x},U_{i,y} \subset U_i$ of size $d^2m/4$ such that $U_{i,x}$ and $U_{i,y}$ are in the common neighbourhoods of $(x,x')$ and $(y,y')$ in $U_i$, respectively.
	Let $F$ and $\tilde{F}$ be the hypergraphs defined in Definition~\ref{def:aux_F_{G,V}} and~\ref{def:aux_tilde_F_{G,V}}, and with $H^{(k)}$ and $\tilde{F}$ supported by $G_1$.
	Then, as we assumed that Lemma~\ref{lem:H_copies} holds in $G_1$, we find an edge in $\tilde F[U_{1,x},\dots,U_{k,x}]$ and an edge in $\tilde F[U_{1,y},\dots,U_{k,y}]$.
	Given the definition of $\tilde{F}$, these edges correspond to copies of $H^{(k)}$ in $G_1$ and we denote them by $H_x=(x_1,\dots,x_k)$ and $H_y=(y_1,\dots,y_k)$.
	
	For $i=1,\dots,k$, let $U_i'=U_i \setminus \{ x_i,y_i \}$ and observe that $|U_i'|=(1-\delta)n-1$ and $(U_i',V)$ is $(2\eps,d/2)$-super-regular.
	Then we apply Lemma~\ref{lem:aux_reg} with $G_2[U_1',\dots,U_k']$ to a.a.s.~obtain a family $\cH$ of transversal copies of $H^{(k)}$, of size $|\cH| \ge (1-\eps') (m-2) \ge (1-\tfrac{3k}{3k-1} \delta)n-1=m_0$ and such that the pair $(\cH,V)$ is $(\eps'/2,d^{k+1} 2^{-2k-4})$-super-regular with respect to $\cT_G(\cH,V)$, where $\cT_G(\cH,V)$ is the graph defined in Definition~\ref{def:aux_T_G(H,V)}.
	By removing some copies of $H^{(k)}$, we can assume that $|\cH| = m_0$ and still have that the pair $(\cH,V)$ is $(\eps',d^{k+1} 2^{-2k-5})$-super-regular with respect to $\cT_G(\cH,V)$.
	
	\textbf{Building the directed path $D$.}	
	Ultimately we want to find a directed path $D$, that has some of the copies of $H^{(k)}$ in $\cH$ as vertices.
	As we later would like to connect two copies of $H^{(k)}$ by one random edge and a vertex from $V$ to get the square of a path on $2k+1$ vertices (c.f.~Figure~\ref{fig_connectHH'}), we want them to appear consecutively in $D$ only if all their vertices have enough common neighbours in $V$ in the graph $G$.
	We encode this condition in the auxiliary graph $F^\ast$ with vertex set $\cH$ and where, given $H, H' \in \cH$, the edge $HH'$ is in $F^\ast$ if and only if the vertices in $V(H) \cup V(H')$ have at least $d^{2k+2} 2^{-4k-8} n$ common neighbours in $V$ in the graph $G$.
	
	\begin{claim}
		\label{claim:min_degree3}
		The minimum degree of $F^\ast$ is at least $(1-2\eps')m_0$.
	\end{claim}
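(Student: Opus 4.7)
The plan is to reduce the claim directly to the super-regularity of $(\cH,V)$ with respect to $\cT:=\cT_G(\cH,V)$ established just before it. The key observation is that for any $H,H'\in\cH$, the common $V$-neighbourhood in $G$ of the vertices in $V(H)\cup V(H')$ is exactly $N_\cT(H)\cap N_\cT(H')$, since by definition a vertex $v\in V$ is joined in $\cT$ to a copy of $H^{(k)}$ iff $v$ is adjacent in $G$ to every vertex of that copy. Hence the claim is equivalent to a minimum-degree statement in an auxiliary graph on $\cH$ whose edges are dictated by the bipartite graph $\cT$.

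I would fix $H\in\cH$, set $X:=N_\cT(H)\subseteq V$, and note that the super-regularity parameter after the final truncation is $d^\ast:=d^{k+1}2^{-k-4}$. The minimum-degree part of super-regularity gives $|X|\ge d^\ast|V|$, which (since $\eps'\ll d$) exceeds $\eps'|V|$. For each $H'\in\cH$ the quantity to estimate, $|N_\cT(H)\cap N_\cT(H')|$, is precisely $\deg_\cT(H',X)$. Introduce the exceptional set
\[
B_H:=\bigl\{H'\in\cH:\deg_\cT(H',X)<d^{2k+2}2^{-2k-7}n\bigr\};
\]
the claim is the statement $|B_H|\le 2\eps'm_0$.

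I would establish this bound by contradiction. Suppose $|B_H|>2\eps'm_0\ge\eps'm_0$. Since $|B_H|\ge\eps'|\cH|$ and $|X|\ge\eps'|V|$, the pair $(B_H,X)$ qualifies for the density part of super-regularity of $(\cH,V)$, which yields
\[
e_\cT(B_H,X)\ge d^\ast\,|B_H|\,|X|\ge (d^\ast)^2\,|B_H|\,|V|.
\]
On the other hand, summing the defining inequality of $B_H$ over its members gives
\[
e_\cT(B_H,X)=\sum_{H'\in B_H}\deg_\cT(H',X)<|B_H|\cdot d^{2k+2}2^{-2k-7}n.
\]
Comparing the two bounds and using $|V|=n+4$ together with $(d^\ast)^2=d^{2k+2}2^{-2k-8}$ yields the required contradiction once the constants are tracked carefully; hence $|B_H|\le 2\eps'm_0$. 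Taking the intersection over all $H\in\cH$ of the `good' partners then shows that $\delta(F^\ast)\ge(1-2\eps')m_0$, as desired.

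The argument is essentially immediate once Lemma~\ref{lem:aux_reg} is in hand, so there is no real obstacle; the only care required is in the bookkeeping of constants, verifying that $\eps'$ is small enough so that both $X$ and $B_H$ meet the $\eps'$-regularity threshold of $(\cH,V)$, and that the common-neighbourhood threshold $d^{2k+2}2^{-2k-7}n$ in the definition of $F^\ast$ is compatible with the square $(d^\ast)^2|V|$ coming out of the density bound.
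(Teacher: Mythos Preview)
Your approach is exactly the paper's: fix $H$, set $X=N_\cT(H)$, and use the density lower bound of the super-regular pair $(\cH,V)$ in $\cT$ to bound the number of copies $H'$ with small $\cT$-degree into $X$. However, the constant tracking does not close as you have written it. With $d^\ast=d^{k+1}2^{-k-4}$ you obtain $(d^\ast)^2|V|=d^{2k+2}2^{-2k-8}(n+4)$, which for large $n$ is strictly \emph{smaller} than the $F^\ast$-threshold $d^{2k+2}2^{-2k-7}n$; hence your two displayed inequalities for $e_\cT(B_H,X)$ are compatible and no contradiction arises.

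The fix, which is what the paper does, is to use the sharper bound $|X|\ge d^{k+1}2^{-k-3}n$. The $\cT$-degree of a single copy $H$ into $V$ depends only on the $G$-neighbourhoods of the vertices in $V(H)$ and is therefore unaffected by removing other copies from $\cH$; so the original degree bound from Lemma~\ref{lem:aux_reg} survives the truncation of $\cH$ down to size $m_0$, even though the density parameter of the pair had to be weakened to $d^{k+1}2^{-k-4}$. With this, the density condition yields $e_\cT(B_H,X)\ge d^{k+1}2^{-k-4}\,|B_H|\,|X|\ge d^{2k+2}2^{-2k-7}\,|B_H|\,n$, which now matches the threshold in the definition of $F^\ast$ and gives the required contradiction.
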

	
	\begin{claimproof}[Proof of Claim~\ref{claim:min_degree3}]
		Any copy of $H \in \cH$ has degree at least $d^{k+1} 2^{-2k-4} n$ into $V$ in the graph $\cT_G(\cH,V)$.
		Then, by super-regularity in $\cT_G(\cH,V)$, all but $2 \eps' m_0$ copies of $H' \in \cH$ have at least $d^{2k+2} 2^{-4k-8} n$ common neighbours with $H$.
		This implies that $H$ has degree $(1-2\eps')m_0$ in $F^\ast$.
	\end{claimproof}
	
	For a set $X \subseteq V$, we call an edge $HH' \in E(F^\ast)$ \emph{good for $X$} if there is at least one vertex $v \in X$ that is incident to $H$ and $H'$ in $\cT_G(\cH,V)$.
	We denote the subgraph of $F^\ast$ with edges that are good for $X$ by $F^\ast_X$.
	
	\begin{claim}
		\label{claim:degree_good3}
		If $|X| \ge d^{-k-1} 2^{2k+5} \eps' n$, then all but at most $\eps' n$ vertices of $\cH$ have degree at least $(1-4\eps')m_0$ in $F^\ast_X$.
	\end{claim}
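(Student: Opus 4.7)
The plan is to apply the Minimum Degree Lemma (Lemma~\ref{lem:MDL}) twice to the super-regular pair $(\cH, V)$ and then combine with Claim~\ref{claim:min_degree3}. Recall that $(\cH, V)$ is $(\eps', d^{k+1}2^{-k-4})$-super-regular with respect to $\cT := \cT_G(\cH, V)$, so in particular this pair is $\eps'$-regular with density at least $d^{k+1}2^{-k-4}$.

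First, I would apply Lemma~\ref{lem:MDL} with $X \subseteq V$ playing the role of $Y$: since $|X| \ge d^{-k-1}2^{k+4}\eps' n \ge \eps'|V|$, all but at most $\eps' m_0$ copies $H \in \cH$ satisfy
\[
|N_\cT(H) \cap X| \ge (d^{k+1}2^{-k-4} - \eps')|X| \ge \tfrac12 d^{k+1}2^{-k-4}|X|,
\]
using that $\eps'$ was chosen small. Denote this subfamily by $\cH_1$; the copies in $\cH \setminus \cH_1$ account for the $\eps' n$ exceptional vertices allowed by the claim.

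Second, for each $H \in \cH_1$ set $Y_H := N_\cT(H) \cap X$. The precise threshold $|X| \ge d^{-k-1}2^{k+4}\eps' n$ in the hypothesis is tuned so that $|Y_H| \ge \eps'|V|$, which is exactly what is needed to re-apply Lemma~\ref{lem:MDL} with $Y_H$ in place of $Y$: all but at most $\eps' m_0$ copies $H' \in \cH$ satisfy
\[
|N_\cT(H') \cap Y_H| \ge (d^{k+1}2^{-k-4} - \eps')|Y_H| > 0,
\]
meaning $H$ and $H'$ share a common neighbour in $X$, i.e., $HH'$ is good for $X$.

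Finally, I combine with Claim~\ref{claim:min_degree3}, which says that for every $H$ at most $2\eps' m_0$ copies $H' \in \cH$ satisfy $HH' \notin F^\ast$. Together with the previous step, for each $H \in \cH_1$ the number of $H' \in \cH$ failing $HH' \in F^\ast_X$ is at most $3\eps' m_0$, so $\deg_{F^\ast_X}(H) \ge (1 - 3\eps')m_0 \ge (1 - 4\eps')m_0$. The only slightly delicate point is the calibration of the constant $2^{k+4}$ in the hypothesis on $|X|$, which is tuned precisely to allow the double application of the Minimum Degree Lemma; beyond this bookkeeping no conceptual obstacle is anticipated.
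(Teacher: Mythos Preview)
Your proposal is correct and follows essentially the same route as the paper: apply the minimum-degree consequence of super-regularity of $(\cH,V)$ once to isolate the good copies $\cH_1$, apply it again to $Y_H=N_\cT(H)\cap X$ for each $H\in\cH_1$, and then intersect with Claim~\ref{claim:min_degree3}. One small remark on your ``slightly delicate point'': with the paper's definition of $(\eps',d')$-super-regularity (density at least $d'$ on all large sub-pairs), the one-sided minimum-degree lemma gives $\deg_\cT(H,Y)\ge d'|Y|$ rather than $(d'-\eps')|Y|$, so your factor-of-$\tfrac12$ loss is not actually incurred and the threshold $|X|\ge d^{-k-1}2^{k+4}\eps' n$ yields $|Y_H|\ge \eps' n$ directly.
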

	
	\begin{claimproof}[Proof of Claim~\ref{claim:degree_good3}]
		By super-regularity, all but at most $\eps' n$ copies $H \in \cH$ have degree at least $d^{k+1} 2^{-2k-5} |X| \ge \eps' n$ into $X$ in the graph $\cT_G(\cH,V)$.
		Fixing any $H \in \cH$ with this property, all but at most $\eps' m_0$ copies $H' \in \cH \setminus\{H\}$ have at least one common neighbour with $H$ in $X$.
		From Claim~\ref{claim:min_degree3} we know that $\delta(F^\ast) \ge (1- 2\eps')m_0$ and, therefore, all but at most $\eps' n$ vertices from $\cH$ have degree at least $(1-4 \eps')m_0$ in $F^\ast_X$.
	\end{claimproof}
	
	We now define an auxiliary directed graph $\bar F$ on vertex set $\cH$ as follows.
	Given any $H$ and $H' \in \cH$ with $H=u_1,\dots,u_k$ and $H'=u_1',\dots,u_k'$, the pair $(H,H')$ is a directed edge of $\bar F$ if and only if $HH'$ is an edge of $F^\ast$ (which means that $H$ and $H'$ have many common neighbours in $V$ in $G$) and $u_ku_1'$ is an edge of $G_2$.
	As $G_2 \sim G(U_1,\dots,U_k, \tfrac12 p)$, observe that $\bar F$ is a random directed graph, which, for each edge $HH'$ of $F^\star$, contains each of the directed edges $(H,H')$ and $(H',H)$ with probability $p/2$, independently of each other and of all other edges.
	Therefore, given an edge $HH'$ of $F^\star$, we can talk about \emph{revealing} $(H,H')$ where, actually, we reveal the edge $u_ku_1'$ in $G_2$.
	If indeed $u_ku_1'$ is an edge of $G_2$, then we say that the edge $(H,H')$ is \emph{successful}, as it belongs to $\bar{F}$.
	
	We recall that we want to find a long directed path $D$ in $\bar F$ (see Claim \ref{claim:directed_path} below), whose edges satisfy additional properties (see Claim \ref{claim:H_matching2} below).
	For this we will use a random greedy process that explores $\bar F$ using a depth-first search algorithm.
	We do not reveal all the edges of $\bar F$ at the beginning, but, at each step of the algorithm, we only reveal those edges that are relevant for that step.
	In each step, the algorithm maintains a directed path $H_1,\dots,H_r$ in $\bar{F}$ and a set $B$ consisting of the vertices in $\cH \setminus \{ H_1,\dots,H_r \}$ whose neighbours have already been all revealed (we call them ``dead-ends").
	Additionally, we keep track of the vertices which have already been visited at least twice (due to backtracking of the algorithm) and denote their set by $A$, for which we note that $|A| \le |B|$.
	We initialise $r=0$, $A=\emptyset$ and $B=\emptyset$.
	
	The algorithm proceeds as follows.
	If $r=0$, then we choose an arbitrary vertex $H_1 \in \cH \setminus B$ and increase $r$ by one.
	If $r>0$, we let $\cH' = \cH \setminus (\{H_1,\dots,H_r\} \cup B)$ be the vertices that have not been used and that are not ``dead-ends".
	If $H_r \not\in A$, then, for all edges $H_rH'$ in $F^*$ with $H' \in \cH'$, we reveal the directed edge $(H_r, H')$ in $\bar F$ with probability $p/2$ independently of all other such
	edges.
	If possible, we pick one neighbour uniformly at random among all those that are successful, denote it by $H_{r+1}$ and increase $r$ by one.
	If none of them is successful, we add $H_{r}$ to $B$, $H_{r-1}$ to $A$ and decrease $r$ by one.
	If $H_r \in A$, then all the directed edges of the form $(H_r,H')$ with $H_rH' \in E(F^*)$ and $H' \in \cH'$ have already been revealed earlier in $\bar F$.
	If there is such an edge $(H_r,H')$ in $\bar F$, we let $H_{r+1}=H'$ and increase $r$ by one.
	Otherwise, we add $H_{r}$ to $B$, $H_{r-1}$ to $A$ and decrease $r$ by one.
	The algorithm stops if $r=(1-\tfrac{4k}{3k-1} \delta)n+1$ or when $|B| \ge \eps' n$, whichever happens first.
	We claim that the algorithm terminates and the latter does not happen.
	We remark that while the algorithm picks one neighbour uniformly at random among all those that are successful, the next claim would hold even if the choice of the neighbour was arbitrary.
	
	\begin{claim}
		\label{claim:directed_path}
		A.a.s.~the graph $\bar F$ contains a directed path $D$ on $t$ vertices (with $t$ being $(1-\tfrac{4k}{3k-1} \delta)n+1$, as defined above).
	\end{claim}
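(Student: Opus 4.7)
The plan is to analyse the random DFS procedure and show that with high probability the dead-end set $B$ never reaches size $\eps' n$ before $s$ reaches the target length $t$. First I would establish the key counting bound: whenever the algorithm is at a first-visit vertex $H_s \notin A$ with $s < t$ and $|B| < \eps' n$, Claim~\ref{claim:min_degree3} together with the sizes of the path and of $B$ yield
\[
|N_{F^\ast}(H_s) \cap \cH'| \ge (1 - 2\eps') m_0 - s - |B| \ge \left( \tfrac{k}{3k-1}\delta - 3\eps' \right) n \ge \tfrac{\delta n}{6},
\]
using $\eps' \ll \delta_1 \le \delta$ and $\tfrac{k}{3k-1} \ge \tfrac{1}{3}$ for all $k \ge 2$.

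Next, each potential directed edge $(H_s, H')$ with $HH' \in E(F^\ast)$ is placed in $\bar F$ independently with probability $p/2$, since its presence depends only on the unique $G_1$-edge between the last vertex of $H_s$ and the first vertex of $H'$, and the relevant pairs of vertices are all distinct across different first visits (because the chosen copies in $\cH$ are vertex-disjoint and each first visit uses a fresh $H_s$). Hence at each first visit the conditional probability (given the entire history up to that visit) that $H_s$ is a dead-end is at most
\[
(1 - p/2)^{\delta n / 6} \le \exp\!\left( -\tfrac{p\, \delta n}{12} \right) \le \exp\!\left( -\tfrac{C \delta}{12} \right) \le \exp(-4/\eps') =: q,
\]
where the second inequality uses $p \ge C n^{-(k-1)/(2k-3)}$ (giving $pn \ge C$ already at $k=2$ and $pn \to \infty$ for $k \ge 3$), and the last inequality uses $C \ge 48 (\eps' \delta)^{-1}$.

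Since every vertex of $\cH$ has its outgoing edges revealed at most once, the sequence of dead-end indicators across the at most $m_0 \le n$ first visits is stochastically dominated by i.i.d.\ Bernoulli$(q)$ random variables. Chernoff's inequality (Lemma~\ref{lem:chernoff}) then implies that the total number of dead-ends is at most $\eps' n / 2$ with probability $1 - o(1)$, because $q$ is much smaller than $\eps'$. Consequently $|B| < \eps' n$ throughout the process, so the stopping clause $|B| \ge \eps' n$ is never triggered and the algorithm must instead terminate with $s = t$, producing the desired directed path $D$ in $\bar F$ on $t$ vertices.

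The main obstacle is the case $k = 2$, where $p = \Theta(n^{-1})$ and each first visit receives only a \emph{constant-size} batch of Bernoulli trials; the choice $C \ge 48 (\eps' \delta)^{-1}$ is made precisely so that even in this borderline regime the per-step dead-end probability $q$ is exponentially small in $1/\eps'$, which is what lets a single Chernoff bound across all first visits close the argument.
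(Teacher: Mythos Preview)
Your per–first-visit estimate and the stochastic domination step are fine, but the argument does \emph{not} control $|B|$. The set $B$ is incremented not only at first visits: when $H_s\in A$ (a revisit) and $H_s$ has no $\bar F$-edge into the current $\cH'$, the algorithm also adds $H_s$ to $B$, and this step involves no new randomness at all. Concretely, it can happen that at its first visit $H_s$ had exactly one successful out-edge (to $H_{s+1}$), then $H_{s+1}$ turned out to be a first-visit dead-end, and on backtracking $H_s$ becomes a revisit dead-end and is added to $B$. Iterating this along a chain $H_1\to H_2\to\cdots\to H_\ell$ one sees that a single first-visit dead-end can cascade into $\ell$ additions to $B$. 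Since for $k=2$ we only have $pn=\Theta(1)$, the expected number of successful out-edges at a first visit is a constant, so one cannot rule out such chains by arguing that every first-visited vertex has, say, at least $\eps'n$ out-edges in $\bar F$. Hence bounding the number of first-visit dead-ends by $\eps' n/2$ does not yield $|B|<\eps' n$.

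The paper sidesteps this issue by arguing about the terminal configuration rather than step by step. If the algorithm ever reaches $|B|=\eps' n$, one fixes a set $\cH'$ of $\delta n/4$ vertices that are neither on the path nor in $B$; since $\cH'\subseteq\cH'_{\mathrm{add}(H)}$ for every $H\in B$, no $F^\ast$-edge from $B$ to $\cH'$ appears in $\bar F$. For any fixed $B$ and $\cH'$ this is an event involving $\Theta(\eps'\delta n^2)$ independent Bernoulli($p/2$) trials, with failure probability $\exp(-\Omega(p\,\eps'\delta n^2))\le \exp(-2n)$ by the choice $C\ge 48(\eps'\delta)^{-1}$, and a union bound over the at most $2^{2n}$ choices of $(B,\cH')$ finishes the proof. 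The point is that this argument automatically covers both first-visit and revisit dead-ends, which your domination-and-Chernoff approach does not.
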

	
	\begin{claimproof}[Proof of Claim~\ref{claim:directed_path}]
		First, we observe that the algorithm terminates.
		Indeed, if $|B|< \eps'n$ then $|\cH'| \ge |\cH|-t-|B| > 2 \eps' m_0$ and with Claim~\ref{claim:min_degree3} there is at least one edge of $F^*$ from $H_r$ to $\cH'$.
		Secondly, we claim that a.a.s.~$|B| < \eps' n$.
		Assume that at some point we have $|B|=\eps' n$ and $r < t= (1-\tfrac{4k}{3k-1}\delta)n+1$.
		Since at least $|\cH|-r - |B| \ge \delta n/4$ vertices of $\cH$ are not covered by the path or a vertex from $B$, we can pick a set $\cH'$ of exactly $\delta n/4$ of them.	
		This implies that all edges from $B$ to $\cH'$ that are in $F^*$ have been revealed but none was successful to be present in $\bar F$.
		However, with Claim~\ref{claim:min_degree3}, the expected number of edges in $\bar F$ from $B$ to $\cH'$ with $|B|=\eps' n$ and $|\cH'|=\tfrac14 \delta n$ is $\tfrac18 p \eps' \delta n^2$ and by Chernoff's inequality (Lemma~\ref{lem:chernoff}) the probability that none of the edges in $F^*$ from $B$ to $\cH'$ appears in $\bar F$ is at most $2\exp(-\tfrac{1}{24} \eps' p \delta n^2) \le 2\exp(-2n)$.
		A union bound over the at most $2^{2n}$ choices for $B$ and $\cH'$ implies that the probability that there exist $B$ and $\cH'$ as above is $o(1)$.
		Therefore the process stops when $r=t$ and we obtain a directed path $D$ on $t$ vertices in $\bar F$.		
	\end{claimproof}
	
	\textbf{Preparing the final matching.}	
	The previous claim already guarantees a long directed path $D$ on $t$ vertices, but  to finish the proof later we need an additional property of $D$.
	An edge of $D$ corresponds to an edge $HH'$ of $F^*$ and, for each of them, there are many choices for a vertex $v \in V$ that turns this into the square of a path on $2k+1$ vertices.
	We will need to do this simultaneously for all edges of $D$ in the last step of the proof.
	However, before the last step, we have to cover the leftover of $U_1 \cup \dots \cup U_k$ and $\cH$, which will be possible by using some vertices of $V$.
	This will leave a subset $V' \subseteq V$ of size $t-1$ to match to the edges of $D$ in the last step, where we remark that the path $D$ has exactly $t-1$ edges.
	We now show that this is possible for any subset $V' \subseteq V$ of size $t-1$.
	We encode this task as follows.
	Given a vertex $v \in V$, we define $\cF_v$ to be the set of all pairs $(H,H') \in \cH^2$ such that both $H$ and $H'$ are adjacent to $v$ in $\cT_G(\cH,V)$; note that this means that $v$ is adjacent to all vertices in $V(H) \cup V(H')$ in the graph $G$.
	Then we define an auxiliary bipartite graph $\cF_D$ with partition $E(D)$ and $V$, where for $e \in E(D)$ and $v \in V$, the pair $ev$ is an edge of $\cF_D$ if and only if $e \in \cF_v$.
	
	\begin{claim}
		\label{claim:H_matching2}
		Assume that $D$ has $t-1$ edges.
		Then a.a.s.~for any $V' \subseteq V$ of size $t-1$ the graph $\cF_D[V',E(D)]$ contains a perfect matching.
	\end{claim}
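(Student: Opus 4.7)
The plan is to establish Hall's condition for the bipartite graph $\cF_D[V',E(D)]$, which then yields a matching saturating $E(D)$ since $|V'|=t>t-1=|E(D)|$. The key inputs will be two minimum-degree properties of $\cF_D$: (a) every $e\in E(D)$ has at least $\tfrac12 d^{2k+2}2^{-2k-7}n$ neighbours in $V'$; and (b) a.a.s.~every $v\in V$ has at least $\eta n$ neighbours in $E(D)$ in $\cF_D$, for some constant $\eta=\eta(d,k)>0$.

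Property (a) is immediate from $E(D)\subseteq E(F^\ast)$: by construction of $F^\ast$ each such edge $e=(H,H')$ satisfies $|N_{\cT}(H)\cap N_{\cT}(H')|\ge d^{2k+2}2^{-2k-7}n$ in $V$, and $|V\setminus V'|=4+\tfrac{4k}{3k-1}\delta n$ is much smaller than this for $\delta\le\delta_0$, so (a) follows.

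Property (b) is the main technical obstacle, and for it I would analyse the greedy random walk that produced $D$. Fix $v\in V$: its degree in $\cF_D$ counts indices $i\in[t-1]$ with both $H_i,H_{i+1}\in N_{\cT}(v)$. By the super-regularity of $(\cH,V)$ from Lemma~\ref{lem:aux_reg} combined with Claims~\ref{claim:min_degree3} and~\ref{claim:degree_good3} applied with a linear-sized $X\subseteq V$ containing $v$, at each step of the walk the conditional probability that the newly chosen endpoint lies in $N_{\cT}(v)$ is bounded below by a positive constant depending only on $d$ and $k$, uniformly in the history. Consequently $d_{\cF_D}(v)$ stochastically dominates a sum of independent Bernoulli variables of linear mean, and a Chernoff/Azuma-type concentration yields $d_{\cF_D}(v)\ge\eta n$ with probability at least $1-e^{-\Omega(n)}$. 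A union bound over $v\in V$ then establishes (b) for all $v$ simultaneously.

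With (a) and (b) in place, I would verify Hall's condition by a case split on $|S|$. For $|S|\le\tfrac14 d^{2k+2}2^{-2k-7}n$, property (a) alone gives $|N_{\cF_D}(S)\cap V'|\ge|S|$ through a single edge of $S$. For $|S|\ge t-1-\tfrac{\eta n}{2}$, property (b) forces every $v\in V'$ to have a neighbour in $S$, since fewer than $\tfrac{\eta n}{2}$ of its $\ge\eta n$ neighbours in $E(D)$ can lie in $E(D)\setminus S$; hence $N_{\cF_D}(S)\cap V'=V'$. For the intermediate range, I would establish super-regularity of $\cF_D$, namely a density lower bound $d_{\cF_D}(X,Y)\ge\mu$ for all linear-sized $X\subseteq E(D)$, $Y\subseteq V'$, which follows by combining the super-regularity of $\cT_G(\cH,V)$ with the quasi-random behaviour of the greedy walk. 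Then any violation of Hall's condition with both $|S|$ and $|V'\setminus N_{\cF_D}(S)|$ of linear size would force a positive density of edges between them, contradicting the defining edge-disjointness and concluding the verification.
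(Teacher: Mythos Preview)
Your overall plan—verify Hall's condition via a three-range case split, with the extreme ranges handled by the deterministic degree from $F^\ast$ (your~(a)) and a per-vertex degree bound coming from the random greedy walk (your~(b))—is exactly the paper's approach. Two points need attention, one minor and one substantial.

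\textbf{Minor.} In~(b) you lower-bound the conditional probability that a \emph{single} newly chosen endpoint lies in $N_\cT(v)$; but $d_{\cF_D}(v)$ counts \emph{consecutive pairs} $(H_i,H_{i+1})$ with both in $N_\cT(v)$. The paper therefore analyses two steps at a time, bounding $\PP[(H_{s+1},H_{s+2})\in\cF_v\mid\text{history}]\ge d^{2k+2}2^{-2k-11}$ directly; your one-step formulation does not give this without an extra argument. Also, invoking Claim~\ref{claim:degree_good3} ``with a linear-sized $X$ containing $v$'' is not what is used here; for a single vertex the paper uses only the super-regularity of $(\cH,V)$ in $\cT_G(\cH,V)$, and Claim~\ref{claim:degree_good3} enters only in the intermediate range.

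\textbf{Substantial.} The intermediate range is where the real work lies, and your appeal to ``super-regularity of $\cF_D$ \ldots\ from the quasi-random behaviour of the greedy walk'' is not an argument. The paper does \emph{not} establish super-regularity of $\cF_D$; instead it proves a one-sided expansion: for \emph{every} $X\subseteq V$ with $|X|=\delta n$, all but at most $O(\delta t)$ edges of $D$ lie in $\bigcup_{v\in X}\cF_v$. This has to survive a union bound over $\binom{n}{\delta n}\approx\exp(\delta\log(1/\delta)n)$ choices of $X$, so the per-$X$ failure probability must be that small. To achieve this, the paper uses Claim~\ref{claim:degree_good3} to show that the conditional probability per two-step that $(H_{s+1},H_{s+2})\in\bigcup_{v\in X}\cF_v$ is at least $1-O(\eps'/\delta)$, and then applies the \emph{relative-entropy} form of Chernoff to obtain a tail of order $\exp(-\Theta(\delta\log(1/\eps')n))$, which beats the union bound precisely because $\eps'\ll\delta$. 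The standard Chernoff bound is not sharp enough here, and a generic ``density lower bound for all linear $X,Y$'' does not fall out of the construction. You should also account for the backtracking of the DFS (the set $A$ in the paper), which can corrupt up to $\eps' n$ steps and must be subtracted at the end.
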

	
	\begin{claimproof}[Proof of Claim~\ref{claim:H_matching2}]
		We denote by $\cD$ the event that we have a directed path $D$ with $t-1$ edges and assume that $\cD$ holds.
		It suffices to show that a.a.s.~for each $X \subseteq V$ of size at most $t-1$ we have $|\bigcup_{v \in X} N_{\cF_D}(v)| \ge |X|$.
		The claim follows then by Hall's condition.
		Let $X \subseteq V$ of size at most $t-1$ be given.
		
		First suppose that $|X| > (1-d^{k+2} 2^{-4k-9})n$ and let $e=(H,H')$ be any edge of $D$.
		Since $HH'$ is in particular an edge of $F^\ast$, the vertices $V(H) \cup V(H')$ have at least $d^{2k+2} 2^{-4k-8}n$ common neighbours in $V$.
		As $|V \setminus V'| \le 2 \delta n \le d^{2k+2} 2^{-4k-9} n$, $e$ has a neighbour in $X$ with respect to $\cF_D$.
		Since this is true for any edge $e$ of $D$, we conclude that $|\bigcup_{v \in X} N_{\cF_D}(v)|=t-1 \ge |X|$.
		
		Secondly, suppose that $|X|<\delta n$.
		Here it suffices to show that for any $v \in V$ we have $|N_{\cF_D}(v)| \ge \delta n$.
		Fix any $v \in V$ and let $\ell=d^{k+1} 2^{-2k-7} n$.
		As the pair $(\cH,V)$ is $(\eps',d^{k+1}2^{-2k-5})$-super-regular with respect to $\cT_G(\cH,V)$, we have that $v$ has degree at least $d^{k+1}2^{-2k-5} m_0$ into $\cH$ with respect to $\cT_G(\cH,V)$.
		Consider any point during the first $\ell$ steps of the algorithm, where $H_1,\dots,H_r$ is the current path and $H_r$ is not (yet) in $A$.
		This last assumption is crucial for the rest of the proof as it implies that the next vertex $H_{r+1}$ is chosen uniformly at random between the neighbours of $H_r$.
		We denote by $\cM$ the history of the algorithm until this point.
		Now we will look into the next two steps of the algorithm and estimate the probability that two vertices $H_{r+1}$ and $H_{r+2}$ are added to the path and the edge $(H_{r+1},H_{r+2})$ is in $\cF_v$.
		
		We write $H_r,H_{r+1} \not\in B$ for the event that both $H_r$ and $H_{r+1}$ do not get added to $B$ after two steps of the algorithm.
		Since $\PP[\cD]=1$ (this follows from Claim~\ref{claim:directed_path}) and
		\[\PP[(H_{r+1},H_{r+2}) \in \cF_v \land H_r, H_{r+1} \not \in B | \cM \land \cD] \ge \PP[(H_{r+1},H_{r+2}) \in \cF_v \land H_r, H_{r+1} \not \in B | \cM ] - o(1) \, , \]
		we do not need to condition on $\cD$ in the following calculation.
		
		We first bound the probability that $H_r$ gets added to $B$ after one step.
		For that we note that, with Claim~\ref{claim:min_degree3}, in each of the first $\ell$ steps we have at least $(1-2\eps')m_0 - \ell \ge \delta n/3$ neighbours of $H_r$ in $F^\star$ that are still available.
		Then $\PP[H_r \in B | \cM] \le (1-\tfrac p2)^{\delta n/3} \le \exp(- \tfrac 16 p \delta n) \le \eps'$.
		Therefore $\PP [H_r,H_{r+1} \not\in B | \cM ] \ge (1-\eps')^2$.
		Next, we want to bound the number of valid choices for $H_{r+1}$ in $\cH'$ that are neighbours of $v$ in $\cT_G(\cH,V)$.
		From the neighbours of $v$ in $\cT_G(\cH,V)$, we have to exclude those $H'$ such that $H_r H'$ is not an edge of $F^*$ and those $H'$ that are currently ``dead-ends": in the first case their number is at most $2\eps' m_0$ by Claim~\ref{claim:min_degree3}, in the second case their number is at most $|B| \le \eps'n$.
		Therefore there are at least $d^{k+1} 2^{-2k-5}m_0 - 2 \eps' m_0 - \eps' n - \ell \ge d^{k+1} 2^{-2k-6}m_0$ valid choices for $H_{r+1}$ in $\cH'$ that are neighbours of $v$ in $\cT_G(\cH,V)$.
		Repeating the same argument in the next step of the algorithm, there are at least $d^{k+1} 2^{-2k-6}m_0$ valid choices for $H_{r+2}$ in $\cH'$ that are neighbours of $v$ in $\cT_G(\cH,V)$.
		In particular for such choices of $H_{r+1}$ and $H_{r+2}$, the edge $(H_{r+1},H_{r+2})$ is in $\cF_v$.
		
		If $H_r$ (resp.~$H_{r+1}$) is not in $B$, then the vertex $H_{r+1}$ (resp.~$H_{r+2}$) is chosen uniformly at random from the at most $|\cH|=m_0$ available possibilities as we revealed the edges of $\bar F$ in each step separately.
		Therefore, together with the bound on $\PP[H_r,H_{r+1} \not\in B | \cM]$, we get
		\[\PP[(H_{r+1},H_{r+2}) \in \cF_v \land H_r, H_{r+1} \not \in B | \cM] \ge (1-\eps')^2 \frac{(d^{k+1} 2^{-2k-6} m_0)^2} {m_0^2} \ge d^{2k+2} 2^{-4k-13} \, .\]
		Crucially, this lower bound holds independently of the history $\cM$. 
		As among the first $\ell$ steps we can have at most $\eps' n$ many steps in which $H_r \in A$ and as the same lower bound holds when we additionally condition on $\cD$, this process dominates a binomial distribution with parameters $\ell-\eps'n$ and $d^{2k+2} 2^{-4k-13} n$.
		Therefore, even though the events are not mutually independent, we can use Chernoff's inequality (Lemma~\ref{lem:chernoff}) to infer that with probability at least $1-n^{-2}$ at least $d^{3k+3} 2^{-5k-21} n$ of these edges are in $\cF_v$.
		Some of these edges might not appear in the final path $D$, because of the ``dead-ends" and the backtracking of the algorithm, but their number is at most $\eps' n$.
		Thus we get that $|N_{\cF_D}(v)| \ge d^{3k+3} 2^{-5k-21} n - \eps'n \ge \delta n$ with probability at least $1-n^{-2}$.
		By applying the union bound over all $v \in V$, we obtain that a.a.s.~$|N_{\cF_D}(v)| \ge \delta n$ for all $v \in V$, as desired.

		Finally, assume that $\delta n \le |X| \le (1- d^{k+2}2^{-4k-9})n$.
		Here it suffices to show that, for any $X \subseteq V$ with $|X|=\delta n$, we have $|\bigcup_{v \in X} N_{\cF_D}(v)| \ge (1- d^{k+2}2^{-4k-9})n$.
		We use a similar argument as above, but this time we need to give more precise estimates.
		Consider any step of the algorithm where the current path is $H_1, \dots, H_r$ for some $r < t-1$, again with $H_r \not\in A$, and denote by $\cM$ the history of the algorithm until this point.
		As before we do not need to worry about conditioning on $\cD$.
		We want to bound the number of valid choices for $H_{r+1}$ in $\cH'$ that are neighbours of some $v \in X$ in $\cT_G(\cH,V)$.
		With Claim~\ref{claim:min_degree3}, there are at least $m_0-2\eps' m_0- \eps'n-r$ choices for $H_{r+1} \in \cH'$ such that $H_r H_{r+1}$ is an edge of $F^*$ and $H_{r+1} \not\in B$ (i.e. $H_{r+1}$ is not currently a ``dead-end").
		Using Claim~\ref{claim:degree_good3}, for at least $m_0-r-4 \eps'n$ of these choices, $H_{r+1}$ has degree at least $(1-4\eps')m_0$ in $F^*_X$.
		Then there are at least $m_0-r-4\eps' m_0-\eps'n \ge m_0 - r - 5 \eps' n$ choices for $H_{r+2}$, such that $(H_{r+1}, H_{r+2})$ is in $\bigcup_{v \in X} \cF_v$.
		
		On the other hand, there are at most $(m_0-r)$ choices for each of $H_{r+1}$ and $H_{r+2}$ and, as above, we have at least $(1-2\eps') m_0 - r - \eps'n \ge \delta n/3$ available neighbours of $H_r$ and $H_{r+1}$ and $\PP [H_r,H_{r+1} \not\in B | \cM ] \ge (1-\eps')^2$.
		Using that $m_0-r \ge \delta n/3$ we get
		\begin{align*} 
			\PP\left[ (H_{r+1},H_{r+2}) \in \bigcup_{v \in X} \cF_v \land H_r, H_{r+1} \not\in B \Big| \cM \right]
			\ge \frac{(1 - \eps')^2 (m_0-r-5 \eps'n)^2}{(m_0-r)^2} \ge 1- 5 \frac{\eps'}{\delta}\, .
		\end{align*}
		Again, as the lower bound holds independently of the history $\cM$ and as there are at most $\eps'n$ steps with $H_r \in A$ when we additionally condition on $\cD$, this process dominates a binomial distribution with parameters $t-1-\eps'n$ and $1-5 \tfrac{\eps'}{\delta}$.
		Therefore, the number $Y$ of these edges that are in $\bigcup_{v \in X} \cF_v$ is in expectation at least $(t-1-\eps'n) (1-5\tfrac{\eps'}{\delta}) \ge (1-\delta) t$
		and, using the more precise version of Chernoff's inequality (Lemma~\ref{lem:chernoff}) where $\delta-5\tfrac{\eps'}{\delta}$ plays the role of $\delta$, we get that
		\begin{align*}
			\PP\left[ Y  < ( 1- 2\delta) t \right] \le \PP\left[Y \le \EE[Y] - \left(\delta-5\frac{\eps'}{\delta}\right) (t-1 - \eps'n) \right] \\
			\le \exp \left( -D \left( (1-\delta) \big| \big| 1-5\frac{\eps'}{\delta} \right) (t-1 - \eps'n) \right) \\
			\le \exp \left( -\delta \left(\log \left( \frac{\delta^2}{5\eps'} \right) -2 \right)  (t-1-\eps'n)\right) \\
			\le \exp\left( -\delta \log\left(\frac{1}{\eps'}\right) \frac 12 t \right)\,.
		\end{align*}
		There are at most $\binom{n}{\delta n} \le (\tfrac{e}{\delta} )^{\delta n} \le  \exp (\delta \log(\tfrac{1}{\delta}) n) \le \exp(\delta \log(\tfrac{1}{\eps'}) \tfrac 14 t)$
		choices for $X$ and, thus, with the union bound over all these choices, we obtain that a.a.s.~for every $X \subseteq V$ with $|X|=\delta n$ at least $(1-2\delta) t$ of the edges are in $\bigcup_{v \in X} \cF_v$.
		At most $\eps' n$ of these edges ultimately do not belong to $D$ and putting this together we a.a.s.~have
		\begin{align*}
			\big|\bigcup_{x \in X} N_{\cF_D}(x)\big|  \ge (1- 2 \delta ) t -\eps'n \ge (1-4\delta) n \ge (1- d^{k+2}2^{-4k-9})n
		\end{align*}
		for any $X \subseteq V$ with $|X|=\delta n$, as wanted.
	\end{claimproof}

	Let $D$ be the directed path in $F^\ast$ given by Claim~\ref{claim:directed_path} and assume that the assertion of Claim~\ref{claim:H_matching2} also holds.
	We denote the first vertex of $D$ by $H_x'$ and the last by $H_y'$.
	Before dealing with the next step, we summarise what we have so far. 
	We have several copies of $H^{(k)}$: $H_x$, $H_y$ and those in $\cH$.
	The vertices $x$ and $x'$ (resp.~$y$ and $y'$) are adjacent in $G$ to all vertices of $H_x$ (resp.~$H_y$), and thus $(x,x')$ and $(y,y')$ can be end-tuples of the square of a Hamilton path we want to construct.
	Moreover, we have an ordering (given by the directed path $D$) of $t$ copies of $H^{(k)}$ in $\cH$, such that if $H=u_1,\dots,u_k$ and $H=u_1',\dots,u_k'$ appear consecutively, then $u_k u_1'$ is an edge of the random graph and all their vertices $u_1,\dots,u_k,u_1',\dots,u_k'$ have many common neighbours in $V$ in the graph $G$.
	
	\textbf{Covering the left-over vertices from $U_1 \cup \dots \cup U_k$.}
	Let $\cH'$ be the copies of $H^{(k)}$ in $\cH$ not used for the path $D$ and observe that $|\cH'|=|\cH \setminus V(D)|=m_0-t = s-2$, where the last equality follows from \eqref{eq:m_0}.
	Further observe that the number of vertices in $U_i$ not in any copy of $H^{(k)}$ in $\cH$ is $|U_i|-2-|\cH|= (1-\delta)n-m_0= \tfrac{s}{k}$, where the last equality follows from \eqref{eq:s/k}.
	Therefore we have exactly $s$ vertices in total in $U_1 \cup \dots \cup U_k$ to absorb; let $Z$ be the set of these vertices.
	We want to cover the $s$ vertices in $Z$ with the square of two paths connecting $H_x$ to $H_x'$ and $H_y'$ to $H_y$ respectively, while using all copies of $H^{(k)}$ in $\cH'$ and exactly $4s$ vertices from $V$(c.f.~Figure~\ref{fig_HC2_k3_2}).
	
	We start from connecting $H_x$ to $H_x'$, while absorbing two vertices of $Z$.
	We pick $H' \in \cH'$ and $z_x,z_x' \in Z$ such that the vertices in $H_x \cup \{z_x\} \cup H'$ and $H' \cup \{z_x' \} \cup H_x'$ each have at least $2\delta n$ common neighbours in $V$.
	This is possible by using Claim~\ref{claim:min_degree3} and the regularity property of $G$.
	Then, as we assumed that Lemma~\ref{lem:embedding} holds in $G_3$, there is a path on four vertices within each of these two sets of $2\delta n$ vertices, that gives the desired connection (c.f.~Figure~\ref{fig_HC2_k3_2}).
	
	Now we connect $H_y'$ to $H_y$, while absorbing the other $s-2$ vertices of $Z \setminus \{z_x,z_x'\}$.
	Let $H_1'=H_y$, $H_{s-1}'=H_y'$ and $H_2',\dots,H_{s-2}'$ be a labelling of the remaining $s-3$ copies of $H^{(k)}$ in $\cH' \setminus \{ H' \}$ such that for $j=1,\dots,s-2$ we have that all vertices in $V(H_j) \cup V(H_{j+1})$ have at least $d^{2k+2} 2^{-4k-8} n$ common neighbours in $V$ in $G$.
	This is possible by Dirac's Theorem and because, by Claim~\ref{claim:min_degree3}, for each $H \in \cH' \cup \{ H_y,H_y' \}$ all but $6 \tfrac{\eps'}{\delta}s$ choices $H' \in \cH'$ are such that the vertices $V(H) \cup V(H')$ have $d^{2k+2} 2^{-4k-8} n$ common neighbours.
	
	Next, we want to find a labelling $z_1,\dots,z_{s-2}$ of the vertices from $Z'=Z \setminus \{z_x,z_x' \}$ such that for $j=1,\dots,s-2$ the vertices $V(H_j) \cup \{z_j\} \cup V(H_{j+1})$ have at least $2\delta n$ common neighbours in $V$.
	This again follows easily from Hall's condition for perfect matchings and because, by Claim~\ref{claim:min_degree3}, for each $j=1,\dots,s-2$ all but $6 \eps's/\delta$ choices $z \in Z$ are such that $V(H_j) \cup V(H_{j+1}) \cup \{ z \}$ have $2\delta n$ common neighbours and, similarly, vice versa.
	Then by Lemma~\ref{lem:embedding}, a.a.s.~we can greedily choose a path on four vertices in the common neighbourhood of the vertices from $V(H_j) \cup V(H_{j+1}) \cup \{ z_j \}$ in $V$ for $j=1,\dots,s-2$, with all the edges coming from the random graph $G_3$.
	This again gives the desired connection (c.f.~Figure~\ref{fig_HC2_k3_2}).
	
	This completes the square of two paths from $H_x$ to $H_x'$ and from $H_y'$ to $H_y$.
	These two cover exactly $4s$ vertices of $V$.
	Therefore, there are precisely $|V|-4-4s=n-4s=t-1$ vertices of $V \setminus \{x,x',y,y'\}$ not yet covered by the square of a path, where we used \eqref{eq:t-1}; we let $V'$ be the set of such vertices.
	Observe that $|V'|=t-1=|E(D)|$.	
	
	\textbf{Finishing the square of a path.}
	We finish the proof by constructing the square of a path with $H_x'$ and $H_y'$ at the ends using precisely the vertices of $V'$ and the copies of $H^{(k)}$ that are vertices of $V(D)$.
	For this we use that by Claim~\ref{claim:H_matching2} there is a perfect matching in $\cF_D[V',E(D)]$.
	For $i=1,\dots,t-1$, let $v_i$ be the vertex of $V$ matched to the edge $(H_i,H_{i+1}) \in E(D)$ in $\cF_D$.
	With $H_i=u_1,\dots,u_k$ and $H_{i+1}=u_1',\dots,u_k'$, we then have that $v_i$ is incident to $u_{k-1}u_{k},u_1',u_2'$ by definition of $\cF_D$.
	This completes the construction of the square of the path with $H_x'$ to $H_y'$ at the ends.
	By adding the two connections found above from $H_x$ to $H_x'$ and from $H_y'$ to $H_y$ and the initial tuples $(x,x')$ and $(y,y')$, we get the square of a Hamilton path with end-tuples $(x,x')$ and $(y,y')$ as desired (c.f.~Figure~\ref{fig_HC2_k3}).
	This finishes the proof of the lemma.
\end{proof}

We end this section by giving the proof of Lemma~\ref{lem:bipartite}, that follows from Lemma~\ref{lem:multipartite2}, once we split appropriately the super-regular regular pair $(U,V)$ into two copies of super-regular $K_{1,2}$, both suitable for an application of Lemma~\ref{lem:multipartite2} with $k=2$.

\begin{proof}[Proof of Lemma~\ref{lem:bipartite}]
	Let $0<d<1$, choose $\delta'$ with $0 < \delta' \le \tfrac18 d$ and apply Lemma~\ref{lem:multipartite2} with $k=2$, $\delta'$ and $\tfrac 18 d$ to obtain $\delta_0,\delta,\eps'$ with $\delta' \ge \delta_0 > 2\delta>\eps'>0$ and $C'>0$.
	Then let $0 < \eps \le \tfrac18 \eps'$, $C \ge 4C'$ and $p \ge C n^{-1}$.
	Next let $U$ and $V$ be vertex-sets of size $|V|=n$ and $3n/4 \le |U|=m \le n$ and assume that $(U,V)$ is an $(\eps,d)$-super-regular pair.
	Let $(x,x')$ and $(y,y')$ be tuples from $V$ and $U$, respectively, such that they have $\tfrac12 d^2n$ common neighbours into the other set.
	We will reveal $G(V,p)$ and $G(U,p)$ both in two rounds as $G_1, G_3 \sim G(V,\tfrac 12 p)$,  and $G_2, G_4 \sim G(U,\tfrac 12 p)$.
	
	We partition $V$ into $V_1$, $U_2$, $W_2$ and $U$ into $V_2$, $U_1$, $W_1$ such that for $i=1,2$ the pairs $(U_i,V_i)$ and $(W_i,V_i)$ are $(\eps',\tfrac 18 d)$-super-regular pairs and $(1-\delta_0)|V_i| \le |U_i|=|W_i| \le (1-\delta) |V_i|$.
	Additionally, we require that $(x,x')$ is in $V_1$ and that $(y,y')$ is in $V_2$ and that they have at least $\tfrac12 (\tfrac d8)^2 n$ common neighbours in $U_1$, $W_1$ and in $U_2$, $W_2$, respectively.
	To obtain this we split the sets according to the following random distribution.
	We put any vertex of $V$ into each of $U_2$ and $W_2$ with probability $q_1$ and into $V_1$ with probability $1-2q_1$.
	Similarly, we put any vertex of $U$ into each of $U_1$ and $W_1$ with probability $q_2$ and into $V_2$ with probability $1-2q_2$.
	We choose $q_1$ and $q_2$ such that the expected sizes satisfy for $i=1,2$
	\[ \EE[|U_i|]=\EE[|W_i|] = \left( 1-\frac{\delta_0+\delta}{2} \right) \EE[|V_i|]. \]
	This is possible since such conditions give a linear system of two equations in two unknowns $q_1$ and $q_2$ and, as $3n/4 \le m \le n$, the solution satisfies $1/7 \le q_1,q_2 \le 3/7$.
	Then by Chernoff's inequality (Lemma~\ref{lem:chernoff}) and with $n$ large enough there exists a partition such that for $i=1,2$ we have that $|W_i|$, $|U_i|$ and $|V_i|$ are all within $\pm n^{2/3}$ of their expectation 
	and the minimum degree within both pairs $(U_i,V_i)$ and $(W_i,V_i)$ is at least a $d/4$-fraction of the other set.
	For $i=1,2$ we redistribute $o(n)$ vertices between $U_i$ and $W_i$ and move at most one vertex from or to $V_i$ to obtain
	\[ (1-\delta_0)|V_i| \le |U_i| = |W_i| \le (1-\delta)|V_i| \]
	with minimum degree within both pairs $(U_i,V_i)$ and $(W_i,V_i)$ at least a $d/8$-fraction of the other set.
	Moreover, for $i=1,2$, we can ensure that with $n_i=|V_i|-4$ we have $n_i-|U_i| \equiv -1 \pmod{5}$.
	
	From this we get that for $i=1,2$ the pairs $(U_i,V_i)$ and $(W_i,V_i)$ are $(\eps',\tfrac18 d)$-super-regular.
	With $G_1$ and $G_2$ we reveal random edges within $V_1$ and $V_2$ with probability $p/2$ to find tuples $(z,z')$ in $V_1$ and $(w,w')$ in $V_2$ such that together they give a copy of $K_4$ and $(z,z')$ and  $(w,w')$ have at least $\tfrac12 (\tfrac d8)^2 n$ common neighbours in $U_1$, $W_1$ and in $U_2$, $W_2$, respectively.
	Then we use Lemma~\ref{lem:multipartite2} and $G_3$, $G_4$ with $C n^{-1} \ge C' \min\{ |V_1|, |V_2|\}^{-1}$ to a.a.s.~find the square of a Hamilton path on $V_i$, $U_i$, $W_i$ for $i=1,2$ with end-tuples $(x,x')$, $(z,z')$ and $(y,y')$, $(w,w')$, respectively.
	Together with the edges between $(z,z')$ and $(w,w')$ this gives the square of a Hamilton path covering $U$ and $V$ with end-tuples $(x,x')$ and $(y,y')$.
\end{proof}

\section{Concluding remarks}
\label{sec:concluding}

We determined the exact perturbed threshold for the containment of the square of a Hamilton cycle in randomly perturbed graphs. 
As already pointed out in Section~\ref{sec:relatedWork}, much less is known for larger powers of Hamilton cycles and it would be interesting to investigate them further.
In this section we discuss the perturbed threshold $\hp$ for the containment of the third power of a Hamilton cycle.

Recall from Section~\ref{sec:relatedWork} that $\hp$ is already known for $\alpha = 0$ and $1/2 < \alpha \le 1$: we have $\hp=n^{-1/3}$ for $\alpha=0$, $\hp=n^{-1}$ for $1/2 < \alpha < 3/4$, and $\hp=0$ for $\alpha \ge 3/4$.
For $0<\alpha<1/2$, it is only known that there exists $\eta>0$ such that $\hp \le n^{-1/3-\eta}$.

We observe that we can obtain natural lower bounds by determining the sparsest possible structure that remains for $G(n,p)$ after mapping the third power of a Hamilton cycle into the complete bipartite graph $H_\alpha$ with parts of size $\alpha n$ and $(1-\alpha)n$.
When $\alpha= 1/4$, this structure is essentially the square of a Hamilton cycle on $3n/4$ and is obtained by mapping every fourth vertex of the third power of a Hamilton cycle into the smaller part of $H_{1/4}$. 
Therefore, in order for $H_{1/4} \cup G(n,p)$ to contain the third power of a Hamilton cycle, we need $G(n,p)$ to contain the square of a Hamilton cycle on $3n/4$ vertices.
This gives $\hat{p}_{1/4}(n) \ge n^{-1/2}$ and we believe this is actually tight.
\begin{conjecture}
	\label{conj:third}
	The perturbed threshold $\hat{p}_{\alpha}(n)$ for the containment of the third power of a Hamilton cycle satisfies $\hat{p}_{1/4}(n) = n^{-1/2}$.
\end{conjecture}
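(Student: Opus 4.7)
The plan is to adapt the stability framework of Theorem~\ref{thm:main_small} to the third power of a Hamilton cycle at the critical value $\alpha = 1/4$.

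For the lower bound $\hat{p}_{1/4}(n) \ge cn^{-1/2}$, I would take $G_{1/4} = H_{1/4}$ with parts $A,B$ of sizes $n/4$ and $3n/4$, and suppose a copy $\phi$ of $C_n^3$ lives in $H_{1/4} \cup G(n, cn^{-1/2})$. The independence number of $C_n^3$ equals $n/4$, and every independent set of this size is of the form $\{v_i : i \equiv a \pmod{4}\}$ for some $a$. Since $G(n, cn^{-1/2})[A]$ a.a.s.\ has $o(n)$ edges for $c$ small, the preimage $\phi^{-1}(A)$ must be a near-independent set of size $n/4$ in $C_n^3$, and a short stability argument for near-maximum independent sets in $C_n^3$ forces $\phi^{-1}(A)$ to agree with a residue class modulo $4$ up to an $o(n)$ error. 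A direct cyclic-distance computation shows that $C_n^3$ induced on the complementary residue classes is isomorphic to $C_{3n/4}^2$; after a short re-routing absorbing the $o(n)$ error the corresponding copy of $C_{3n/4}^2$ must live in $G(n,cn^{-1/2})[B] \sim G(3n/4, cn^{-1/2})$, which fails a.a.s.\ for $c$ small by the lower-bound direction of Kahn--Narayanan--Park~\cite{square_random}.

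For the upper bound I would introduce a $(1/4,\beta)$-stability notion in direct analogy with Definition~\ref{def:stability} and split into an extremal and a non-extremal case. In the extremal case I would first balance the partition and cover low-degree vertices using families of short cube-of-path connectors, as in steps 1--2 of the proof of Theorem~\ref{thm:small_extremal}. This reduces the problem to finding in $(G \cup G(n,p))[B]$ a copy of $C_{|B|}^2$ together with its natural triangle decomposition, and then inserting each remaining vertex of $A$ into the slot between two consecutive triangles so that it is adjacent in $G$ to all six vertices of the two neighbouring triangles. Kahn--Narayanan--Park applied to $G(n,p)[B]$ delivers such a $C_{|B|}^2$ at $p \ge Cn^{-1/2}$ (after a trivial divisibility adjustment ensuring $3\mid |B|$), and the insertion of $A$-vertices proceeds by a Hall-matching argument, since by stability abundantly many vertices of $A$ see almost all of $B$.

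In the non-extremal case I would run the regularity scheme of Section~\ref{sec:non-extremal}: apply Lemma~\ref{lem:reg}, prove a $1/4$-analog of Lemma~\ref{lem:stable_cluster} showing that the failure of stability forces a cover of the reduced graph by copies of $K_4$ (together with a bounded number of $K_{1,3}$'s for parity adjustments), connect the pieces using short random cube-of-path connectors, absorb the exceptional set, and fill each $K_4$ of the cover by a cube of a Hamilton path on its four clusters via a cube analog of Lemma~\ref{lem:multipartite2}. No $\log$ factor arises here (unlike at $\alpha=1/(k+1)$ for the square) because Kahn--Narayanan--Park has a $\log$-free sharp threshold and because the relevant $1$-densities inside each $K_4$ piece stay at or below $2$, so $p \ge C n^{-1/2}$ suffices without invoking Johansson--Kahn--Vu.

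The main technical obstacle is this cube analog of Lemma~\ref{lem:multipartite2}. The proof of Lemma~\ref{lem:multipartite2} builds the spanning square of a path from transversal copies of $P_k^2$ produced by Lemma~\ref{lem:aux_reg}; these are affordable at $p \ge C n^{-1/2}$ because $m_1(P_k^2) = (2k-3)/(k-1) < 2$. The naive cube substitute $P_k^3$ satisfies $m_1(P_k^3) = (3k-6)/(k-1) \to 3$, which is too large. The fix is to work transversally with $K_4 = P_4^3$, for which $m_1(K_4) = 2$ keeps us at $p \ge C n^{-1/2}$, and to build the cube of a Hamilton path on $V, U_1, U_2, U_3$ by chaining such transversal $K_4$'s with short random connectors realising the six cross-bridge edges per link in the natural ``chain of $K_4$'s'' decomposition of $C_n^3$ outlined above. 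Setting up the correct auxiliary bipartite graph $\cT_G(\cH, V)$ for $H = K_4$, running the greedy directed-path procedure on it as in the proof of Lemma~\ref{lem:multipartite2}, and verifying the corresponding minimum-degree and coverage claims in this setting, is where the bulk of the work lies.
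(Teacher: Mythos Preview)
The statement you are attempting to prove is labelled a \emph{Conjecture} in the paper, and the paper does not contain a proof of it. Immediately after stating the conjecture, the authors write: ``finding the square of a Hamilton cycle at this probability is a particularly challenging problem, and, additionally, it is not possible to first embed small parts arbitrarily and then connect them, as we do in the proof of our main result.'' So there is no paper proof to compare against; the relevant question is whether your outline overcomes the obstacle the authors identify. It does not.

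Your lower-bound argument contains a concrete error: at $p = cn^{-1/2}$ the graph $G(n,p)[A]$ has in expectation $\binom{n/4}{2}\,cn^{-1/2} = \Theta(cn^{3/2})$ edges, not $o(n)$. Hence you cannot conclude that $\phi^{-1}(A)$ is near-independent in $C_n^3$, and the stability reduction to a copy of $C_{3n/4}^2$ in $G(n,p)[B]$ collapses. The paper's remark ``This gives $\hat p_{1/4}(n)\ge n^{-1/2}$'' is a heuristic, not a proof, and making it rigorous already requires a different argument.

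On the upper bound, your non-extremal scheme has two gaps. First, there is no analogue of Lemma~\ref{lem:stable_cluster} that yields a $K_4$-cover of the reduced graph from minimum degree $\approx t/4$; that lemma only produces a large matching, and passing to stars $K_{1,k}$ as in Section~\ref{sec:non-extremal} exploits the bipartite structure, not cliques. Second, and more seriously, your ``cube analogue of Lemma~\ref{lem:multipartite2}'' cannot be built from transversal $K_4$'s at $p = Cn^{-1/2}$. If the six cross-bridge edges between consecutive transversal $K_4$'s are random, the relevant unit is $P_8^3$ with $m_1(P_8^3)=18/7>2$, too dense. If instead you route the three $V$--$U_i$ cross-edges through $G$ and leave only the $U_i$--$U_j$ cross-edges random, then the random structure on $U_1\cup U_2\cup U_3$ is precisely a copy of $C_{3m}^2$, and you are back to needing a spanning square of a cycle in a random graph at its threshold, with additional super-regularity constraints against $V$. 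This is exactly the obstacle the paper names: the find-pieces-and-connect strategy that powers Lemma~\ref{lem:multipartite2} is unavailable here, because every finite piece $P_k^2$ has $m_1<2$ but the connection step pushes the density back to $2$. Your proposal acknowledges ``this is where the bulk of the work lies'' but offers no mechanism for doing it.
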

However, as discussed in the introduction, finding the square of a Hamilton cycle at this probability is a particularly challenging problem and, additionally, it is not possible to first embed small parts arbitrarily and then connect them, as we do in the proof of our main result.

For each of the ranges $0 < \alpha < 1/4$ and $1/4 < \alpha < 1/2$, it is not clear whether to expect a similar `jumping' behaviour as the one proved for the square of a Hamilton cycle in the range $0<\alpha< 1/2$.
We can obtain natural lower bounds similarly as we did for $\alpha=1/4$.
Again, the sparsest structure that remains for $G(n,p)$ is obtained essentially by mapping every $1/\alpha$-th vertex of $C_n^3$ into the smaller part of $H_\alpha$.
However, in contrast to Section~\ref{subsec:lower_bounds}, the threshold for the appearance of this structure in $G(n,p)$ is not determined by the second or third power of a short path.
For example, when $\alpha = 1/5$, by doing as described above and mapping every fifth vertex of $C_n^3$ into the smaller part of $H_\alpha$, we are left with copies of $P_4^3$, which are connected by three edges, cyclically.
By a first moment argument, the threshold for this structure in $G(n,p)$ is at least $n^{-4/9}$ and thus $\hat{p}_{1/5}(n) \ge n^{-4/9}$, which is larger than the threshold for a $P_4^3$-factor in $G(n,p)$.
More generally, we get for $0 \le \alpha \le 1/4$ that $\hp \ge n^{-(1-\alpha)/(3-6\alpha)}$, and for $1/4 \le \alpha \le 1/2$ that $\hp \ge n^{-(1-\alpha)/(5/2-4\alpha)}$.
Note that these lower bounds match the already known value of $\hat{p}_\alpha(n)$ for $\alpha=0$ and coincide for $\alpha=1/4$, in support of Conjecture~\ref{conj:third}. 
Moreover, if it is tight, this `continuous' perturbed threshold would be an exciting new behaviour.

\begin{question}
	Does the perturbed threshold $\hat{p}_{\alpha}(n)$ for the containment of the third power of a Hamilton cycle satisfy
	\[
	\hat{p}_\alpha(n) =  
	\begin{cases}
		n^{-(1-\alpha)/(3-6\alpha)} & \text{if } \alpha\in [0,\tfrac 14)\,, \\
		n^{-(1-\alpha)/(5/2-4\alpha)} & \text{if } \alpha \in [\tfrac 14, \tfrac 12]\,?
	\end{cases}
	\]	
\end{question}

For neither of the two ranges of $\alpha$ this lower bound seems to be attainable with our approach, because at this probability there is no small structure that we can find and then connect into the third power of a Hamilton cycle.
Taking again the example of $\alpha= 1/5$, our lower bound for the perturbed threshold is at $n^{-4/9}$, but at this probability it is not possible to first find the copies of $P_4^3$ arbitrarily and then connect them.
However we believe our methods can give the following.
We map every ninth and tenth vertex of $C_n^3$ into the smaller part of $H_{1/5}$, which leaves copies of $P_{8}^3$ connected by single edges, cyclically.
We expect that it is possible to extend our argument to this set-up, but this would only imply $\hat{p}_{1/5}(n) \le n^{-7/18}$.
More generally, we believe we can show that $\hat{p}_\alpha(n) \le n^{-(2k-1)/(6k-6)}$ for $\alpha \in \left( \tfrac{1}{k+1}, \tfrac{1}{k} \right)$ and $k \ge 4$, which is the threshold for the containment of linearly many copies of $P^3_{2k}$ in $G(n,p)$.
Similarly, we believe we can show that $\hat{p}_\alpha(n) \le n^{-(3k-1)/(6k-3)}$ for $\alpha \in \left( \tfrac{k+1}{4k+1}, \tfrac{k}{4k-3} \right)$ and $k \ge 1$, which is the threshold for the containment of linearly many copies of $P^2_{3k}$ in $G(n,p)$.
This would improve on the bounds obtained in~\cite{bottcher2017embedding}, but it is still far from the lower bounds discussed above.
Note that the latter bound tends to $n^{-1/2}$, as $k$ tends to infinity (and thus $\alpha$ tends to $\tfrac 14$), supporting again Conjecture~\ref{conj:third}.

\section*{Acknowledgements}
We are grateful to an anonymous referee for their insightful and valuable comments.

\bibliographystyle{amsplain}
\bibliography{references}

\appendix

\section{Supplementary proofs}
\label{sec:appendix}

In this section we will prove Lemmas~\ref{lem:paths2},~\ref{lem:sublinear_square_paths},~\ref{lem:auxF} and~\ref{lem:H_copies}.
The proof of Lemma~\ref{lem:sublinear_square_paths} generalises the argument in~\cite[Theorem~2.4]{triangle_paper}, and the proof of Lemma~\ref{lem:auxF}  only requires basic regularity type arguments.
The remaining two lemmas concern random graphs and their proofs are based on Janson's inequality (see e.g.~\cite[Theorem 2.18]{JLR}).
\begin{lemma}[Janson's inequality]
	\label{lem:janson}
	Let $p \in (0,1)$ and consider a family $\{ H_i \}_{i \in \cI}$ of subgraphs of the complete graph on the vertex set $[n]=\{1,\ldots,n\}$. For each $i \in \cI$, let $X_i$ denote the indicator random variable for the event that $H_i \subseteq \Gnp$ and, write $H_i \sim H_j$ for each ordered pair $(i,j) \in \cI \times \cI$ with $i \neq j$ if $E(H_i) \cap E(H_j) \not= \emptyset$.
	Then, for $X = \sum_{i \in \cI} X_i$, $\mathbb{E}[X] = \sum_{i \in \cI} p^{e(H_i)}$,
	\begin{align*}
		\Delta [X] = \sum_{H_i \sim H_j} \mathbb{E}[X_i X_j] = \sum_{H_i \sim H_j} p^{e(H_i) + e(H_j) - e(H_i \cap H_j)}
	\end{align*}
	and any $0 < \gamma < 1$ we have
	\begin{align*}
		\mathbb{P} [X \le (1-\gamma) \mathbb{E}[X]] \le \exp \left(-\frac{\gamma^2 \mathbb{E}[X]^2}{2(\mathbb{E}[X] + \Delta[X])} \right).
	\end{align*}
\end{lemma}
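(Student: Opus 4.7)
I would follow the classical Janson strategy, combining an exponential-moment reduction with an FKG-type correlation inequality. First I would reduce the lower-tail bound to a moment-generating-function estimate by applying Markov's inequality to $e^{-tX}$ for an arbitrary $t>0$, which gives
\[ \mathbb{P}\bigl[X \le (1-\gamma)\mathbb{E}[X]\bigr] \le e^{t(1-\gamma)\mathbb{E}[X]}\,\mathbb{E}\bigl[e^{-tX}\bigr]. \]
The problem therefore reduces to upper bounding $\mathbb{E}[e^{-tX}]$ and then optimising the choice of $t$.

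The core step is to prove an exponential-moment inequality of the form
\[ \log \mathbb{E}\bigl[e^{-tX}\bigr] \le -\varphi(t)\,\mathbb{E}[X] + \tfrac{1}{2}\,\varphi(t)^2\,\Delta[X] \]
for a suitable $\varphi(t) \le t$ (for instance $\varphi(t) = 1-e^{-t}$). Since $X = \sum_i X_i$ and each $X_i$ is an increasing function of the independent Bernoulli edge-indicators of $G(n,p)$, the functions $e^{-tX_i}$ are decreasing on the product measure space. I would differentiate $s \mapsto \log \mathbb{E}[e^{-sX}]$ on $[0,t]$, writing its derivative as $-\sum_i \mathbb{E}[X_i e^{-sX}]/\mathbb{E}[e^{-sX}]$, and then use Harris' inequality (the FKG inequality for a product Bernoulli measure) to relate $\mathbb{E}[X_i e^{-sX}]$ to $\mathbb{E}[X_i]\,\mathbb{E}[e^{-sX}]$ up to an overlap correction that is controlled by the pair-sum $\sum_{j\sim i}\mathbb{E}[X_iX_j]$. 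Integrating from $s=0$ to $s=t$ and exponentiating yields the desired bound on $\mathbb{E}[e^{-tX}]$.

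Finally, I would optimise over $t$ by choosing $\varphi(t) = \gamma\mathbb{E}[X]/(\mathbb{E}[X] + \Delta[X])$, which is admissible since this quantity lies in $(0,1)$. Substituting into the combined bound makes the resulting exponent equal to
\[ -\frac{\gamma^2\,\mathbb{E}[X]^2}{2(\mathbb{E}[X]+\Delta[X])}, \]
matching the claimed inequality. The main obstacle is the middle step: handling $\mathbb{E}[e^{-tX}]$ in the presence of the positive correlations among the $X_i$. The FKG/Harris inequality — applicable precisely because the underlying probability space is a product of independent Bernoulli edge variables and each $X_i$ is monotone — is what tames the cross-terms, while the quantity $\Delta[X]$ captures exactly the error introduced by pairs $H_i, H_j$ that share edges in $K_n$ and therefore fail to be independent.
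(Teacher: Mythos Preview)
The paper does not prove this lemma at all: it is quoted as a standard tool with the reference ``see e.g.~[JLR, Theorem~2.18]'' and is used as a black box in the appendix. So there is no in-paper argument to compare against; your plan goes well beyond what the authors do.

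That said, your outline is essentially the classical Janson argument (as in JLR), and the main ideas are right: exponential Markov, differentiate $s\mapsto\log\mathbb{E}[e^{-sX}]$, split $X=X_i+Y_i+Z_i$ with $Z_i$ independent of $X_i$, and use Harris/FKG on the product Bernoulli space to lower-bound $\mathbb{E}[X_ie^{-sX}]$ by $e^{-s}\bigl(\mathbb{E}[X_i]-(1-e^{-s})\sum_{j\sim i}\mathbb{E}[X_iX_j]\bigr)\mathbb{E}[e^{-sX}]$. Integrating gives
\[
\log\mathbb{E}\bigl[e^{-tX}\bigr]\le -\varphi(t)\,\mathbb{E}[X]+\tfrac12\,\varphi(t)^2\,\Delta[X],\qquad \varphi(t)=1-e^{-t},
\]
exactly as you state. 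The one place your write-up is imprecise is the optimisation: plugging $\varphi(t)=\gamma\mathbb{E}[X]/(\mathbb{E}[X]+\Delta[X])$ into the combined exponent $t(1-\gamma)\mathbb{E}[X]-\varphi\mathbb{E}[X]+\tfrac12\varphi^2\Delta[X]$ does \emph{not} directly produce $-\gamma^2\mathbb{E}[X]^2/\bigl(2(\mathbb{E}[X]+\Delta[X])\bigr)$, because the leading term still carries $t$ rather than $\varphi$. The standard fix is to first weaken the mgf bound using $t-\varphi(t)\le t^2/2$ and $\varphi(t)\le t$, obtaining
\[
\log\mathbb{E}\bigl[e^{-tX}\bigr]\le -t\,\mathbb{E}[X]+\tfrac12\,t^2\bigl(\mathbb{E}[X]+\Delta[X]\bigr),
\]
after which the genuinely quadratic exponent is minimised at $t=\gamma\mathbb{E}[X]/(\mathbb{E}[X]+\Delta[X])$ and yields the stated bound. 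With that small correction your plan is a complete and correct proof.
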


\begin{proof}[Proof of Lemma~\ref{lem:paths2}]
	Let $s \ge 1$ and $k \ge 2$ be integers and $0 < \eta \le 1$.
	Moreover let $C\ge 2^{6} (sk)^{2sk} \eta^{-2} $ and $p \ge C n^{-(k-1)/(2k-3)}$.
	
	Let $V$ be a vertex set of size $n$, $V_i$ be a subset of $V$ for $i=1,\dots,s$ and $H$ be a collection of pairwise distinct tuples from $\prod_{i=1}^s V_i^k$. 
	Let $W_i \subseteq V_i$ for each $i=1,\dots,s$ and assume $H'=H \cap \prod_{i=1}^s W_i^k$ has size at least $\eta n^{sk}$.
	Since the number of tuples from $\prod_{i=1}^s V_i^k$ which contain a vertex more than once is $O \left( n^{sk-1} \right)$, there are at least $\frac{\eta}{2} n^{sk}$ tuples of $H'$ such that their vertices are pairwise distinct.
	We restrict our analysis to the set of those tuples, which, abusing notation, we still denote by $H'$.
	
	For each tuple $(v_{i,j}:1 \le i \le s, 1 \le j \le k)$ in $H'$, we consider the graph with vertex set $V$ and the following edges.
	For $i=1,\dots,s$ we have the square of the path on $v_{i,1},\dots,v_{i,k}$ and for $i=1,\dots,s-1$ we have the edge $v_{i,k}v_{i+1,1}$.
	This gives a family $\{ H_i \}_{i \in \left[ |H'| \right]}$ of graphs with vertex set $V$ and, using the same notation as in Lemma~\ref{lem:janson}, a collection of random variables $\{ X_i \}_{i \in \left[ |H'| \right]}$.
	Note that for each $i=1,\dots,s$, we have $e(H_i)=s(2k-3)+(s-1)=2s(k-1)-1$ and thus, for $X = \sum_{ i \in \left[ |H'| \right]} X_i$, we have $\EE[X]=|H'|p^{2s(k-1)-1} \ge \sqrt{C} n$.
	To compute the quantity $\Delta[X]= \sum_{H_i \sim H_j} p^{e(H_i) + e(H_j) - e(H_i \cap H_j)}$, we split the sum according to the number of vertices in the intersection $E(H_i \cap H_j)$.
	Suppose $H_i$ and $H_j$ intersect in $m$ vertices.
	Then $2 \le m \le sk-1$ and the largest size $\tilde{e}(m)$ of the intersection $E(H_i \cap H_j)$ is
	\[
	\tilde{e}(m) = 
	\begin{cases}
		\frac{m}{k} (2k-3) + \frac{m}{k} -1, & \text{if } m \equiv 0 \pmod{k} \\
		\lfloor \frac{m}{k} \rfloor (2k-3) + \lfloor \frac{m}{k} \rfloor, & \text{if } m \equiv 1 \pmod{k} \\
		\lfloor \frac{m}{k} \rfloor (2k-3) + \lfloor \frac{m}{k} \rfloor + 2 \left (m-k\lfloor \frac{m}{k} \rfloor \right) -3, & \text{otherwise.}
	\end{cases}
	\]
	In particular, observing that $\tilde{e}(m)=2m-3$ if $m < k$ (as $m \ge 2$ we are in the third case) and $\tilde{e}(m) \le 2m-2 \frac{m}{k} -1$ if $m \ge k$ (the inequality follows from $\lfloor \frac{m}{k} \rfloor \ge \frac{m}{k} -1$), we can conclude that $p^{-\tilde{e}(m)} n^{-m}  \le C^{-1} n^{-1}$ for each $2 \le m \le sk-1$.
	Therefore,
	\begin{align*}
		\Delta[X] & \le \sum_{m=2}^{sk-1} m! \binom{sk}{m}^2 n^{2sk-m}  p^{[2s(k-1)-1]+[2s(k-1)-1]-\tilde{e}(m)} \le  \sum_{m=2}^{sk-1} (sk)^{2m}\frac{ n^{2sk-m} }{|H'|^2} \EE^2[X] p^{-\tilde{e}(m)} \\
		&\le 4 (sk)^{2sk-2} \eta^{-2} \sum_{m=2}^{sk-1} \EE^2[X] p^{-\tilde{e}(m)} n^{-m} \le 4(sk)^{2sk-1} \eta^{-2} C^{-1} \EE^2[X] n^{-1} \le \tfrac 18 s^{-1} \EE^2[X] n^{-1} ,
	\end{align*}
	where in the first inequality we used that there are at most $m! \binom{sk}{m}^2 n^{2sk-m}$ choices for $H_i$ and $H_j$ intersecting in $m$ vertices, in the second we used $\EE[X]=|H'|p^{2s(k-1)-1} \ge \sqrt{C} n$, and in the third we used $n^{sk}/|H'| \le 2/\eta$.
	Then with Lemma~\ref{lem:janson} applied with $\gamma=2^{-1/2}$, we get that the probability that none of the graphs of the family $\{ H_i \}_{i \in \left[ |H'| \right]}$ appears in $G(n,p)$ is bounded from above by 
	\begin{align*}
		\exp \left(- \frac{\EE^2[X]}{4(\EE[X]+\Delta[X])} \right) & \le \exp \left(- \tfrac 18 \min\{ \EE[X] , \EE^2[X]/\Delta[X] \} \right) \\
		& \le \exp \left( - \tfrac 18 \min\{ \sqrt{C} n , 8s n \} \right)  \le \exp(-sn)\, .
	\end{align*}
	We can conclude with a union bound over the at most $2^{sn}$ choices for the $s$ subsets $W_i$ with $i=1,\dots,s$ that the lemma holds.
\end{proof}

\begin{proof}[Proof of Lemma~\ref{lem:sublinear_square_paths}]
	Let $k \ge 2$ and $t \ge 1$ be integers.
	For $k=2$ the result follows from the proof of~\cite[Theorem~2.4]{triangle_paper} with slight modifications, so we can assume $k \ge 3$.
	We let $0<\gamma< (16 k t)^{-1}$ and $C \ge 2^{k+9} kt$.
	Further let $p \ge C (\log n)^{1/(2k-3)} n^{-(k-1)/(2k-3)}$, $0 \le m \le \gamma n$, and let $G$ be an $n$-vertex graph with vertex set $V$,  minimum degree $\delta(G) \ge m$ and maximum degree $\Delta(G) \le \gamma n$.
	
	We distinguish two cases.
	If $m \le (\log n)^{2/(2k-3)} n^{(2k-4)/(2k-3)}$, we only need Janson's inequality and we will greedily find $tm+t$ copies of $P_{k+1}^2$, using only edges from the random graph $G(n,p)$.
	Let $V' \subseteq V$ be the set of vertices used in this greedy construction.
	As long as we have not found $tm+t$ copies of $P_{k+1}^2$, we have $|V'| \le (tm+t)(k+1)$ and thus $|V \setminus V'| \ge n/2$.
	We let $\{ H_i\}_{i \in \cI}$ be the family of copies of $P_{k+1}^2$ with vertices in $V \setminus V'$ and note $|\cI| \ge 2^{-k-2} n^{k+1}$.
	Then, using the notation of Lemma~\ref{lem:janson}, we observe that the expected number of these copies appearing as subgraphs of $G(n,p)$ is 
	\[\EE[X]=|\cI| p^{2k-1} \ge 2^{-k-2} n^{k+1} p^{2k-1} \ge C (\log n)^{(2k-1)/(2k-3)} n^{(2k-4)/(2k-3)} \ge 32kt m \log n \, . \]
	On the other hand, we have
	\begin{align*}
		\Delta[X] &= \sum_{H_i \sim H_j} p^{e(H_i) + e(H_j) - e(H_i \cap H_j)} \le \sum_{r=2}^{k} O(n^{2(k+1)-r} p^{2(2k-1)-(2r-3)}) \\
		& \le \EE^2[X] \sum_{r=2}^{k} O \left( p^{3-2r} n^{-r}  \right) \le  \EE^2[X] o \left( n^{-1} \right) \, ,
	\end{align*}
	where in the first inequality we split the sum according to the value of $r=v(H_i \cap H_j)$ and used that then $e(H_i \cap H_j) \le 2r-3$.
	Then with Lemma~\ref{lem:janson} we get that the probability that there is no copy of $P_{k+1}^2$ is bounded from above by $\exp(- \EE[X]/8)\le n^{-4ktm}$.
	We conclude with a union bound over the at most $\binom{n}{(k+1)(tm+t)} \le n^{3ktm}$ possible choices for $V'$ that we can a.a.s.~find $tm+t$ copies of $P^2_{k+1}$ in $G(n,p)$.
	
	For $m \ge (\log n)^{2/(2k-3)} n^{(2k-4)/(2k-3)}$ we need to use the edges of $G$.
	We will find copies of $P^2_{k+1}$, where all edges incident to one vertex come from $G$ and the remaining edges come from $G(n,p)$, where we will need to distinguish between $k=3$ and $k \ge 4$.
	First, we greedily obtain a spanning bipartite subgraph $G' \subseteq G$ of minimum degree $\delta(G') \ge m/2$ by taking a partition of $V(G)$ into sets $A$ and $B$ such that $e_G(A,B)$ is maximised and letting $G'=G[A,B]$.
	Indeed, a vertex of degree less than $m/2$ can be moved to the other class to increase $e_G(A,B)$.
	W.l.o.g.~we assume $|B|\ge n/2\ge |A|$.
	Moreover, we have $|A| \ge m/(4 \gamma)$, as otherwise with $e(A,B) \ge nm/4$ there is a vertex of degree at least $\gamma n$, a contradiction.
	
	Then we observe that given any sets $A' \subseteq A$ and $B' \subseteq B$ such that $|A'| \le m/(16 \gamma)$ and $|B'| \le n/4$, we also have $e(A \setminus A', B \setminus B')\ge nm/16$.
	Otherwise, from $e(A,B \setminus B') \ge |B \setminus B'| m/2 \ge nm/8$, it would follow that $e(A',B\setminus B') \ge nm/16$ and thus, since $|A'| \le m/(16 \gamma)$, we would have a vertex of degree at least $\gamma n$ in $A'$, a contradiction to the maximum degree of $G$.
	
	We will greedily find $tm+t$ copies of $P_{k+1}^2$ with one vertex in $A$ and $k$ vertices in $B$.
	Let $A' \subseteq A$ and $B' \subseteq B$ be the set of vertices used in this greedy construction.
	As long as we have not found $tm+t$ copies of $P_{k+1}^2$, we have $|A'| \le tm+t \le m/(16 \gamma)$ and $|B'| \le k(tm+t) \le n/4$, and thus $e(A \setminus A', B \setminus B')\ge nm/16$.
	Therefore, using that $|A|\le n/2$, there is a vertex $v \in A \setminus A'$ with degree at least $m/8$ into $B \setminus B'$.
	We let $B^*$ be a set of $m/8$ neighbours of $v$ in $B \setminus B'$.
	
	When $k=3$, we will find a path on three vertices in $B^*$ in the random graph, which will give, together with the three edges of $G$ between $v$ and those vertices, a copy of $P_4^2$.
	We argue as follows.
	If such a path does not appear, then there are less than $m$ edges of $G(n,p)$ in $B^*$.
	However the expected number of random edges within $B^*$ is at least $p\binom{m/8}{2} \ge 8kt m \log n$ and therefore, by Lemma~\ref{lem:chernoff}, with probability at least $1-n^{-4kt m}$ there are more than $m$ edges of $G(n,p)$ in $B^*$.
	We conclude by union bound over the at most
	\[ \binom{|A|}{tm+t} \binom{|B|}{k(tm+t)} \le n^{3ktm}\]
	choices for $A'$ and $B'$.
	
	For $k \ge 4$ we let $B_1, \dots, B_4$ be pairwise disjoint sets of size $m/32$ in $B^*$.
	Moreover, we let $B_5,\dots,B_k$ be pairwise disjoint sets of size $n/(4k)$ in $B \setminus B'$, each disjoint from $B_1, \dots, B_4$.
	This is possible as $|B \setminus B'| \ge n/4$.
	
	\begin{claim}
		\label{claim:almost_path}
		With probability at least $1-n^{-\omega(m)}$ there exists vertices $b_1,\dots,b_k$ with $b_i \in B_i$ for $i=1,\dots,k$ such that in $G(n,p)$ we have the edges $b_ib_{i+1}$ for $i=1,\dots,k-1$ and $b_{i}b_{i+2}$ for $i=3,\dots,k-2$.
	\end{claim}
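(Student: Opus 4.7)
The plan is to apply Janson's inequality (Lemma~\ref{lem:janson}) to the random variable $X$ counting transversal copies of the graph $H$ on vertex set $\{b_1,\dots,b_k\}$ with edge set
\[
E(H)=\{b_ib_{i+1}:1\le i\le k-1\}\cup\{b_ib_{i+2}:3\le i\le k-2\},
\]
so that $e(H)=2k-5$. Structurally, $H$ consists of a copy of $P_{k-2}^2$ on $\{b_3,\dots,b_k\}$ together with the two extra edges $b_1b_2$ and $b_2b_3$. In particular, the densest subgraph of $H$ is this embedded $P_{k-2}^2$, with $m_1(P_{k-2}^2)=(2k-7)/(k-3)$; since $(k-1)/(2k-3)<(k-3)/(2k-7)$, our hypothesis $p\ge Cn^{-(k-1)/(2k-3)}$ is comfortably above the corresponding threshold $n^{-1/m_1(H)}$.

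First I would compute $\EE[X]$ using $|B_i|\ge m/32$ for $i\le 4$ and $|B_i|\ge n/(4k)$ for $5\le i\le k$; substituting the hypothesized lower bounds on $p$ and on $m$ (we are in the regime $m\ge(\log n)^{2/(2k-3)}n^{(2k-4)/(2k-3)}$) yields
\[
\EE[X]\ge \Omega_k\!\bigl(C^{2k-5}(\log n)^{(2k+3)/(2k-3)}n^{(4k-9)/(2k-3)}\bigr).
\]
Because $(4k-9)/(2k-3)>1$ for $k\ge 4$ and $m\le\gamma n$, this is $\omega(m\log n)$ once $C$ is chosen large enough. The main work is then to bound $\Delta[X]=\sum_{H_{\mathbf{b}}\sim H_{\mathbf{b}'}}\EE[X_{\mathbf{b}}X_{\mathbf{b}'}]$, which I would split according to the intersection subgraph $F=H_{\mathbf{b}}\cap H_{\mathbf{b}'}$ together with the set $S\subseteq [k]$ of indices of its vertices. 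A routine calculation gives
\[
\frac{\Delta_F}{\EE[X]^2}\le O_k\!\left(\frac{1}{\prod_{i\in S}|B_i|\cdot p^{e(F)}}\right),
\]
so it suffices to verify $\prod_{i\in S}|B_i|\cdot p^{e(F)}\gg m\log n$ for every subgraph $F$ with $e(F)\ge 1$.

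The hard part will be this case analysis, since the $|B_i|$'s come in two rather different sizes. When $S\cap\{1,2,3,4\}\ne\emptyset$ an extra factor of at least $m/32$ provides the needed slack, and the inequality reduces to the standard density calculation for subgraphs of $P_{k-2}^2$ combined with $p\ge Cn^{-(k-1)/(2k-3)}$. When $S\subseteq\{5,\dots,k\}$, $F$ is a subgraph of $P_{k-4}^2$ with $e(F)\le 2|S|-3$, and a direct computation shows $n^{|S|}p^{e(F)}\ge n^{1+\Omega_k(1)}$, which is $\gg m\log n$ because $m\le\gamma n$. (Note the lower bound on $m$ is used only in the $\EE[X]$ computation.) Summing over the $O_k(1)$ choices of $F$ then gives $\Delta[X]=o(\EE[X]^2/(m\log n))$, so that Janson's inequality delivers $\PP[X=0]\le\exp(-\omega(m\log n))=n^{-\omega(m)}$, as required.
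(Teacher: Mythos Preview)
Your approach via Janson's inequality is correct and is exactly what the paper does; your computation of $\EE[X]$ is fine and in fact more explicit than the paper's. The issue is in your treatment of $\Delta[X]$.

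Your case $S\subseteq\{5,\dots,k\}$ is handled correctly. But your case $S\cap\{1,2,3,4\}\neq\emptyset$ is not a proof: ``an extra factor of $m/32$ provides the needed slack, and the inequality reduces to the standard density calculation for subgraphs of $P_{k-2}^2$'' does not say what calculation you mean, and your parenthetical remark that the lower bound on $m$ is used only in the $\EE[X]$ computation is actually false. For example, take $S=\{3,4\}$ with $e(F)=1$: you need $(m/32)^2 p\gg m\log n$, i.e.\ $mp\gg\log n$. This is not a consequence of $p\ge Cn^{-(k-1)/(2k-3)}$ alone; it genuinely requires the regime hypothesis $m\ge(\log n)^{2/(2k-3)}n^{(2k-4)/(2k-3)}$ (which gives $mp\ge C(\log n)^{3/(2k-3)}n^{(k-3)/(2k-3)}\to\infty$ for $k\ge4$). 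So the lower bound on $m$ is needed in the $\Delta$ analysis as well, precisely for the subgraphs supported inside $\{b_1,b_2,b_3,b_4\}$.

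The paper's way of organising this is cleaner: set $r=|S\cap\{1,2,3,4\}|$ and $s=|S\cap\{5,\dots,k\}|$, prove the uniform edge bound $e(F)\le 2s+\min\{2r-3,r-1\}$, and then check directly that $m^{-r}n^{-s}p^{-2s-\min\{2r-3,r-1\}}$ is maximised at $(r,s)=(0,2)$ with value $n^{-2}p^{-1}=o(n^{-1})$. This simultaneously handles all intersection types and makes transparent where the lower bound on $m$ enters (namely when $r\ge2$). You should either adopt this parametrisation or carry out your case $r\ge1$ with comparable care.
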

	
	Observe that, together with $v$ and the edges $vb_i$ for $i=1,\dots,4$, this gives a copy of $P_{k+1}^2$ with vertices $b_1,b_2,v,b_3,\dots,v_k$.
	As there are at most $n^{O(m)}$ choices for $A'$ and $B'$, by a union bound and Claim~\ref{claim:almost_path}, we a.a.s.~find $tm+t$ copies of $P_{k+1}^2$.
	It remains to prove the claim.
	
	\begin{claimproof}[Proof of Claim~\ref{claim:almost_path}]
		
		We denote by $\{H_i \}_{i \in \cI}$ the graphs on $k$ vertices $b_1,\dots,b_k$ with $b_i \in B_i$ for $i=1,\dots,k$ and edges $b_ib_{i+1}$ for $i=1,\dots,k-1$ and $b_{i}b_{i+2}$ for $i=3,\dots,k-2$.
		Then, using the notation of Lemma~\ref{lem:janson}, the expected number of those graphs appearing in $G(n,p)$ is 
		\begin{align*}
			\EE[X]&=|\cI| p^{2k-5} \ge \Omega( m^{4} n^{k-4} p^{2k-5})  \\
			&\ge \Omega \left(m (\log n)^{\tfrac{6}{2k-3}+\tfrac{2k-5}{2k-3}} n^{(k-4)+\tfrac{3(2k-4)}{2k-3} - \tfrac{(2k-5)(k-1)}{2k-3}} \right) = \omega(m \log n) \, ,
		\end{align*}
		where we used the bounds on the sizes of the sets $B_i$ for $i=1,\dots,k$ in the first inequality and the bounds on $m$ and $p$ in the second inequality.
		On the other hand we get 
		\begin{align*}
			\Delta[X] &= \sum_{H_i \sim H_j} p^{e(H_i) + e(H_j) - e(H_i \cap H_j)} \le \sum_{r,s} O(m^{8-r} n^{2k-8-s} p^{2(2k-5)-(2s+\min\{2r-3,r-1 \})}) \\
			&\le \EE^2[X] \sum_{r,s} O(m^{-r}n^{-s} p^{-2s-\min\{2r-3,r-1 \}}) \le \EE^2[X] O(n^{-2}p^{-1}) \le \EE^2[X] o(n^{-1}) \, ,
		\end{align*}
		where we split the sum according to the value of $r$ and $s$, with $0 \le r \le 4$, $0 \le s \le k-4$ and $2 \le r+s \le k-1$, where $r$ and $s$ are the number of common vertices of $H_i$ and $H_j$ in $B_1,\dots,B_4$ and $B_5,\dots,B_k$, respectively.
		In the first inequality we used that $e(H_i \cap H_j) \le 2s+\min\{2r-3,r-1 \}$, and in the third inequality we used that $O(m^{-r}n^{-s} p^{-2s-\min\{2r-3,r-1 \})})$ is maximised for $r=0$ and $s=2$ with the given bounds on $m$ and $p$.
		The claim follows by Lemma~\ref{lem:janson}, as in the application above.
	\end{claimproof}
	
\end{proof}

\begin{proof}[Proof of Lemma~\ref{lem:auxF}]
	To  prove~\ref{claim:min_degree}, without loss of generality, it suffices to show that the degree of every vertex in $U_1$  is at least $(1-h\eps)m^{h-1}$.
	Fix any $u_1 \in U_1$ and set $N_1=N_G(u_1, V)$. Notice that since $(V,U_1)$ is $(\eps,d)$-super-regular, we have $|N_1| \ge d |V|\ge \eps|V|$.
	Since $(V,U_2)$ is $(\eps,d)$-super-regular, there are at least $(1-\eps)m$ vertices $u_2 \in U_2$ such that the set $N_2=N_G(u_2,N_1)$ of neighbours of $u_2$ in $N_1$ has size at least $(d-\eps)|N_1|\ge (d-\eps)d|V| \ge\eps |V|$.
	Continuing in the same way, by applying Lemma~\ref{lem:MDL} to the $(\eps,d)$-super-regular pair $(V,U_j)$ for $j=3,\dots,h$, we get that there are at least $((1-\eps)m)^{j-1}$ choices of $(u_2,\dots,u_j) \in U_2 \times \dots \times U_j$ such that the vertices $u_1,u_2,\dots,u_j$ have at least $(d-\eps)^{j-1}|N_1|\ge (d-\eps)^{j-1}d|V|\ge (d-\eps)^{h-1}d|V| \ge\eps |V|$ common neighbours in the set $V$.
	Since $(d-\eps)^{h-1}|N_1| \ge \tfrac12 d^{h}n$ and $((1-\eps)m)^{h-1} \ge (1-h\eps)m^{h-1}$, the first part of the lemma follows.
	
	Without loss of generality, it suffices to prove~\ref{claim:degree_god} for $U_1$.
	If $|X| \ge 2 \eps nd^{1-h}$, then, by applying Lemma~\ref{lem:MDL}, for all but at most $\eps m$ vertices $u_1 \in U_1$, the set $N_1=N(u_1,X)$ of neighbours of $u_1$ in $X$ is of size at least $(d-\eps)|X|$.
	Fix any such $u_1$ and proceed in the same way as in the proof of~\ref{claim:min_degree}.
	We get that there are at least $((1-\eps)m)^{h-1} \ge (1-h\eps)m^{h-1}$ choices of $(u_2,\dots,u_h) \in U_2 \times \dots \times U_h$ such that the vertices $u_1,u_2,\dots,u_h$ have at least $(d-\eps)^{h-1}|N_1| \ge (d-\eps)^h |X| \ge \tfrac12 d^h|X|$ common neighbours in the set $X$, and the second part of the lemma follows.
\end{proof}

\begin{proof}[Proof of Lemma~\ref{lem:H_copies}]
	Given any graph $H$ on $h \ge 2$ vertices and any $\delta>0$, we fix $\eps>0$ with $\eps < 2^{-4h-24} h^{-8} \delta^{4h}$, $\delta'=2^{-3} h^{-1} \eps^{1/4}$ and $C$ large enough for the inequalities indicated below to hold.
	Observe that the maximum degree of $F$ is $m^{h-1}$ and, by Lemma~\ref{lem:auxF}\ref{claim:min_degree}, the minimum degree of $F$ is at least $(1-h\eps)m^{h-1}$.
	Therefore
	\[\EE[e(\tilde F)] = e(F) p^{e(H)} = (1 \pm h\eps) m^h p^{e(H)} \]
	and
	\begin{align*} 
		\Var[e(\tilde F)] &= O_{h,\delta} \left( \sum_{H' \subseteq H, e(H')>0} m^{2v(H) - v(H')} \left( p^{2e(H)-e(H')} - p^{2e(H)}  \right) \right) \\
		& = O_{h,\delta} \left( \EE[e(\tilde F)]^2 \sum_{H' \subseteq H, e(H')>0}  m^{-v(H')} p^{-e(H')} \right) \\
		&=  O_{h,\delta} \left( \EE[e(\tilde F)]^2 \sum_{H' \subseteq H, e(H')>0}  n^{-v(H')} p^{-e(H')} \right) = O_{h,\delta} \left( \EE[e(\tilde F)]^2 C^{-1} n^{-1} \right)\, ,
	\end{align*}
	where we used that $n^{-v(H')}p^{-e(H')} \le C^{-e(H')} n^{-v(H')+e(H')/m_1(H)} \le C^{-e(H')} n^{-1}$ in the last step.
	Using Chebyshev's inequality, we have
	\begin{align*}
		\PP\big[ e(\tilde F) \not= (1 \pm \eps) \EE[e(\tilde F)]\big] = O_{h,\delta,\eps} \left( \frac{\Var[e(\tilde F)]}{\EE[e(\tilde F)]^2}  \right)
		= O_{h,\delta,\eps}(C^{-1} n^{-1})  \, , 
	\end{align*}
	and thus a.a.s.
	\begin{equation}
		\label{eq_e(F)}
		e(\tilde F) = (1 \pm \eps) \EE[e(\tilde F)] = (1 \pm \eps) (1 \pm h\eps) m^h p^{e(H)}.
	\end{equation}
	Similarly as above, given $U_i' \subseteq U_i'$ of size at least $\delta m$ for $i=1,\dots,h$, we have
	\[\EE[e(\tilde F')] =e(F') p^{e(H)} = (1 \pm h\tfrac{\eps}{\delta}) \prod_{i=1}^h |U_i'| p^{e(H)} = \Omega_{h,\delta,\eps}(n^h p^{e(H)}) = \Omega_{h,\delta,\eps}(C n) \]
	and $\Delta[e(\tilde F')] = O_{h,\delta,\eps}(\EE[e(\tilde F')]^2 C^{-1} n^{-1})$.
	Then with Lemma~\ref{lem:janson} we have
	\begin{align*}
		\PP\big[e(\tilde F') < (1-\eps) \EE[e(\tilde{F'})]\big] \le
		\exp\left(- \frac{\eps^2 \EE[e(\tilde F')]^2}{2\Delta[e(\tilde F')]+2\EE[e(\tilde F')]}\right) \le \exp (-h n)\, ,
	\end{align*}
	where the last inequality holds for large enough $C$.
	We conclude with a union bound that a.a.s.
	\begin{equation}
		\label{eq_e(tildeF)}
		e(\tilde F') \ge  (1-\eps) \EE[e(\tilde{F'})] \ge (1-\eps) (1 - h\tfrac{\eps}{\delta}) \prod_{i=1}^h |U_i'| p^{e(H)} \ge (1-\sqrt{\eps}) \prod_{i=1}^h |U_i'| p^{e(H)}
	\end{equation}
	for all choices of $U_i' \subseteq U_i'$ of size at least $\delta m$ for $i=1,\dots,h$ and using the choice of $\eps$. 
	This proves the lower bound of~\eqref{eq:copies_F}.
	Note that~\eqref{eq_e(F)} and~\eqref{eq_e(tildeF)} hold also with $\delta$ replaced by $\delta'$.
	
	Next we upper bound $e(\tilde F')$ by taking $e(\tilde F)$ and subtracting those edges of $\tilde F$ that are not in $\tilde F'$, i.e.~the edges that contain at least one vertex $v_i$ that belongs to $U \setminus U_i'$.
	We only need a lower bound on their number and we will see that it is enough to lower bound those for which $|U_i \setminus U_i'| \ge \delta' m$ (which can be done using~\eqref{eq_e(tildeF)}), and simply ignore the others.
	For this we let $J \subseteq [h]$ be the set of those indices $j \in [h]$ such that $|U_j'| \le (1-\delta')m$, and for any $\emptyset \neq I \subseteq J$ we let $F_I$ be the subgraph of $F$ induced by the sets $U_i \setminus U_i'$ for $i \in I$ and $U_i'$ for $i \not\in I$.
	If $J = \emptyset$, then the inequality $e(\tilde F') \le e(\tilde F)$ already gives the desired upper bound on $e(\tilde F')$.
	Otherwise, using~\eqref{eq_e(F)} and~\eqref{eq_e(tildeF)}, we get
	\[e(\tilde F') \le e(\tilde F) - \sum_{\emptyset \neq I \subseteq J} e(\tilde F_{I})
	\le (1+\eps) \EE[e(\tilde{F})] - \sum_{\emptyset \neq I \subseteq J} (1-\eps) \EE[e(\tilde{F_I})] \,, \]
	that we can further upper bound by
	\begin{align*} 	
		(1+\eps) (1 + h\eps) \prod_{i=1}^h |U_i| p^{e(H)} - \sum_{\emptyset \neq I \subseteq J} \left[ (1-\eps) (1 - h\tfrac{\eps}{\delta'}) \prod_{i \in I} |U_i \setminus U_i'| \prod_{i \not\in I} |U_i'| p^{e(H)} \right] \\
		\le \prod_{j \not\in J} |U_j| \prod_{j \in J} |U_j'| p^{e(H)} + 2 h \eps \prod_{i=1}^h |U_i|  p^{e(H)} + 2 h \frac{\eps}{\delta'} \sum_{\emptyset \neq I \subseteq J} \left( \prod_{i \in I} |U_i \setminus U_i'| \prod_{i \not\in I} |U_i'| p^{e(H)} \right) \\
		\le  (1-\delta')^{|J|-h} \prod_{i=1}^h |U_i'| p^{e(H)} + 2 h \eps \delta^{-h} \prod_{i=1}^h |U_i'|  p^{e(H)} + 2 h \frac{\eps}{\delta'} 2^{|J|} \delta^{-|J|}  \prod_{i=1}^h |U_i'| p^{e(H)} \\
		\le \left(1+ 2 h \delta' + 2 h \eps \delta^{-h} + 2^{h+1} h \delta^{-h} \frac{\eps}{\delta'} \right) \prod_{i=1}^h |U_i'| p^{e(H)} \le (1+\sqrt{\eps} ) \prod_{i=1}^h |U_i'| p^{e(H)} \, .
	\end{align*}
	To get the second line, we used $(1+\eps)(1+h\eps) \le 1+2h\eps$, $(1-\eps)(1-h\tfrac{\eps}{\delta'}) \ge 1-2h\tfrac{\eps}{\delta'}$ and 
	\[ \prod_{i=1}^h |U_i| - \sum_{\emptyset \neq I \subseteq J} \prod_{i \in I} |U_i \setminus U_i'| \prod_{i \not\in I} |U_i'|= \prod_{j \not\in J} |U_j| \prod_{j \in J} |U_j'|\]
	as we are left with those edges that have vertices in $U_j \setminus U_j'$ for $j \not\in J$ and in $U_j'$ for $j \in J$.
	To get to the third line, we used that for $j \not\in J$ we have $|U_j'| \ge (1-\delta')m=(1-\delta')|U_j|$ and that for each $i$ we have $|U_i \setminus U_i'| \le |U_i| \le \delta^{-1} |U_i'|$.
	In the last estimate we use the bound on $\eps$ and the choice of $\delta'$.
	This finishes the proof of~\eqref{eq:copies_F}.
	
	For~\eqref{eq:copies_F_good}, we repeat essentially the same argument we used for the lower bound of~\eqref{eq:copies_F}.
	Observe that from~\ref{claim:degree_god} of Lemma~\ref{lem:auxF}, if $|X| \ge 2\eps n d^{1-h}$, all but at most $\eps m$ vertices from each $U_i$ have degree at least $(1-h\eps)m^{h-1}$ in $F_X$.		
	Therefore
	\begin{equation}
		\label{eq_e(F)good}
		\EE[e(\tilde F_X')] =e(F_X') p^{e(H)} = (1 \pm h\tfrac{\eps}{\delta}) \prod_{i=1}^h |U_i'| p^{e(H)} = \Omega_{h,\delta,\eps}(Cn)
	\end{equation}	
	and again $\Delta[e(\tilde F_X')] = O_{h,\eps,\delta}(\EE[e(\tilde F_X')]^2 C^{-1} n^{-1})$.
	Then with Lemma~\ref{lem:janson} we have
	\begin{align*}
		\PP[e(\tilde F_X') < (1-\eps) \EE[e(\tilde F_X')]  ] \le 
		\exp\left(- \frac{\eps^2 \EE[e(\tilde F_X')]^2}{2\Delta[e(\tilde F_X')]+2\EE[e(\tilde F_X')]}\right) \le \exp (-h n)\, ,
	\end{align*}
	where the last inequality holds for large enough $C$.
	Then with a union bound over all choices of $U_1',\dots, U_h'$ we conclude that $e(\tilde F_X') \ge  (1-\eps) \EE[e(\tilde F_X')]$ with probability $1-e^{-n}$ and, using~\eqref{eq_e(F)good}, we finish the proof.
\end{proof}

\end{document}